\newcommand{\asymptequiv}{\sim}
\newcommand{\F}{\mathbb F}
\newcommand{\Ham}{\mathbb H}
\newcommand{\R}{\mathbb R}
\newcommand{\N}{\mathbb N}
\newcommand{\C}{\mathbb C}
\newcommand{\E}{\mathbb E}
\newcommand{\Pro}{\mathbb P}
\newcommand{\vol}{\mathrm{vol}}
\newcommand{\Mat}{\mathrm{Mat}}
\def\dint{\textup{d}}
\newcommand{\SSS}{\ensuremath{{\mathbb S}}}
\newcommand{\B}{\ensuremath{{\mathbb B}}}
\newcommand{\D}{\ensuremath{{\mathbb D}}}
\newcommand{\eps}{\varepsilon}
\DeclareMathOperator{\sgn}{sgn}
\DeclareMathOperator{\Tr}{Tr}
\renewcommand{\Re}{\operatorname{Re}}  
\newtheorem{thm}{Theorem}[section]
\newtheorem{cor}[thm]{Corollary}
\newtheorem{lemma}[thm]{Lemma}
\theoremstyle{definition}
\newtheorem{rmk}[thm]{Remark}
\begin{document}


\title[]{Intersection of unit balls in \\classical matrix ensembles}

\author[Z. Kabluchko]{Zakhar Kabluchko}
\address{Zakhar Kabluchko: Institut f\"ur Mathematische Stochastik, Westf\"alische Wilhelms-Uni\-ver\-sit\"at M\"unster, Germany}
\email{zakhar.kabluchko@uni-muenster.de}

\author[J. Prochno]{Joscha Prochno}
\address{Joscha Prochno: School of Mathematics \& Physical Sciences, University of Hull, United Kingdom} \email{j.prochno@hull.ac.uk}

\author[C. Th\"ale]{Christoph Th\"ale}
\address{Christoph Th\"ale: Fakult\"at f\"ur Mathematik, Ruhr-Universit\"at Bochum, Germany} \email{christoph.thaele@rub.de}

\keywords{Asymptotic geometric analysis, eigenvalues, Gaussian ensembles, high dimensional convexity, intersections, logarithmic potential theory, matrix unit balls, random matrix theory, weak laws of large numbers}
\subjclass[2010]{Primary: 52A23, 60B20, 60F05 Secondary: 46B07, 47B10, 52A21}



\begin{abstract}
We study the volume of the intersection of two unit balls from one of the classical matrix ensembles GOE, GUE and GSE, as the dimension tends to infinity. This can be regarded as a matrix analogue of a result of Schechtman and Schmuckenschl\"ager for classical $\ell_p$-balls [Schechtman and Schmuckenschl\"ager, GAFA Lecture Notes, 1991]. The proof of our result is based on two ingredients, which are of independent interest. The first one is a weak law of large numbers for a point chosen uniformly at random in the unit ball of such a matrix ensemble. The second one is an explicit computation of the asymptotic volume of such matrix unit balls, which in turn is based on the theory of logarithmic potentials with external fields.
\end{abstract}

\maketitle

\tableofcontents

\section{Introduction and main result}

To understand the geometry of high-dimensional convex bodies and, in particular, the distribution of volume is one of the central aspects considered in Asymptotic Geometric Analysis. It has been realized by now that such an understanding has important connections and implications to various questions considered in other branches of mathematics and related disciplines. We refer the reader to the research monographs and surveys \cite{AsymptoticGeometricAnalysisBookPart1, IsotropicConvexBodies,G2014b, G2014a} for background information.

Ever since, there has been a particular interest and focus on the non-commutative setting of Schatten trace classes or classical matrix ensembles as is demonstrated by the research carried out in \cite{CK2015, GP2007, HPV17, KMP1998, RV2016, ST1980}. While this often underlines a similarity to the commutative setting of classical $\ell_p$ sequence spaces, it also shows differences in the behavior of certain quantities related to the geometry of Banach spaces. In fact, often different methods and tools are needed and proofs can be considerably more involved.

In the classical setting of $\ell_p^n$-balls, Schechtman and Zinn \cite{SchechtmanZinn} considered the question of what proportion of volume is left in a volume-normalized $\ell^n_p$-ball after removing a $t$-multiple of a volume-normalized $\ell_q^n$-ball. In the case $p=1$ and $q=2$ this question was raised by V.D. Milman.
In a paper subsequent to \cite{SchechtmanZinn}, Schechtman and Schmuckenschl\"ager \cite{SchechtmanSchmuckenschlaeger} investigated the \emph{asymptotic} behavior of the volume of such intersections. More precisely, if we denote by $\D_p^n$ the $\ell_p^n$-ball of unit volume, then, using a law of large numbers, Schechtman and Schmuckenschl\"ager proved that if $1\leq p,q\leq\infty$ are such that $q\neq p$ and $q<\infty$, then
\begin{align}\label{eq:SchechtmanSchmuckenschlaeger}
\vol_n\big( \D_p^n \cap t\D_q^n\big)
\stackrel{n\to\infty}{\longrightarrow}
\begin{cases}
0 &: tA_{p,q}<1\\
1 &: tA_{p,q}>1
\end{cases}
\end{align}
for all $t\geq 0$. Here, the constant $A_{p,q}$ is given as follows:
$$
A_{p,q} =  \begin{cases}
\frac {\Gamma(1+{1\over p})^{1+{1/q}}}{\Gamma(1+{1\over q})\Gamma({q+1\over p})^{1/q}}\,e^{{1/p}-{1/q}}\,\big({p\over q}\big)^{1/q} &: p<\infty\\
\Gamma(1+{1\over q})^{-1} \big(\frac {q+1}{qe}\big)^{1/q} &: p=\infty.
\end{cases}
$$
The critical case where $t A_{p,q} = 1$ has later been handled by Schmuckenschl\"ager \cite{schmuckenschlaeger_pams,SchmuckenschlaegerCLT} using a central limit theorem. We also refer to \cite{KPT17CCM} for multivariate analogues and some new developments in this direction.

The purpose of the present paper is to establish a non-commutative analogue to \eqref{eq:SchechtmanSchmuckenschlaeger} for the unit balls of different classical matrix ensembles. More precisely, we let $\beta\in\{1,2,4\}$ and consider the collection $\mathscr H_n(\mathbb{F}_\beta)$ of all self-adjoint $n\times n$ matrices with entries from the skew field $\mathbb{F}_\beta$, where $\mathbb{F}_1=\R$, $\mathbb{F}_2=\C$ or $\mathbb{F}_4=\Ham$, the set of Hamiltonian quaternions, see Section \ref{sec:Preliminaries} for more details. The standard Gaussian distribution on $\mathscr H_n(\mathbb{F}_\beta)$ is known as the Gaussian orthogonal ensemble (GOE) if $\beta=1$, the Gaussian unitary ensemble (GUE) if $\beta=2$, or the Gaussian symplectic ensemble if $\beta=4$.

By $\lambda_1(A),\ldots,\lambda_n(A)$ we denote the (real) eigenvalues of a matrix $A$ from $\mathscr H_n(\mathbb{F}_\beta)$ and consider the following matrix analogues of the classical $\ell_p^n$-balls discussed above:
$$
\B_{p,\beta}^n := \Big\{A\in \mathscr H_n(\mathbb{F}_\beta):\sum_{j=1}^n|\lambda_j(A)|^p \leq 1\Big\},\qquad \beta \in\{1,2,4 \}\quad\text{and}\quad  0 < p \leq \infty,
$$
where we interpret the defining condition in brackets as $\max\{|\lambda_j(A)|:j=1,\ldots,n\} \leq 1$ if $p=\infty$. As in the case of the classical $\ell_p^n$-balls, we denote by $\D_{p,\beta}^n$, $\beta\in\{1,2,4\}$ the volume normalized versions of these matrix unit balls (the volume in $\mathscr H_n(\mathbb F_\beta)$ will formally be introduced in Section \ref{sec:Preliminaries} below). In this paper we prove the following matrix analogue to \eqref{eq:SchechtmanSchmuckenschlaeger}.

\begin{thm}\label{thm:ApplInto}
Let $0 <  p, q <\infty$ with $p\neq q$ and $\beta\in\{1,2,4\}$. Then, for $t>0$,
\[
\vol(\D^n_{p,\beta}\cap t\, \D^n_{q,\beta}) \stackrel{n\to\infty}{\longrightarrow}
\begin{cases}
0 &: t < e^{\frac{1}{2p} - \frac{1}{2q}} \big(\frac{2p}{p+q}\big)^{1/q}\\
1 &: t > e^{\frac{1}{2p} - \frac{1}{2q}} \big(\frac{2p}{p+q}\big)^{1/q}\,.
\end{cases}
\]
\end{thm}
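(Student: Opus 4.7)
The plan is to translate the volume question into a probabilistic statement about the eigenvalues of a uniformly random matrix in $\B^n_{p,\beta}$, and then invoke the two ingredients announced in the abstract: a weak law of large numbers for such a random matrix, and the sharp asymptotic of $\vol(\B^n_{p,\beta})$ coming from logarithmic potential theory with external field. Writing $N:=\dim_\R\mathscr H_n(\mathbb F_\beta)$ and $c_{p,\beta}:=\vol(\B^n_{p,\beta})^{-1/N}$, we have $\D^n_{p,\beta}=c_{p,\beta}\,\B^n_{p,\beta}$, and if $Z$ is uniformly distributed in $\B^n_{p,\beta}$, then $c_{p,\beta}Z$ is uniform in $\D^n_{p,\beta}$, so that
\[
\vol\bigl(\D^n_{p,\beta}\cap t\,\D^n_{q,\beta}\bigr)
 \;=\; \Pro\bigl[\,c_{p,\beta}Z\in t\,\D^n_{q,\beta}\,\bigr]
 \;=\; \Pro\Bigl[\,\sum_{j=1}^n|\lambda_j(Z)|^q\le (tc_{q,\beta}/c_{p,\beta})^q\,\Bigr].
\]

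Since the eigenvalues of a uniform $Z\in\B^n_{p,\beta}$ are typically of order $n^{-1/p}$, I would rescale by $n^{1/p}$. The first ingredient then furnishes a weak law
\[
n^{q/p-1}\sum_{j=1}^n|\lambda_j(Z)|^q \;\xrightarrow{\,\Pro\,}\; m_q(\mu_p)\;:=\;\int_\R|x|^q\,d\mu_p(x),
\]
where $\mu_p$ is the unique maximizer of the logarithmic energy $\iint\log|x-y|\,d\mu(x)d\mu(y)$ among probability measures on $\R$ satisfying $\int|x|^p\,d\mu\le1$. Because the parameter $\beta$ enters the joint eigenvalue density only as a multiplicative constant in front of the Coulomb energy, it drops out of this variational problem, which is why $\mu_p$ is $\beta$-independent. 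The second ingredient provides an asymptotic of the form $n^{1/p}\vol(\B^n_{p,\beta})^{1/N}\to v_{p,\beta}$, with $v_{p,\beta}$ expressible in terms of $I(\mu_p):=\iint\log|x-y|\,d\mu_p(x)d\mu_p(y)$ and the Lagrange multiplier of the constrained extremal problem. Combining these, $(tc_{q,\beta}/c_{p,\beta})^q\sim t^q\,n^{1-q/p}(v_{p,\beta}/v_{q,\beta})^q$, and dividing the inequality inside the probability by $n^{1-q/p}$ the critical threshold emerges as
\[
t^\ast \;=\; m_q(\mu_p)^{1/q}\,\frac{v_{q,\beta}}{v_{p,\beta}};
\]
for $t<t^\ast$ the weak law sends the probability to $0$, and for $t>t^\ast$ it sends it to $1$.

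The main obstacle is therefore purely analytic: showing that $t^\ast$ simplifies to $e^{\frac{1}{2p}-\frac{1}{2q}}(2p/(p+q))^{1/q}$. This requires the explicit support and density of the Ullman-type measure $\mu_p$ — the equilibrium measure for the external field $|x|^p$, governed by the Mhaskar–Rakhmanov–Saff theory with the mass constraint saturated — together with its $q$-th absolute moment and the sharp constant $v_{p,\beta}$ provided by the volume computation. All $\beta$-dependence must cancel, matching the $\beta$-free statement of the theorem, and the remaining product of $\Gamma$-factors, exponentials and powers should collapse upon simplification to the announced critical value.
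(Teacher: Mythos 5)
Your proposal follows essentially the same route as the paper's proof of Theorem~\ref{thm:main_repeat}: reduce the volume of the intersection to a probability statement about $\sum_j|\lambda_j(Z)|^q$ for $Z$ uniform in $\B^n_{p,\beta}$, invoke the weak law of large numbers for the empirical eigenvalue distribution (with the rescaled Ullman/equilibrium measure as limit), and combine with the explicit volume asymptotics of $\B^n_{p,\beta}$ to locate the threshold $t^\ast=m_q(\mu_p)^{1/q}\,v_{q,\beta}/v_{p,\beta}=C_{p,q}\,a_{p,q}$. One small slip: the normalization you write, $n^{1/p}\vol(\B^n_{p,\beta})^{1/N}\to v_{p,\beta}$, is off by a factor $n^{1/2}$ --- Theorem~\ref{thm:VolumeAsymptotics} gives $\vol(\B^n_{p,\beta})^{2/(\beta n^2)}\sim n^{-(1/p+1/2)}\cdot\text{const}$ --- but since only the ratio $v_{q,\beta}/v_{p,\beta}$ enters, this extra $n^{1/2}$ cancels and does not affect the argument. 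You also stop short of the explicit simplification of $t^\ast$ to $e^{\frac{1}{2p}-\frac{1}{2q}}\big(\tfrac{2p}{p+q}\big)^{1/q}$, which the paper obtains by plugging in the Ullman moment formulas~\eqref{eq:expectation ullman power p}, \eqref{eq:expectation ullman power p_general} and the explicit value of $\Delta(p)$ from Theorem~\ref{lem:limit delta_n(p)}.
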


Let us briefly comment on the similarities and differences between \eqref{eq:SchechtmanSchmuckenschlaeger} and the result of Theorem \ref{thm:ApplInto}. While the structural statements are the same, the thresholds are significantly different. The fact that the constant in Theorem \ref{thm:ApplInto} is considerably more simple than $A_{p,q}$ in \eqref{eq:SchechtmanSchmuckenschlaeger} can roughly be explained as follows: to quantify whether a point from $\D_{p,\beta}^n$ also belongs to $t\D_{q,\beta}^n$ finally boils down to a moment comparison of a so-called Ullman random variable (and a different random element in case of the classical $\ell_p^n$-balls). While in the classical case, this ratio essentially corresponds to $A_{p,q}$, in the matrix set-up this expression simplifies considerably, since the terms involving gamma functions finally cancel out as they do not depend on $p$ and $q$ simultaneously, in contrast to the classical set-up.

Let us emphasize that the proof of Theorem \ref{thm:ApplInto}  is considerably more involved than its $\ell_p^n$-ball counterpart in \cite{SchechtmanSchmuckenschlaeger}. It is essentially based on two results that are of independent interest. The first result is a precise description of the asymptotic volume of the matrix balls $\B_{p,\beta}^n$, as $n\to\infty$. While such a result is known up to a \emph{non-explicit} constant from the work of Saint Raymond~\cite{R1984} for the unit balls of Schatten classes, where the matrices are not self-adjoint and the eigenvalues are replaced by the singular values, this seems (surprisingly) not to be case for the matrix balls $\B_{p,\beta}^n$. We refer to the discussion in \cite{GP2007} where also asymptotic lower and upper bounds for the volume of $\B_{p,\beta}^n$ have been derived with non-explicit constants. However, we emphasize that for our purposes the explicit asymptotic constants are in fact needed. The proof of an explicit asymptotic formula for the volume of $\B_{p,\beta}^n$, namely
$$
\lim_{n\to \infty}
n^{{1\over p}+{1\over 2}}
\vol (\B_{p,\beta}^n)^{2/(\beta n^2)}
=
\bigg(\frac{p\sqrt{\pi}\,\Gamma(\frac{p}{2})}{\sqrt{e}\,\Gamma(\frac{p+1}{2})}\bigg)^{\frac{1}{p}}
\left({\pi\over \beta}\right)^{\frac{1}{2}}e^{\frac{3}{4}},
$$
is the content of Section \ref{sec:MatrixBallsVolume}. In this context we would like to emphasize that already the asymptotic volume formula for the unit balls of Schatten classes in \cite{R1984} contained a certain non-explicit factor whose analogue in our set-up is denoted by $\Delta(p)$. While only lower and upper bounds for the factor appearing in~\cite{R1984} are known, we shall provide an explicit formula for $\Delta(p)$. For the proof we deploy results from the theory of logarithmic potentials with external fields. In this spirit our analysis sharpens the result in \cite{R1984} and at the same time we are aiming to make more transparent the proof and its essential elements. We shall handle in detail the case of unit balls in Schatten $p$-classes in a parallel paper. The second ingredient of the proof of Theorem \ref{thm:ApplInto} is a weak law of large numbers for the eigenvalues of a matrix uniformly distributed in $\B_{p,\beta}^n$. This result in turn will be a consequence of a Schechtman-Zinn-type probabilistic representation of the volume measure on $\B_{p,\beta}^n$, which we derive from the classical polar integration formula for the cone measure. On the other hand, it is based on a limit theorem from random matrix theory about the empirical eigenvalue distribution of general $\beta$-ensembles.

\medspace

The remaining parts of the paper are organized as follows. In Section \ref{sec:Preliminaries} we present the notation and introduce the tools required to prove our main results. Since some key elements of the proofs are not common in the theory of asymptotic geometric analysis so far, we shall introduce them in slightly more detail. The computation of the asymptotic volume of unit balls in classical matrix ensembles is treated in Section \ref{sec:MatrixBallsVolume}. The probabilistic elements, in particular the weak law of large numbers for the eigenvalues of a matrix chosen uniformly at random from $\B_{p,\beta}^n$, are part of Section \ref{sec:sampling and weak law}. In the final Section \ref{sec:Application}, we shall address the question of what proportion of volume is left in a volume-normalized ball $\B_{p,\beta}^n$ after removing a $t$-multiple of a volume-normalized ball $\B_{q,\beta}^n$.

\section{Preliminaries}\label{sec:Preliminaries}

\subsection{Some general notation}

We let $\R$ be the set of real numbers, $\C$ be the set of complex numbers with standard basis $\{1,\mathfrak{i}\}$ and further $\Ham$ the set of Hamiltonian quaternions with standard basis denoted by $\{1,\mathfrak{i},\mathfrak{j},\mathfrak{k}\}$. We define for $\beta\in\{1,2,4\}$ the (skew) field
$$
\F_\beta := \begin{cases}
\R &: \beta =1\\
\C &: \beta =2\\
\Ham &:\beta = 4,
\end{cases}
$$
and notice that $\beta$ is the dimension of $\F_\beta$ over $\R$.

By $\R^n$ we denote the $n$-dimensional Euclidean space and write $\langle\, \cdot\,,\,\cdot\,\rangle$ for its standard inner product. For a topological space $E$ we shall write $\mathscr{B}(E)$ for the Borel $\sigma$-field on $E$. The $n$-volume (i.e.,\ $n$-dimensional Lebesgue measure) of a Borel set $A\in\mathscr{B}(\R^n)$ will be denoted by $\vol_n(A)$. For $0< p \leq \infty$ and $x=(x_1,\ldots,x_n)\in\R^n$ we let $\|x\|_p$ be the $p$-norm of $x$ (which, in fact, is only a quasi-norm if $p<1$) given by
$$
\|x\|_p := \begin{cases}
\Big(\sum\limits_{i=1}^n|x_i|^p\Big)^{1/p} &: p<\infty\\
\max\{|x_1|,\ldots,|x_n|\} &: p=\infty.
\end{cases}
$$
We write $\B_p^n:=\{x\in\R^n:\|x\|_p\leq 1\}$ and $\SSS_p^{n-1}:=\{x\in\R^n:\|x\|_p=1\}$ for the unit ball and the unit sphere with respect to the $p$-norm in $\R^n$.

The cone probability measure $\mu_{\B_p^n}$ on $\SSS_p^{n-1}$ is defined as
$$
\mu_{\B_p^n}(A) := {1\over\vol_n(\B_p^n)}\vol_n\big(\{rx:r\in[0,1],x\in A\}\big),\qquad A\in\mathscr{B}(\SSS_p^{n-1}).
$$
We remark that $\mu_{\B_p^n}$ coincides with the corresponding normalized Hausdorff measure on $\SSS_p^{n-1}$ if and only if $p\in\{1,2,\infty\}$ (see, e.g., \cite{NaorTAMS}). The cone measure may alternatively be defined as the (unique) measure satisfying the polar integration formula
	\begin{align}\label{eq:polar integration}
	\int_{\R^n}f(x)\,\dint x = n\,\vol_n(\B_p^n)\int_0^\infty\int_{\SSS_p^{n-1}}f(ry)\,r^{n-1}\,\mu_{\B_p^n}(\dint y)\,\dint r
	\end{align}
for all non-negative and Borel measurable functions $f:\R^n\to\R$ (see, e.g., \cite[Proposition 1]{NR2003}). We remark that a similar formula holds for general star-shaped bodies in $\R^n$.

We shall denote by $\mathfrak{S}(n)$ the group of permutations on the set $\{1,\ldots,n\}$. If a constant depends on a parameter such as $\beta$ and/or $p$ we shall indicate this by lower indices, i.e., by writing $C_{\beta}$ or $C_{\beta,p}$. Finally, we frequently use for sequences $(a_n)_{n\in\N}$ and $(b_n)_{n\in\N}$ the asymptotic notation $a_n \asymptequiv b_n$ to indicate that $\frac{a_n}{b_n}\to 1$, as $n\to\infty$.

\subsection{Random measures}

Let $S$ be a Polish space and $(\Omega,\mathcal{A},\Pro)$ be a probability space, which we implicitly assume to be rich enough to carry all the random elements we consider. A measure $\mu$ on $S$ is said to be locally finite if $\mu(B)<\infty$ for all bounded Borel sets $B\in\mathscr{B}(S)$. We denote by $\mathcal{M}_S$ the space of locally finite measures on $S$ and supply $\mathcal{M}_S$ with the $\sigma$-field $\mathscr{B}(\mathcal{M}_S)$ generated by the evaluation mappings $e_B:\mu\mapsto\mu(B)$, where $\mu\in\mathcal{M}_S$ and $B\in\mathscr{B}(S)$, i.e., $\mathscr{B}(\mathcal{M}_S)$ is the smallest $\sigma$-field for which all the mappings $e_B$ become measurable.
We remark that with the vague topology, i.e., the topology generated by the mappings
\[
\mathcal{M}_S\to\R,\qquad \mu\mapsto \int_S f \,\dint \mu
\]
with $f$ being some continuous and compactly supported function on $S$, the space $\mathcal{M}_S$ is Polish and $\mathscr{B}(\mathcal{M}_S)$ is its associated Borel $\sigma$-field, see \cite[Theorem 4.2]{KallenbergRM}. By a random measure $\xi$ on $S$ we understand a random element in the measurable space $\mathcal{M}_S$, i.e., a measurable mapping $\xi:(\Omega,\mathcal A)\to (\mathcal M_S,\mathscr B(\mathcal M_S))$.

Let $\xi_1,\xi_2,\ldots$ be a sequence of random measures and $\xi$ be another random measure. We say that $(\xi_n)_n$ converges weakly almost surely (or weakly with probability one) to $\xi$, provided that there exists $\Omega_0\subset\Omega$ with $\Pro(\Omega_0)=1$ such that the weak convergence $\xi_n(\omega)\overset{w}{\longrightarrow}\xi(\omega)$ holds for all $\omega\in\Omega_0$.

For further background material on random measure theory we refer to the monograph \cite{KallenbergRM}.

\subsection{Convergence results from probability theory}

In our arguments below we need a couple of convergence results from probability theory. They are well known, but having a broad readership in mind we also include their short proofs. The first lemma connects convergence in distribution with convergence in probability of a sequence of random variables. For random variables $X,X_1,X_2,\ldots$ we write $X_n\overset{\text{d}}{\longrightarrow}X$ to indicate that $X_n$ converges to $X$ in distribution, and $X_n\overset{\text{a.s.}}{\longrightarrow}X$ or $X_n\overset{\Pro}{\longrightarrow}X$ if $X_n$ converges to $X$ almost surely or in probability, as $n\to\infty$, respectively.

\begin{lemma}\label{lem:ConvergenceDistributionProbability}
Let $X_1,X_2,\ldots$ be real-valued random variables and $c\in\R$ be a constant. Assume that $X_n\overset{\text{d}}{\longrightarrow}c$, as $n\to\infty$. Then $X_n\overset{\Pro}{\longrightarrow}c$, as $n\to\infty$.
\end{lemma}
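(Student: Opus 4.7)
The plan is to exploit the standard characterisation of convergence in distribution via distribution functions and reduce convergence in probability to evaluating these distribution functions at suitable continuity points.

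First I would recall that the distribution function of the degenerate random variable equal to $c$ is $F_c(x)=\mathbbm{1}_{[c,\infty)}(x)$, whose only discontinuity is at $x=c$. The hypothesis $X_n\overset{\mathrm{d}}{\longrightarrow}c$ therefore means exactly that $F_{X_n}(x)\to F_c(x)$ for every $x\neq c$, i.e.\ $F_{X_n}(x)\to 0$ if $x<c$ and $F_{X_n}(x)\to 1$ if $x>c$.

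Next, I would fix $\eps>0$ and split the event $\{|X_n-c|\geq \eps\}$ into its two tails:
\[
\Pro(|X_n-c|\geq\eps)\;\leq\;\Pro(X_n\geq c+\eps)+\Pro(X_n\leq c-\eps).
\]
The first summand equals $1-\Pro(X_n<c+\eps)\leq 1-F_{X_n}(c+\eps/2)$, and $c+\eps/2$ is a continuity point of $F_c$, so this summand tends to $1-1=0$. The second summand equals $F_{X_n}(c-\eps)$, and $c-\eps$ is also a continuity point of $F_c$, so it tends to $0$. Combining both bounds yields $\Pro(|X_n-c|\geq\eps)\to 0$, which is the definition of $X_n\overset{\Pro}{\longrightarrow}c$.

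Since $\eps>0$ was arbitrary, the claim follows. There is no real obstacle here; the only technical point worth being careful about is to replace strict inequalities by non-strict ones (or vice versa) using an auxiliary continuity point such as $c\pm\eps/2$, which avoids evaluating the limiting distribution function at its unique discontinuity $c$.
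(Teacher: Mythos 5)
Your proof is correct, but it follows a genuinely different route from the one in the paper. The paper invokes the portmanteau theorem: since $\R\setminus(c-\eps,c+\eps)$ is closed and $X_n\overset{\mathrm{d}}{\longrightarrow}c$, one has $\limsup_n\Pro\bigl(X_n\in\R\setminus(c-\eps,c+\eps)\bigr)\leq\Pro\bigl(c\in\R\setminus(c-\eps,c+\eps)\bigr)=0$, and that is the whole argument. You instead work directly with the definition of convergence in distribution via pointwise convergence of distribution functions at continuity points of the limit, splitting $\{|X_n-c|\geq\eps\}$ into the two tails and evaluating $F_{X_n}$ at $c+\eps/2$ and $c-\eps$, both of which avoid the single discontinuity at $c$. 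Your half-epsilon shift is the right way to handle the strict/non-strict inequality mismatch between $\Pro(X_n\geq c+\eps)$ and $F_{X_n}$, and the bound $\Pro(X_n\geq c+\eps)\leq 1-F_{X_n}(c+\eps/2)$ is valid. What your route buys is that it does not rely on the portmanteau theorem; it is completely self-contained at the level of CDFs, which is slightly more elementary. What the paper's route buys is brevity: one closed-set estimate replaces the two-tail decomposition and the continuity-point bookkeeping. Both are standard and both are correct.
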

\begin{proof}
Fix $\varepsilon>0$ and note that
$$
\Pro(|X_n-c|>\varepsilon) = \Pro(X_n\notin (c-\varepsilon,c+\varepsilon)).
$$
By the portmanteau theorem \cite[Theorem 4.25 (iii)]{Kallenberg} we have that, since $X_n\overset{\text{d}}{\longrightarrow}c$ and since the complement of $(c-\varepsilon,c+\varepsilon)$ is a closed subset of $\R$,
$$
\limsup_{n\to\infty}\Pro(X_n\notin (c-\varepsilon,c+\varepsilon)) \leq \Pro(c\notin(c-\varepsilon,c+\varepsilon)) = 0.
$$
Therefore,
$$
\lim_{n\to\infty}\Pro(|X_n-c|>\varepsilon) = 0,
$$
meaning that $X_n\overset{\Pro}{\longrightarrow}c$, as $n\to\infty$.
\end{proof}

The next Slutsky-type result deals with convergence in probability of products and quotients.

\begin{lemma}\label{lem:ConvergenceProductsQuotients}
Let $X,X_1,X_2,\ldots$ and $Y,Y_1,Y_2,\ldots$ be real-valued random variables. Assume that $X_n\overset{\Pro}{\longrightarrow}X$ and $Y_n\overset{\Pro}{\longrightarrow}Y$, as $n\to\infty$.
\begin{itemize}
\item[(i)]  One has that $X_nY_n\overset{\Pro}{\longrightarrow}XY$, as $n\to\infty$.
\item[(ii)] Suppose in addition that $\Pro(Y=0)=0$. Then $X_n/Y_n\overset{\Pro}{\longrightarrow}X/Y$, as $n\to\infty$.
\end{itemize}
\end{lemma}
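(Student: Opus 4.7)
The plan is to reduce part (ii) to part (i) via the observation that $X_n/Y_n = X_n \cdot (1/Y_n)$, so the core task is (i) together with showing $1/Y_n \xrightarrow{\Pro} 1/Y$ under the assumption $\Pro(Y=0) = 0$.

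For part (i), I would use the algebraic identity
\[
X_n Y_n - XY = (X_n - X)Y_n + X(Y_n - Y),
\]
and show each term converges to $0$ in probability. The basic tool is the following fact: if $A_n \xrightarrow{\Pro} 0$ and $(B_n)$ is tight (in the sense that for every $\eta > 0$ there is $M$ with $\sup_n \Pro(|B_n| > M) < \eta$), then $A_n B_n \xrightarrow{\Pro} 0$. This follows from
\[
\Pro(|A_n B_n| > \varepsilon) \leq \Pro(|B_n| > M) + \Pro(|A_n| > \varepsilon/M),
\]
by first choosing $M$ so that the first term is small, then letting $n \to \infty$. To apply this to the two summands, I note that $(Y_n)$ is tight (a sequence converging in probability to a finite random variable is tight, since $\Pro(|Y_n| > M) \leq \Pro(|Y_n - Y| > 1) + \Pro(|Y| > M-1)$), and $X$ is a finite random variable, hence the constant sequence $(X)$ is tight.

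For part (ii), it suffices to prove $1/Y_n \xrightarrow{\Pro} 1/Y$ and then invoke (i). Fix $\varepsilon > 0$. Since $\Pro(Y = 0) = 0$, we may choose $\eta > 0$ so small that $\Pro(|Y| < 2\eta) < \varepsilon$. On the event $\{|Y| \geq 2\eta\} \cap \{|Y_n - Y| < \eta\}$ one has $|Y_n| \geq \eta$, and therefore
\[
\left| \frac{1}{Y_n} - \frac{1}{Y} \right| = \frac{|Y - Y_n|}{|Y_n||Y|} \leq \frac{|Y_n - Y|}{2\eta^2}.
\]
Thus for any $\delta > 0$,
\[
\Pro\bigl(|1/Y_n - 1/Y| > \delta\bigr) \leq \Pro(|Y| < 2\eta) + \Pro(|Y_n - Y| \geq \eta) + \Pro\bigl(|Y_n - Y| \geq 2\eta^2\delta\bigr),
\]
and the last two terms vanish as $n \to \infty$ by the hypothesis $Y_n \xrightarrow{\Pro} Y$, leaving a bound less than $\varepsilon$. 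Since $\varepsilon$ was arbitrary, $1/Y_n \xrightarrow{\Pro} 1/Y$, and combining with (i) gives $X_n/Y_n \xrightarrow{\Pro} X/Y$.

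The proof is entirely standard and I do not anticipate any real obstacle; the only point requiring mild care is verifying tightness of $(Y_n)$ to handle the case where $Y$ is itself a (non-constant) random variable rather than a constant. Everything else is a routine $\varepsilon$-$M$ argument.
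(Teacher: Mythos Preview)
Your argument is correct. The paper, however, takes a shorter route: it first observes that $X_n\overset{\Pro}{\to}X$ and $Y_n\overset{\Pro}{\to}Y$ imply $(X_n,Y_n)\overset{\Pro}{\to}(X,Y)$, and then simply invokes the continuous mapping theorem (cited as \cite[Lemma~3.3]{Kallenberg}) with $f(x,y)=xy$ for part~(i), and with $f(x)=1/x$ for part~(ii), the latter being legitimate because $\Pro(Y\in D_f)=\Pro(Y=0)=0$. Your approach is more self-contained: you essentially reprove the relevant special cases of the continuous mapping theorem from first principles via the decomposition $X_nY_n-XY=(X_n-X)Y_n+X(Y_n-Y)$, a tightness argument, and an explicit $\varepsilon$--$\eta$ estimate for the reciprocal. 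The paper's version is quicker if one is willing to cite the continuous mapping theorem as a black box; yours has the advantage of being elementary and not relying on any external reference.
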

\begin{proof}
To prove (i) we note that since $X_n\overset{\Pro}{\longrightarrow}X$ and $Y_n\overset{\Pro}{\longrightarrow}Y$ we also have that $(X_n,Y_n)\overset{\Pro}{\longrightarrow}(X,Y)$, as $n\to\infty$. Applying the continuous mapping theorem \cite[Lemma 3.3]{Kallenberg} to the continuous function $f:\R^2\to\R,(x,y)\mapsto xy$ we conclude that $X_nY_n\overset{\Pro}{\longrightarrow}XY$ and the proof is complete.

To prove (ii) we let $D_f$ be the set of discontinuity points of a function $f:\R\to\R$. Since $\Pro(Y=0)=0$ we have that $\Pro(Y\in D_f)=0$ for the function $f(x)=1/x$. Whence, $1/Y_n\overset{\Pro}{\longrightarrow}1/Y$, as $n\to\infty$, by the continuous mapping theorem \cite[Lemma 3.3]{Kallenberg}. The result follows now from part (i).
\end{proof}

\subsection{Gaussian ensembles}

We denote for $n\in\N$ and $\beta\in\{1,2,4\}$ by $\Mat_{n}(\mathbb F_\beta)$ the space of $n\times n$ matrices with entries from $\F_\beta$. For a matrix $A\in \Mat_{n}(\mathbb F_\beta)$ we let $A^*$ be the adjoint of $A$, i.e.\  the matrix obtained from $A$ first by transposing $A$ and then applying the conjugation operation to each entry. We are interested in the matrix spaces
$$
\mathscr H_n(\mathbb{F}_\beta) := \{A\in \Mat_{n}(\mathbb F_\beta):A=A^*\}.
$$
Clearly, each $\mathscr H_n(\mathbb{F}_\beta)$ is a vector space over $\R$. Endowed with the scalar product $\langle A, B \rangle = \Re \Tr (A B^*)$, where $\Tr$ is the trace of a matrix, $\mathscr H_n(\mathbb{F}_\beta)$ becomes a Euclidean space. We denote by $\vol_{\beta,n}(\,\cdot\,)$ the (Riemannian) volume measure on $\mathscr H_n(\mathbb{F}_\beta)$ corresponding to this scalar product. Let us remark that this measure coincides with the (suitably normalized) $({\beta n(n-1)\over 2}+n)$-dimensional Hausdorff measure on $\mathscr H_n(\mathbb{F}_\beta)$ as follows directly from the area-coarea formula.

For each self-adjoint matrix $A\in \mathscr H_n(\mathbb{F}_\beta)$ we denote by $\lambda_1(A),\ldots,\lambda_n(A)$ the (real) eigenvalues of $A$ and simply write $\lambda_1,\dots,\lambda_n$ if it is unambiguous what the underlying matrix is. We refer to Appendix E in \cite{AGZ2010} for a formal definition of eigenvalues in the symplectic case, i.e., when $\beta=4$. Let us further define the constant
\begin{equation}\label{eq:DefConstantcnbeta}
c_{n,\beta} := {1\over n!}\bigg({2\pi^{\beta/2}\over\Gamma({\beta\over 2})}\bigg)^{-1}\,{\prod\limits_{k=1}^n{2(2\pi)^{\beta k/2}\over 2^{\beta/2}\Gamma({\beta k\over 2})}}\,.
\end{equation}
This allows us to recall the following Weyl integration formula that can be found in \cite[Proposition 4.1.1]{AGZ2010}; see also \cite[Proposition 4.1.14]{AGZ2010} from which the formula for $c_{n,\beta}$ can be derived.

\begin{lemma}\label{lem:IntegrationFormulaForEnsembles}
Fix $n\in\N$ and $\beta\in\{1,2,4\}$. Let $f:\mathscr H_n(\mathbb{F}_\beta)\to\R$ be a non-negative and Borel measurable function such that $f(A)$ only depends on the eigenvalues of $A$. Then
$$
\int_{\mathscr H_n(\mathbb{F}_\beta)} f(A)\,\vol_{\beta,n}(\dint A) = c_{n,\beta}\int_{\R^n} f(\lambda)\,\prod_{1\leq i<j\leq n}|\lambda_i-\lambda_j|^\beta\,
\dint \lambda_1 \ldots \dint \lambda_n\,,
$$
where for every $\lambda=(\lambda_1,\ldots,\lambda_n)\in\R^n$ we write $f(\lambda)=f(A)$ for any matrix $A\in \mathscr H_n(\mathbb{F}_\beta)$ with eigenvalues $\lambda_1,\ldots,\lambda_n$.
\end{lemma}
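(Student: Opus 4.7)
The plan is a spectral-decomposition change of variables followed by a Jacobian computation in Lie-algebra coordinates, following the classical template for Weyl integration formulas. On the full-measure open set $\mathscr{H}_n^*(\mathbb{F}_\beta) \subset \mathscr{H}_n(\mathbb{F}_\beta)$ of matrices with $n$ pairwise distinct eigenvalues, the spectral theorem gives $A = U\Lambda U^*$ with $\Lambda = \diag(\lambda_1,\ldots,\lambda_n)$, $\lambda_1 < \cdots < \lambda_n$, and $U$ in the compact Lie group $G_\beta$ of $\mathbb{F}_\beta$-unitary matrices (i.e., $O(n)$, $U(n)$, or the compact symplectic group for $\beta = 1,2,4$). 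The pair $(U,\Lambda)$ is unique up to right multiplication of $U$ by the stabilizer torus $T_\beta \subset G_\beta$ that commutes with every real diagonal matrix, so $(UT_\beta,\Lambda) \mapsto U\Lambda U^*$ is a diffeomorphism from $(G_\beta/T_\beta) \times W_n$ onto $\mathscr{H}_n^*(\mathbb{F}_\beta)$, where $W_n = \{\lambda_1 < \cdots < \lambda_n\}$.

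To compute the Jacobian at a generic point, I parametrize infinitesimal variations by $U \mapsto U\exp(\varepsilon H)$, with $H$ in a fixed complement of $\mathrm{Lie}(T_\beta)$ inside $\mathrm{Lie}(G_\beta)$, and $\Lambda \mapsto \Lambda + \varepsilon D$ with $D$ real diagonal. The differential of the spectral map equals $U(D + [H,\Lambda])U^*$, and since conjugation by $U$ is an isometry of $\mathscr{H}_n(\mathbb{F}_\beta)$ for the scalar product $\langle A,B\rangle = \Re\Tr(AB^*)$, the Jacobian coincides with that of the linear map $(H,D) \mapsto D + [H,\Lambda]$. The piece $D$ fills the diagonal with unit Jacobian, while $[H,\Lambda]$ sends the $(i,j)$-th off-diagonal $\mathbb{F}_\beta$-entry $H_{ij}$ to $(\lambda_j - \lambda_i)H_{ij}$, which scales the $\beta$-dimensional real $(i,j)$-block by $|\lambda_j - \lambda_i|$. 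This yields the overall Jacobian factor $\prod_{1 \le i < j \le n}|\lambda_j - \lambda_i|^\beta$.

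Applying the change of variables and Fubini, and then using invariance of $f$ under permutations to drop the ordering constraint $\lambda_1 < \cdots < \lambda_n$ at the cost of a factor $1/n!$ from the Weyl group $\mathfrak{S}(n)$, one obtains
$$\int_{\mathscr{H}_n(\mathbb{F}_\beta)} f(A)\,\vol_{\beta,n}(\dint A) = \frac{\vol_\beta(G_\beta/T_\beta)}{n!}\int_{\mathbb{R}^n} f(\lambda) \prod_{1 \le i < j \le n}|\lambda_j - \lambda_i|^\beta\, \dint\lambda_1 \cdots \dint\lambda_n,$$
where $G_\beta/T_\beta$ is equipped with the measure induced by the Haar measure compatible with $\langle\cdot,\cdot\rangle$. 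The final step is to identify the prefactor with $c_{n,\beta}$ from \eqref{eq:DefConstantcnbeta}, which follows from the classical product formulas for the Riemannian volumes of $O(n)$, $U(n)$, and the compact symplectic group, together with the volume $2\pi^{\beta/2}/\Gamma(\beta/2)$ of the corresponding maximal torus.

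The hardest part is this last bookkeeping of normalizations: the Jacobian calculation is clean once the Lie-algebra splitting is fixed, but matching the induced Haar measure on $G_\beta/T_\beta$ to the Euclidean structure on $\mathscr{H}_n(\mathbb{F}_\beta)$ requires a careful, dimension-by-dimension accounting, and the symplectic case $\beta = 4$ also demands the convention from Appendix E of \cite{AGZ2010} that counts each Kramers-degenerate eigenvalue pair only once. For this reason one would most naturally deduce the stated identity from \cite[Proposition 4.1.14]{AGZ2010}, as the authors indicate.
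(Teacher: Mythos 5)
The paper offers no proof of this lemma; it simply cites \cite[Propositions 4.1.1 and 4.1.14]{AGZ2010}, and your closing remark shows you are aware of this. Your sketch is a correct outline of the spectral-decomposition argument that underlies that citation: the full-measure set of matrices with simple spectrum, the quotient by the centralizer, the differential $(H,D)\mapsto D+[H,\Lambda]$ after undoing the isometric conjugation, and the factor $|\lambda_j-\lambda_i|^\beta$ from the $\beta$-real-dimensional off-diagonal block $(i,j)$ under multiplication by $\lambda_j-\lambda_i$. The dimension count also checks out in all three cases, including $\beta=4$ once one adopts the AGZ convention from Appendix E that a quaternionic self-adjoint matrix has $n$ (not $2n$) eigenvalues. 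Two small remarks: first, calling the centralizer $T_\beta$ a ``stabilizer torus'' is loose for $\beta=1$, where it is the finite group $\{\pm 1\}^n$ rather than a torus, though this changes nothing in the argument (its ``volume'' $2 = 2\pi^{1/2}/\Gamma(1/2)$ still fits the paper's normalization). Second, the genuinely nontrivial bookkeeping — matching $\vol(G_\beta/T_\beta)/n!$ with $c_{n,\beta}$ from \eqref{eq:DefConstantcnbeta} via the Riemannian volumes of $O(n)$, $U(n)$, $Sp(n)$ and their tori in the metric induced by $\langle A,B\rangle=\Re\Tr(AB^*)$ — is, as you say, precisely what \cite[Proposition 4.1.14]{AGZ2010} packages, and deferring it there is what the authors do as well. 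So your proposal is correct in outline and is the argument behind the paper's citation; it is not a competing route.
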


\subsection{The Ullman distribution and logarithmic potentials with external fields}\label{subsec:ullman-potential theory}

We call a random variable $\mathbb U$  with values in $[-1,1]$ an Ullman random variable with parameter $p>0$, and write $\mathbb U\sim\mathscr {U}(p)$, if it has density with respect to Lebesgue measure given by
\[
h_p(x):={p\over\pi} \int_{|x|}^1{t^{p-1}\over\sqrt{t^2-x^2}}\,\dint t,\qquad |x|\leq 1.
\]
We notice that $\mathbb U\sim\mathscr{U}(p)$ has the same distribution as the product $AB$, where the random variables $A$ and $B$ are independent and $A$ has an arcsine distribution on $[-1,1]$ with density $x\mapsto{1\over\pi}(1-x^2)^{-1/2}$, while $B$ has a beta distribution with density $x\mapsto px^{p-1}$ with $x\in[0,1]$ (see, e.g., \cite[Lemma~4.1]{VA1987}). In particular, we obtain for $\mathbb U\sim \mathscr U(p)$,
\begin{align}\label{eq:expectation ullman power p}
\E|\mathbb U|^p = \E|A|^p \,\E|B|^p =  \frac{1}{2}\frac{\Gamma(\frac{p+1}{2})}{\sqrt{\pi}\Gamma(\frac{p+2}{2})} .
\end{align}
More generally, the $q$-th absolute moment of $\mathbb U\sim \mathscr U(p)$ is given by
\begin{align}\label{eq:expectation ullman power p_general}
\E|\mathbb U|^q
= \E|A|^q \,\E|B|^q = \frac{p}{p+q}\frac{\Gamma(\frac{q+1}{2})}{\sqrt{\pi}\Gamma(\frac{q+2}{2})}  .
\end{align}
As another consequence of this representation, one derives that the arcsine distribution on $[-1,1]$ is the weak limit of the Ullman distribution, as $p\to\infty$.

The Ullman distribution plays an important r\^ole in the theory of logarithmic potentials with external fields~\cite{SaffBOOK}, in the theory of orthogonal polynomials with respect to Freud weights $e^{-c|x|^p}$ with $c>0$ being a constant \cite{mhaskar_saff,rakhmanov}, and in the theory of random matrices~\cite{hiai_petz,PS2011}. For a probability measure $\mu$ on $\R$ consider the energy functional
\[
\mathscr E_p(\mu) := \int_{\R} \int_{\R} \log \frac{1}{|x-y|}\, \mu(\dint x) \mu(\dint y) +  2 \int_{\R} Q_p(x)\, \mu(\dint x),
\]
where $Q_p(x) := |x|^p/\lambda_p$ defines the `external field' with
\begin{align}\label{eq:lambda p}
\lambda_p := \frac{2}{\sqrt{\pi}} \frac{\Gamma(\frac{p+1}{2})}{\Gamma(\frac{p}{2})} = \frac{p}{\pi}\int_{-1}^1 \frac{|x|^p}{\sqrt{1-x^2}}\, \dint x.
\end{align}
It is known (see, e.g., \cite[Theorem~5.1]{SaffBOOK}) that the Ullman distribution $\mu^{(p)}=\mathscr{U}(p)$ is the unique minimizer of the energy functional $\mathscr E_p$ among all probability measures on $\R$ with finite absolute $p$-th moment.  The above choice of $\lambda_p$ makes the support of the minimizer to be the interval $[-1,1]$.

Let us rephrase some results that shall be used later. The first one is taken from \cite[Lemma 4.3]{mhaskar_saff}.

\begin{lemma}\label{lem:mhaskar-saff}
Let $p>0$. Then, for all $y\in[-1,1]$,
\[
\int_{-1}^1 h_p(x) \log |x-y|\, \dint x = \frac{|y|^p}{\lambda_p} - \log 2 - \frac{1}{p}.
\]
\end{lemma}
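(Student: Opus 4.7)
This identity is the Euler--Lagrange (Frostman) equilibrium condition: since $\mathscr{U}(p)$ minimizes $\mathscr{E}_p$, we know a priori that the left-hand side must equal $Q_p(y) - F_p$ for some constant $F_p$ on the support $[-1,1]$. So morally there is nothing to prove except identifying the Robin constant $F_p = \log 2 + 1/p$. That said, rather than invoking this general theory I would carry out a direct computation, which is short and which pins down the constant explicitly.

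\textbf{Step 1 (Fubini).} Substitute the definition of $h_p$ and swap the order of integration to get
\[
\int_{-1}^1 h_p(x)\log|x-y|\,\dint x
= \frac{p}{\pi}\int_0^1 t^{p-1}\int_{-t}^t \frac{\log|x-y|}{\sqrt{t^2-x^2}}\,\dint x\,\dint t.
\]
After rescaling $x=tu$, the inner integral becomes $\int_{-1}^1 \log|tu-y|(1-u^2)^{-1/2}\,\dint u$, i.e.\ $\pi$ times the logarithmic potential of the arcsine measure on $[-1,1]$ evaluated at $y/t$ (plus $\log t$).

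\textbf{Step 2 (arcsine potential).} Invoke the classical identity
\[
\int_{-1}^1 \frac{\log|u-z|}{\pi\sqrt{1-u^2}}\,\dint u
= \begin{cases} -\log 2 & |z|\le 1,\\ \log\frac{|z|+\sqrt{z^2-1}}{2} & |z|\ge 1, \end{cases}
\]
applied to $z=y/t$. This splits the outer integral at $t=|y|$, giving
\[
\int_{-1}^1 h_p(x)\log|x-y|\,\dint x
= p\!\int_{|y|}^1 t^{p-1}\log\!\tfrac{t}{2}\,\dint t
\,+\, p\!\int_0^{|y|} t^{p-1}\log\!\tfrac{|y|+\sqrt{y^2-t^2}}{2}\,\dint t.
\]

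\textbf{Step 3 (elementary piece).} The first integral is a direct antiderivative of $t^{p-1}\log(t/2)$: use $\int t^{p-1}\log t\,\dint t = \frac{t^p}{p}\log t - \frac{t^p}{p^2}$ and evaluate at the endpoints $1$ and $|y|$.

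\textbf{Step 4 (the main piece).} In the second integral, set $u = t/|y|$ and pull out $\log(|y|/2)$. What remains is the quantity
\[
J := \int_0^1 u^{p-1}\log\!\bigl(1+\sqrt{1-u^2}\bigr)\,\dint u,
\]
and this is the step I expect to be the crux. I would attack it by integration by parts with $g'=u^{p-1}$, $g=u^p/p$, using the algebraic simplification
\[
\frac{\dint}{\dint u}\log\bigl(1+\sqrt{1-u^2}\bigr)
= \frac{-u}{(1+\sqrt{1-u^2})\sqrt{1-u^2}}
= \frac{1}{u} - \frac{1}{u\sqrt{1-u^2}},
\]
(which comes from rationalizing by $1-\sqrt{1-u^2}$). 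The boundary terms vanish, and the remaining integrals reduce to $\int_0^1 u^{p-1}\,\dint u = 1/p$ and the beta-type integral
\[
\int_0^1\frac{u^{p-1}}{\sqrt{1-u^2}}\,\dint u
= \tfrac12 B(\tfrac{p}{2},\tfrac12)
= \frac{\sqrt{\pi}\,\Gamma(p/2)}{2\,\Gamma((p+1)/2)} = \frac{1}{\lambda_p},
\]
which is precisely where $\lambda_p$ enters (this is the whole point of how $\lambda_p$ was chosen in \eqref{eq:lambda p}). One obtains $J = -\frac{1}{p^2}+\frac{1}{p\lambda_p}$.

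\textbf{Step 5 (combine).} Assembling Steps 3 and 4, the terms $y^p\log y$, $y^p\log 2$, and $y^p/p$ cancel in pairs between the two pieces, and what survives is exactly $\frac{|y|^p}{\lambda_p}-\log 2-\frac{1}{p}$. The main obstacle is really Step 4: the neat cancellation into $1/\lambda_p$ is what makes the statement clean, and it hinges on the algebraic identity for the derivative of $\log(1+\sqrt{1-u^2})$.
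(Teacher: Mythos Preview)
Your computation is correct in every step: the Fubini swap, the split via the arcsine potential, the evaluation of $J$ by parts with the rationalization trick, and the final cancellations all check out and yield exactly $\frac{|y|^p}{\lambda_p}-\log 2-\frac{1}{p}$.

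The paper, however, does not prove this lemma at all --- it simply quotes it from \cite[Lemma~4.3]{mhaskar_saff}. So your approach is not ``essentially the same'' as the paper's; rather, you supply a self-contained derivation where the paper outsources to a reference. What you gain is that the appearance of $1/\lambda_p$ becomes transparent (it is exactly the beta integral $\int_0^1 u^{p-1}(1-u^2)^{-1/2}\,\dint u$), and the Robin constant $\log 2+1/p$ is computed rather than looked up. What the citation buys is brevity and a direct link to the potential-theoretic framework in which the identity naturally lives --- namely the Frostman condition for the equilibrium measure under the external field $Q_p$, which you correctly identify in your opening remark.
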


The next lemma is a direct consequence of the previous one and determines the `free entropy' of the Ullman distribution with parameter $p>0$.

\begin{lemma}\label{lem:double integral ullman}
Let $p>0$. Then
\[
\int_{-1}^1 \int_{-1}^1 h_p(x)\,h_p(y)\,\log |x-y|\, \dint x\, \dint y = -\log 2 - \frac{1}{2p}.
\]
\end{lemma}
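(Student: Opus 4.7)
The plan is to simply integrate Lemma \ref{lem:mhaskar-saff} against the density $h_p$ in the second variable. Multiplying the identity
\[
\int_{-1}^1 h_p(x) \log |x-y|\, \dint x = \frac{|y|^p}{\lambda_p} - \log 2 - \frac{1}{p}
\]
by $h_p(y)$ and integrating over $y\in[-1,1]$, Fubini's theorem (applicable since $\log|x-y|$ is bounded above and integrable against the product density) gives the double integral on the left. On the right, the constant terms contribute $-\log 2 - \tfrac{1}{p}$ because $h_p$ is a probability density, leaving one to evaluate $\lambda_p^{-1}\int_{-1}^1 |y|^p h_p(y)\,\dint y = \lambda_p^{-1}\, \E|\mathbb{U}|^p$ with $\mathbb{U}\sim\mathscr{U}(p)$.

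The second step is a routine gamma-function identity. Using the explicit value of $\E|\mathbb{U}|^p$ from \eqref{eq:expectation ullman power p} and the definition of $\lambda_p$ from \eqref{eq:lambda p}, the ratio collapses as
\[
\frac{\E|\mathbb{U}|^p}{\lambda_p} = \frac{\tfrac12\,\Gamma(\tfrac{p+1}{2})/\bigl(\sqrt{\pi}\,\Gamma(\tfrac{p+2}{2})\bigr)}{(2/\sqrt{\pi})\,\Gamma(\tfrac{p+1}{2})/\Gamma(\tfrac{p}{2})} = \frac{1}{4}\,\frac{\Gamma(\tfrac{p}{2})}{\Gamma(\tfrac{p}{2}+1)} = \frac{1}{2p},
\]
after invoking $\Gamma(\tfrac{p}{2}+1) = \tfrac{p}{2}\,\Gamma(\tfrac{p}{2})$. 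Combining the two contributions yields $-\log 2 - \tfrac{1}{p} + \tfrac{1}{2p} = -\log 2 - \tfrac{1}{2p}$, as claimed.

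There is essentially no obstacle here: Lemma \ref{lem:mhaskar-saff} does all of the hard potential-theoretic work, and what remains is integration against a probability measure plus a short gamma-function cancellation. The only point worth a line of justification is the legitimacy of Fubini, which follows from the local $L^1$-integrability of $\log|x-y|$ together with the fact that $h_p$ is bounded on compact subsets of $(-1,1)$ and integrable near the endpoints.
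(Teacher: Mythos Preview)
Your proof is correct and follows essentially the same approach as the paper: integrate the identity of Lemma~\ref{lem:mhaskar-saff} against $h_p(y)$, then evaluate $\E|\mathbb U|^p/\lambda_p$ via \eqref{eq:expectation ullman power p}, \eqref{eq:lambda p} and the functional equation for $\Gamma$. The only addition you make is an explicit justification of Fubini, which the paper leaves implicit.
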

\begin{proof}
We use Lemma \ref{lem:mhaskar-saff} and obtain that
\begin{align*}
\int_{-1}^1 \int_{-1}^1 h_p(x)\,h_p(y)\,\log |x-y|\, \dint x\,\dint y & = \int_{-1}^1  h_p(x)\bigg[ \frac{|y|^p}{\lambda_p} - \log 2 - \frac{1}{p}\bigg] \dint y
 = \frac{\E|\mathbb U|^p}{\lambda_p} - \log 2 - \frac{1}{p}\,,
\end{align*}
where $\mathbb U\sim\mathscr U(p)$. Using \eqref{eq:expectation ullman power p} together with \eqref{eq:lambda p}, we arrive at
\begin{align}\label{eq:p-moment ullman random variable}
\frac{\E|\mathbb U|^p}{\lambda_p} = \frac{1}{2}\frac{\Gamma(\frac{p+1}{2})}{\sqrt{\pi}\Gamma(\frac{p+2}{2})} \cdot \frac{\sqrt{\pi}}{2} \frac{\Gamma(\frac{p}{2})}{\Gamma(\frac{p+1}{2})} = \frac{1}{2p}.
\end{align}
Therefore,
\[
\int_{-1}^1 \int_{-1}^1 h_p(x)\,h_p(y)\,\log |x-y|\, \dint x\,\dint y = -\log 2 - \frac{1}{2p}
\]
and the proof is complete.
\end{proof}

\subsection{Fekete points}\label{subsec:fekete}
Let $E\subset\C$ be an infinite, bounded and closed set. For a positive integer $k\in\N$ the $k$-diameter of $E$ is defined as
\[
\delta_k := \delta_k(E):= \sup_{t_1,\dots,t_k\in E} \bigg(\prod_{1\leq i < j \leq k} |t_j-t_i|\bigg)^{\frac{2}{k(k-1)}}.
\]
The points maximizing the product are called Fekete points. These points are pairwise different and, roughly speaking, maximally spread out over $E$. Note that $\delta_2$ is simply the diameter of the set $E$ while, for $k\geq 3$, $\delta_k$ is the maximum of the geometric means of segments that arise as edges of some complete graph with $n$ nodes in $E$.

It is easily verified that the sequence $(\delta_k)_{k\in\N}$ is non-increasing. Its limit is the so-called transfinite diameter of $E$ and it is well known that the transfinite diameter of a line segment is $1/4$ times its length, see \cite{MF2}. We shall be interested in the particular case where $E=[-1,1]$, whose transfinite diameter equals $1/2$. It is also known that the Fekete points of $[-1,1]$ are the roots of $(1-x^2)P_{n-2}^{(1,1)}(x)$ with $P_{n-2}^{(1,1)}(x)$ being the Jacobi polynomial of order $n-2$ with parameters $(1,1)$ (see, e.g., \cite[p.\ 187]{SaffBOOK}).

For more information on the transfinite diameter and Fekete points we refer the reader to \cite{MF1,MF2,PS1931,SaffBOOK} and references therein.

\subsection{Gauss-Lobatto Chebychev nodes}\label{subsec:gauss-lobatto}

For some $n\in\N$, the Gauss-Lobatto Chebychev nodes are defined by
\[
\widetilde{t}_j := -\cos\Big(\frac{j-1}{n-1}\,\pi\Big)\in\R,\qquad j=1,\dots,n.
\]
This family of points appears frequently in polynomial interpolation as discretization grid to construct an interpolation polynomial. We shall use the following result about the determinant of the Vandermonde matrix at the Gauss-Lobatto Chebychev nodes taken from \cite[Proposition 3]{EF2005}. Let
$$
V_n(x_1,\ldots,x_n) := \begin{pmatrix}
1 & x_1 & x_1^2 & \ldots & x_1^{n-1}\\
1 & x_2 & x_2^2 & \ldots & x_2^{n-1}\\
\vdots & \vdots & \vdots & \ddots & \vdots\\
1 & x_n & x_n^2 & \ldots & x_n^{n-1}\\
\end{pmatrix}
$$
be the Vandermonde matrix based on $x_1,\ldots,x_n\in\R$.

\begin{lemma}\label{lem:vandermonde gauss-lobatto}
Let $n\in\N$ and let $\widetilde{t}_1,\ldots,\widetilde{t}_n$ be the Gauss-Lobatto Chebychev nodes. Then
\[
\det(V_n(\widetilde{t}_1,\ldots,\widetilde{t}_n)) = \prod_{1\leq k<\ell\leq n}(\,\widetilde{t}_\ell - \widetilde{t}_k)
=
2^{n+1-\frac 12 n^2} (n-1)^{\frac{n}{2}}.
\]
\end{lemma}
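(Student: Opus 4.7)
The plan is to identify the Gauss--Lobatto Chebychev nodes as the zeros of a concrete polynomial and to evaluate the product of its derivative at those zeros, which is directly linked to the square of the Vandermonde determinant. First I observe that $\widetilde{t}_j = -\cos\frac{(j-1)\pi}{n-1}$ are exactly the extrema of the Chebychev polynomial $T_{n-1}$ on $[-1,1]$. Since $T'_{n-1} = (n-1)U_{n-2}$, with $U_{n-2}$ the Chebychev polynomial of the second kind, and since $\widetilde{t}_1 = -1$ and $\widetilde{t}_n = 1$, the interior nodes $\widetilde{t}_2,\dots,\widetilde{t}_{n-1}$ are precisely the zeros of $U_{n-2}$. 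Consequently
\[
P(x) := (x^2-1)\,U_{n-2}(x)
\]
is a polynomial of degree $n$ with leading coefficient $2^{n-2}$ whose only zeros are the $\widetilde{t}_j$.

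From here the standard identity $P'(\widetilde{t}_k) = 2^{n-2}\prod_{j\neq k}(\widetilde{t}_k-\widetilde{t}_j)$, combined with $\prod_{k\neq j}(\widetilde{t}_k-\widetilde{t}_j) = (-1)^{\binom{n}{2}}\prod_{k<\ell}(\widetilde{t}_\ell-\widetilde{t}_k)^2$, yields
\[
\prod_{k=1}^n P'(\widetilde{t}_k) = (-1)^{\binom{n}{2}}\,2^{n(n-2)}\,\bigl[\det V_n(\widetilde{t}_1,\dots,\widetilde{t}_n)\bigr]^2,
\]
so the lemma reduces to computing $\prod_k P'(\widetilde{t}_k)$ explicitly.

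At the boundary nodes the factor $x^2-1$ vanishes, giving $P'(\pm 1) = \pm 2\,U_{n-2}(\pm 1) = \pm 2\,(\pm 1)^{n-2}(n-1)$. At an interior node $U_{n-2}$ vanishes, so $P'(\widetilde{t}_k) = (\widetilde{t}_k^2-1)U'_{n-2}(\widetilde{t}_k)$; parametrising $\widetilde{t}_k = \cos\beta_k$ with $\beta_k = (n-k)\pi/(n-1)$ and differentiating $U_{n-2}(\cos\beta)\sin\beta = \sin((n-1)\beta)$ one finds $P'(\widetilde{t}_k) = (n-1)(-1)^{n-k}$ for $k=2,\dots,n-1$. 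Multiplying these values together produces $4(n-1)^n$ in absolute value, and a short computation shows that the accumulated sign $(-1)^{n+1+(n-2)(n-1)/2}$ agrees with $(-1)^{\binom{n}{2}}$. Substituting back yields $\bigl[\det V_n(\widetilde t_1,\dots,\widetilde t_n)\bigr]^2 = 2^{2+2n-n^2}(n-1)^n$, and since the nodes are in strictly increasing order the Vandermonde determinant is positive, so taking the positive square root gives the claimed formula.

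The main obstacle I anticipate is the careful bookkeeping of the signs coming from the boundary values $(\pm 1)^{n-2}$, the interior values $(-1)^{n-k}$, and the factor $(-1)^{\binom{n}{2}}$; once the signs are shown to conspire correctly, the remainder of the argument relies only on classical trigonometric identities for $T_{n-1}$ and $U_{n-2}$ evaluated at their Chebychev nodes.
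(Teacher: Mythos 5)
Your proof is correct. The paper itself does not supply a proof of this lemma; it simply cites it from~\cite{EF2005}, so your argument provides a self-contained derivation that the paper leaves to the literature.

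Your route is the classical one via the polynomial $P(x)=(x^2-1)U_{n-2}(x)$ whose roots are exactly the Gauss--Lobatto Chebyshev nodes: since $P$ has leading coefficient $2^{n-2}$, one has $P'(\widetilde t_k)=2^{n-2}\prod_{j\neq k}(\widetilde t_k-\widetilde t_j)$, and hence $\prod_k P'(\widetilde t_k) = 2^{n(n-2)}(-1)^{\binom n2}\bigl[\det V_n\bigr]^2$. The explicit evaluations $P'(\pm1)=\pm 2 U_{n-2}(\pm1)=\pm 2(\pm1)^{n-2}(n-1)$ and, for interior nodes, $P'(\widetilde t_k)=(\widetilde t_k^2-1)U'_{n-2}(\widetilde t_k)=(n-1)(-1)^{n-k}$ (obtained by differentiating $U_{n-2}(\cos\beta)\sin\beta=\sin((n-1)\beta)$) give $\prod_k P'(\widetilde t_k)=(-1)^{n+1+\frac{(n-2)(n-1)}{2}}\cdot 4(n-1)^n$, and the exponent is congruent to $\binom n2$ modulo $2$ since $\binom n2 - \frac{(n-2)(n-1)}{2}=n-1$ while $n+1\equiv n-1\pmod 2$. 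Dividing through and taking the positive square root (the nodes are in increasing order) yields $\det V_n = 2^{1+n-\frac12 n^2}(n-1)^{n/2}$, which is the claimed formula. I verified this against $n=2,3,4$ and the arithmetic checks out. The only point worth adding explicitly is that the argument implicitly assumes $n\ge 2$ (the nodes are not defined for $n=1$), which is harmless here.
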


\begin{rmk} 
Note that Saint Raymond has erroneously missed a factor $2$ in his formula on page 68 in \cite{R1984}.
\end{rmk}

\section{Asymptotic volume of matrix balls}\label{sec:MatrixBallsVolume}

Let us fix $0 < p \leq \infty$, $n\in\N$ and $\beta\in\{1,2,4\}$, and recall that the space $\mathscr H_n(\mathbb{F}_\beta)$ consists of all matrices $A\in \Mat_{n}(\mathbb F_\beta)$ that are self-adjoint, i.e.\ satisfy $A= A^*$. We consider the matrix unit balls
$$
\B_{p,\beta}^n := \Big\{A\in \mathscr H_n(\mathbb{F}_\beta):\sum\limits_{j=1}^n|\lambda_j(A)|^p\leq1\Big\},
$$
which might be regarded as the matrix analogues of the classical $\ell^n_p$-balls. The case $p=\infty$ is interpreted in the usual way.
Using Lemma \ref{lem:IntegrationFormulaForEnsembles} with the appropriate indicator function, we first notice that
$$
\vol_{\beta,n}(\B_{p,\beta}^n) = c_{n,\beta}I_{n,\beta,p}\,,
$$
where the constant $c_{n,\beta}$ is given by \eqref{eq:DefConstantcnbeta} and $I_{n,\beta,p}$ is defined as
\begin{equation}\label{eq:I_n_beta_p}
I_{n,\beta,p} := \int_{\B_p^n}\prod_{1\leq i<j\leq n}|\lambda_j-\lambda_i|^\beta\,\dint \lambda_1 \ldots \dint \lambda_n.
\end{equation}
The eventual goal of this section is to prove the following result about the precise asymptotic volume of unit balls in the matrix ensembles $\mathscr H_n(\mathbb{F}_\beta)$.

\begin{thm}\label{thm:VolumeAsymptotics}
Let $0< p\leq \infty$ and $\beta\in\{1,2,4\}$. Then
$$
\vol_{\beta,n}(\B_{p,\beta}^n)^{2/n^2} \asymptequiv
n^{-\beta({1\over p}+{1\over 2})}\Delta^\beta(p) \left({4\pi\over \beta}\right)^{\beta/2}e^{3\beta/4},
$$
where $\Delta(\infty) = \frac 12$ and, for $p\neq \infty$,
\begin{align*}
\Delta(p)
&= \exp\left\{\int_{-1}^1\int_{-1}^1 h_p(x)h_p(y) \log |x-y| \dint x\,\dint y - \frac{1}{p} \log\E|\mathbb U|^p\right\}=\frac{1}{2}\bigg(\frac{p\sqrt{\pi}\,\Gamma(\frac{p}{2})}{\sqrt{e}\,\Gamma(\frac{p+1}{2})}\bigg)^{\frac{1}{p}}
\end{align*}
with $\mathbb U\sim \mathscr U(p)$ an Ullman random variable.
\end{thm}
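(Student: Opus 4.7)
The plan is to prove the formula by analysing separately the two factors in the decomposition $\vol_{\beta,n}(\B_{p,\beta}^n) = c_{n,\beta}\,I_{n,\beta,p}$ produced by the Weyl integration formula (Lemma~\ref{lem:IntegrationFormulaForEnsembles}). The factor $c_{n,\beta}^{2/n^2}$ is purely combinatorial and I would handle it by a direct application of Stirling's formula to \eqref{eq:DefConstantcnbeta}: the dominant contribution comes from $-\sum_{k=1}^n\log\Gamma(\beta k/2)$, which via the standard asymptotic $\sum_{k=1}^n k\log k = \tfrac{n^2}{2}\log n - \tfrac{n^2}{4} + o(n^2)$ yields $(c_{n,\beta})^{2/n^2}\asymptequiv n^{-\beta/2}(4\pi/\beta)^{\beta/2}e^{3\beta/4}$. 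Thus the theorem reduces to proving
$$I_{n,\beta,p}^{2/n^2}\asymptequiv n^{-\beta/p}\,\Delta(p)^\beta.$$

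To analyse $I_{n,\beta,p}$ I would rescale the variables by $\lambda_j = R_n\,x_j$ with $R_n := (n\,\E|\mathbb U|^p)^{-1/p}$, chosen so that the constraint $\sum|\lambda_j|^p\leq 1$ transforms into $\int|x|^p\,\dint\mu_n(x)\leq \E|\mathbb U|^p$ for the empirical measure $\mu_n := \tfrac1n\sum_{j=1}^n\delta_{x_j}$. The Jacobian and the homogeneity of the Vandermonde give
$$I_{n,\beta,p} = R_n^{\beta n(n-1)/2 + n}\int_{\Omega_n}\exp\Big(\tfrac{\beta}{2}\sum_{i\neq j}\log|x_i-x_j|\Big)\,\dint x,$$
with $\Omega_n := \{x\in\R^n : \sum|x_j|^p\leq n\,\E|\mathbb U|^p\}$, so that the integrand heuristically behaves like $\exp((\beta n^2/2)\,\mathcal I(\mu_n))$ with $\mathcal I(\mu) := \int\int\log|x-y|\,\dint\mu(x)\,\dint\mu(y)$. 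The crucial input from logarithmic potential theory is that the unique maximiser of $\mathcal I$ among probability measures satisfying $\int|x|^p\,\dint\mu\leq\E|\mathbb U|^p$ is the Ullman density $h_p$ supported on $[-1,1]$, with maximal value $-\log 2 - 1/(2p)$ by Lemma~\ref{lem:double integral ullman}. Combined with $R_n^\beta \asymptequiv n^{-\beta/p}(\E|\mathbb U|^p)^{-\beta/p}$ this predicts
$$I_{n,\beta,p}^{2/n^2} \asymptequiv n^{-\beta/p}(\E|\mathbb U|^p)^{-\beta/p}\cdot 2^{-\beta}e^{-\beta/(2p)} = n^{-\beta/p}\,\Delta(p)^\beta,$$
and the closed form of $\Delta(p)$ claimed in the theorem then follows from the explicit value \eqref{eq:expectation ullman power p} of $\E|\mathbb U|^p$ together with the recursion $\Gamma((p+2)/2)=(p/2)\Gamma(p/2)$.

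The main obstacle is converting this heuristic into matching \emph{upper} and \emph{lower} bounds with identical leading constants on the scale $n^2$. For the upper bound I would, given an admissible configuration, pass from the discrete sum $\sum_{i<j}\log|x_i-x_j|$ to the logarithmic energy of its empirical measure (after a standard regularisation removing the diagonal), and bound this energy using the variational identity of Lemma~\ref{lem:mhaskar-saff}; since the volume of $\Omega_n$ only grows like $R_n^{-n}$, it is subexponential on the $n^2$ scale and does not affect the leading asymptotics. For the matching lower bound I would exhibit an explicit near-optimal configuration, taking a rescaled image of the Gauss-Lobatto Chebychev nodes of Section~\ref{subsec:gauss-lobatto} (warped via the distribution function associated with $h_p$ so that its empirical measure approximates the Ullman law), evaluating its Vandermonde exactly via Lemma~\ref{lem:vandermonde gauss-lobatto}, and integrating over a small but volume-non-negligible neighbourhood in $\Omega_n$ on which the Vandermonde stays comparable to its value at the nodes. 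The case $p=\infty$ requires no rescaling: here $\Omega_n=[-1,1]^n$, the Ullman density degenerates to the arcsine law, the upper bound comes from the transfinite diameter $1/2$ of $[-1,1]$ (Section~\ref{subsec:fekete}), and the lower bound is furnished directly by Lemma~\ref{lem:vandermonde gauss-lobatto}, yielding $\Delta(\infty)=1/2$ consistently with the limit $p\to\infty$ of the general formula.
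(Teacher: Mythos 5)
Your decomposition $\vol_{\beta,n}(\B^n_{p,\beta})=c_{n,\beta}I_{n,\beta,p}$, the Stirling asymptotics for $c_{n,\beta}^{2/n^2}$, the $p=\infty$ argument via the transfinite diameter and Gauss--Lobatto nodes, and the potential-theoretic heuristic $I_{n,\beta,p}^{2/n^2}\asymptequiv n^{-\beta/p}\Delta(p)^\beta$ are all correct and match the paper in spirit; in particular your bookkeeping $\Delta(p)^\beta = 2^{-\beta}e^{-\beta/(2p)}(\E|\mathbb U|^p)^{-\beta/p}$ reproduces the stated constant.

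For $0<p<\infty$, however, the two halves of the argument you sketch are the parts the paper works hardest on, and as written they have gaps. On the \emph{upper} bound, "passing from the discrete sum $\sum_{i<j}\log|x_i-x_j|$ to the logarithmic energy of its empirical measure after a standard regularisation" is not standard here: the log-energy of any finitely-supported measure is $-\infty$, and to extract a usable inequality one must smear the points into disjoint intervals and show that (a) smearing does not decrease the double sum appreciably and (b) the smeared measure still approximately satisfies the $p$-th moment constraint. This in turn requires quantitative lower bounds on the gaps between the Fekete-type maximizers, including separate treatment of the gap straddling the origin; the paper proves exactly such gap estimates in Lemma~\ref{lem: delta_n(p)} and then builds the smeared measure carefully in the proof of Theorem~\ref{lem:limit delta_n(p)}. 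Also, the inequality you need at this stage is the extremal characterisation of the Ullman measure for the functional $\mathscr J_p$, i.e.\ Lemma~\ref{lem:J_p_maximizer}, not the Euler--Lagrange identity of Lemma~\ref{lem:mhaskar-saff} by itself.

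On the \emph{lower} bound, the proposal to take Gauss--Lobatto nodes "warped via the distribution function associated with $h_p$" and then "evaluate its Vandermonde exactly via Lemma~\ref{lem:vandermonde gauss-lobatto}" does not work: the exact Vandermonde formula holds for the \emph{unwarped} Chebyshev nodes, whose empirical measure converges to the arcsine law, not the Ullman law for finite $p$. After the warping the determinant has no closed form, so you would still need an asymptotic lower bound on the Vandermonde at the warped nodes, which is precisely the nontrivial content you were trying to avoid. The paper sidesteps this by establishing the lower bound $\log\Delta(p)\ge\int\int h_ph_p\log|x-y|-\tfrac1p\log\E|\mathbb U|^p$ via the strong law of large numbers for $U$-statistics applied to i.i.d.\ Ullman samples (a much cheaper route), and then uses the monotonicity $\Delta_{n+1}(p)\le\Delta_n(p)$ (Lemma~\ref{lem: convergence delta_n(p)}) to guarantee the limit exists; for $I_{n,\beta,p}$ itself, the lower bound integrates over a small box around the implicit maximizers using the gap estimates, with no explicit node construction at all. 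In summary: the plan is the right shape, but the $p<\infty$ upper-bound regularisation needs the gap lemma that you have not proved, and the lower-bound construction as stated is invalid and should be replaced by a law-of-large-numbers argument plus the monotonicity of $\Delta_n(p)$.
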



The proof of Theorem \ref{thm:VolumeAsymptotics} develops further the ideas from the paper by Saint Raymond \cite{R1984}, who computed the asymptotic volume behaviour of unit balls of Schatten classes without specifying a quantity similar to $\Delta(p)$ for his setting of non self-adjoint matrices. However, the argument needs a careful adaption to our set-up, and for completeness we include all details. We would also like to emphasize that, as the theorem shows, contrary to Saint Raymond we are able to compute the quantity $\Delta(p)$ explicitly for the case of matrix ensembles $\mathscr H_n(\mathbb{F}_\beta)$.  To keep the focus on classical matrix ensembles, we shall present the computation of $\Delta(p)$ in Saint Raymond's setting of non-self-adjoint Schatten $p$-classes in a parallel paper.

\subsection{Asymptotic behaviour of $I_{n,\beta,\infty}$}

We start by determining the asymptotic behaviour of the quantity $I_{n,\beta,\infty}$. Note that its definition in~\eqref{eq:I_n_beta_p} is meaningful for an arbitrary $\beta>0$.

\begin{lemma}\label{lem:asymptotic I infinity}
For any $\beta \in (0,\infty)$, we have
\[
\lim_{n\to\infty}I_{n,\beta,\infty}^{2/n^2} = \frac{1}{2^\beta}.
\]
\end{lemma}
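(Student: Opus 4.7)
The plan is to establish matching upper and lower bounds for $I_{n,\beta,\infty}^{2/n^2}$ using tools from classical potential theory, specifically the transfinite diameter of $[-1,1]$ (equal to $\tfrac12$, a quarter of its length) and the explicit Vandermonde evaluation at the Gauss--Lobatto Chebychev nodes provided by Lemma~\ref{lem:vandermonde gauss-lobatto}.

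For the upper bound I would bound the integrand pointwise. By the very definition of the $n$-diameter in Section~\ref{subsec:fekete}, every point $(\lambda_1,\ldots,\lambda_n)\in[-1,1]^n$ satisfies $\prod_{1\leq i<j\leq n}|\lambda_j-\lambda_i|\leq \delta_n([-1,1])^{n(n-1)/2}$. Raising this to the $\beta$-th power and integrating over the cube of volume $2^n$ yields $I_{n,\beta,\infty}\leq 2^n\,\delta_n([-1,1])^{\beta n(n-1)/2}$. Taking the $(2/n^2)$-th root and using $\delta_n([-1,1])\to \tfrac12$ gives $\limsup_{n\to\infty} I_{n,\beta,\infty}^{2/n^2}\leq 2^{-\beta}$.

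For the lower bound I would restrict the integration to a small rectangular neighborhood of the Gauss--Lobatto Chebychev nodes $\widetilde{t}_1<\cdots<\widetilde{t}_n$ from Section~\ref{subsec:gauss-lobatto}. Pick $\varepsilon_n$ decaying polynomially fast (for concreteness $\varepsilon_n=n^{-6}$) and set $R_n:=\prod_{k=1}^n[\widetilde{t}_k-\varepsilon_n,\widetilde{t}_k+\varepsilon_n]$. Since consecutive GLC nodes are separated by at least $c/n^2$ for some absolute $c>0$, for $n$ large the boxes are pairwise disjoint and any $(\lambda_1,\ldots,\lambda_n)\in R_n$ satisfies $|\lambda_j-\lambda_i|\geq |\widetilde{t}_j-\widetilde{t}_i|-2\varepsilon_n>0$. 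Hence
\[
I_{n,\beta,\infty}\;\geq\;(2\varepsilon_n)^n\,\prod_{1\leq i<j\leq n}|\widetilde{t}_j-\widetilde{t}_i|^\beta\,\cdot\,\prod_{1\leq i<j\leq n}\Bigl(1-\tfrac{2\varepsilon_n}{|\widetilde{t}_j-\widetilde{t}_i|}\Bigr)^{\!\beta}.
\]
The middle product is evaluated via Lemma~\ref{lem:vandermonde gauss-lobatto} to $\bigl(2^{n+1-n^2/2}(n-1)^{n/2}\bigr)^\beta$, whose $(2/n^2)$-th root converges precisely to $2^{-\beta}$; the prefactor $(2\varepsilon_n)^n$ contributes a factor tending to $1$; and using $|\widetilde{t}_j-\widetilde{t}_i|\geq c/n^2$ together with $\log(1-x)\geq -2x$ for $x\leq 1/2$, one bounds the logarithm of the correction product by $O(n^4\varepsilon_n)$, which after multiplication by $2/n^2$ tends to $0$. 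Combining these three contributions gives $\liminf_{n\to\infty} I_{n,\beta,\infty}^{2/n^2}\geq 2^{-\beta}$, matching the upper bound.

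The main obstacle is the calibration of $\varepsilon_n$. The correction product has $\binom{n}{2}$ factors, each of the form $1-O(\varepsilon_n n^2)$ in the worst case (near the endpoints of $[-1,1]$, where GLC node gaps are of order $n^{-2}$), so it shrinks too rapidly if $\varepsilon_n$ is taken too large; simultaneously, the prefactor $(2\varepsilon_n)^n$ must survive the $(2/n^2)$-th root, which rules out exponential decay. Polynomial decay at rate $n^{-6}$ sits comfortably in this window and renders both error contributions negligible after extraction of the $(2/n^2)$-th root.
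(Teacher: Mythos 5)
Your proof follows essentially the same strategy as the paper's: the upper bound via the $n$-diameter of $[-1,1]$ is identical, and the lower bound restricts the integral to a small box around the Gauss--Lobatto Chebychev nodes and invokes Lemma~\ref{lem:vandermonde gauss-lobatto}. The only substantive difference is in the calibration of the box: the paper takes neighborhoods of width $\varepsilon m$ (with $m$ the minimal gap and $\varepsilon = \tfrac{1}{2(n+1)}$), which yields the uniform multiplicative bound $|t_i-t_j|\geq|\widetilde t_i-\widetilde t_j|(1-2\varepsilon)$ and a closed-form correction $(1-2\varepsilon)^{\beta n(n-1)/2}$, while you use a fixed polynomial width $\varepsilon_n = n^{-6}$ and then estimate $\sum_{i<j} 2\varepsilon_n/|\widetilde t_j-\widetilde t_i|$. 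Both calibrations land in the same window and the corrections vanish after extracting the $(2/n^2)$-th root, so this is a cosmetic rather than a structural difference.

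There is one genuine (but easily repaired) flaw: the box $R_n = \prod_{k=1}^n[\widetilde t_k - \varepsilon_n, \widetilde t_k + \varepsilon_n]$ is \emph{not} contained in $\B_\infty^n=[-1,1]^n$, because $\widetilde t_1 = -1$ and $\widetilde t_n = +1$, so the first and last factors of $R_n$ stick out of the cube. As written, the inequality $I_{n,\beta,\infty} \geq \int_{R_n}\varphi_\beta$ is therefore unjustified. The fix is exactly what the paper does: take one-sided neighborhoods $[\widetilde t_1, \widetilde t_1+\varepsilon_n]$ and $[\widetilde t_n - \varepsilon_n, \widetilde t_n]$ at the two endpoints. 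This replaces the prefactor $(2\varepsilon_n)^n$ by $2^{n-2}\varepsilon_n^n$, whose $(2/n^2)$-th root still tends to $1$, so your conclusion is unaffected once this is corrected.
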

\begin{proof}
For some positive integer $k\in\N$, we consider the $k$-diameter of $[-1,1]$, i.e.,
\[
\delta_k = \sup_{t_1,\dots,t_k\in[-1,1]} \bigg(\prod_{1\leq i < j \leq k} |t_j-t_i|\bigg)^{\frac{2}{k(k-1)}}.
\]
It follows from the discussion in Subsection \ref{subsec:fekete} that, as $k\to\infty$,
\begin{align}\label{eq:convergence delta_k}
\delta_k \downarrow \frac{1}{2},
\end{align}
meaning that $\delta_k$ converges to $1/2$ \textit{from above}, as $k\to\infty$.
For $n\in\N$, we can thus estimate
\begin{align*}
I_{n,\beta,\infty} &  = \int_{\B_\infty^n} \prod_{1\leq i < j\leq n}|\lambda_j-\lambda_i|^\beta \,\dint \lambda_1 \ldots \dint \lambda_n \\
& \leq \int_{\B_\infty^n} \Big(\delta_n^{\frac{n(n-1)}{2}}\Big)^\beta \,\dint \lambda_1 \ldots \dint \lambda_n
 = \vol_{n}(\B_\infty^n)\, \delta_n^{\beta\frac{n(n-1)}{2}}
 = 2^n\, \delta_n^{\beta\frac{n(n-1)}{2}}.
\end{align*}
We conclude from \eqref{eq:convergence delta_k} that
\[
\limsup_{n\to\infty}\, I_{n,\beta,\infty}^{2/n^2} \leq \limsup_{n\to\infty}\, 2^{\frac{2}{n}} \Big(\delta_n^{\frac{n(n-1)}{n^2}}\Big)^{\beta} = \frac{1}{2^\beta}.
\]
We now proceed with the lower bound. To this end, we want to approximate the optimal choice of points in $[-1,1]$ that determine $\delta_n$ by setting
\[
\widetilde{t}_j:= 2\sin^2\bigg(\frac{(j-1)\pi}{2(n-1)}\bigg)-1 = -\cos\bigg(\frac{(j-1)\pi}{n-1}\bigg),\qquad j=1,\dots,n,
\]
i.e., by taking the Gauss-Lobatto Chebychev nodes.
Note that $-1=\widetilde{t}_1 \leq \ldots\leq \widetilde{t}_n=1$ and so this variation (compared to Saint Raymond \cite{R1984}) takes into account that in our case the $t_j$'s come from $[-1,1]$ rather than from $[0,1]$. By Lemma \ref{lem:vandermonde gauss-lobatto}, we have the identity
\begin{align}\label{eq: idendity product cos difference}
\prod_{1\leq i < j \leq n} (\,\widetilde{t}_j-\widetilde{t}_i) = 2^{{n(n-1)\over 2}+1}\bigg(\frac{2(n-1)}{4^{n-1}}\bigg)^{\frac{n}{2}}.
\end{align}
We further observe that 
\begin{align}\label{ineq: m lower bound}
m:= \inf_{i\neq j}|\,\widetilde{t}_i-\widetilde{t}_j|  =  2\sin^2{\pi\over 2(n-1)}\geq \frac{2}{(n-1)^2}.
\end{align}
The equality in the previous display follows from the following consideration. First, since $\cos(\,\cdot\,)$ is strictly decreasing on $[0,\pi/2]$ (which is enough to consider because of the symmetry), the minimum must be attained for two neighboring points. Now, since the modulus of the derivative of $\cos(\,\cdot\,)$ is increasing on $[0,\pi/2]$, it follows that the minimum must be attained for $i=1$ and $j=2$. 

For $\varepsilon\in (0,1/2)$, let us now consider small $\varepsilon m$-neighborhoods of the points $\widetilde{t}_1,\dots,\widetilde{t}_n$. More precisely, we consider the one-sided neighborhoods
\[
t_1\in[\,\widetilde{t}_1,\widetilde{t}_1+\varepsilon m] \quad\text{and}\quad t_n\in[\,\widetilde{t}_n-\varepsilon m,\widetilde{t}_n]
\]
at the two extremal points and for $j\in\{2,\dots,n-1\}$ the two-sided ones
\[
t_j\in [\,\widetilde{t}_j-\varepsilon m, \widetilde{t}_j + \varepsilon m].
\]
Note that these intervals are disjoint. If we write
\[
Q:= [\,\widetilde{t}_1,\widetilde{t}_1+\varepsilon m]\times \left(\prod_{j=2}^{n-1} [\,\widetilde{t}_j-\varepsilon m, \widetilde{t}_j + \varepsilon m]\right) \times [\,\widetilde{t}_n-\varepsilon m,\widetilde{t}_n] ,
\]
then
\begin{align}\label{eq: volume Q}
\vol_{n}(Q)= (\varepsilon m)^2\cdot(2\varepsilon m)^{n-2} = 2^{n-2}(\varepsilon m)^n.
\end{align}
Let us define
\[
\varphi_\beta:[-1,1]^n\to\R,\quad \varphi_\beta(t_1,\dots,t_n):= \prod_{1\leq i < j\leq n}|t_j-t_i|^\beta.
\]
The idea is to estimate the integral over $\varphi_\beta$ on $\B_\infty^n$ from below essentially by the values of $\varphi_\beta$ at the points $(t_1,\dots,t_n)$ in the $n$-dimensional box $Q$. To this end, we first observe that for $(t_1,\dots,t_n)\in Q$,
\begin{align}\label{ineq: inequality t_i-t_j}
|t_i-t_j| \geq |\,\widetilde{t}_i-\widetilde{t}_j|-2\varepsilon m \geq |\,\widetilde{t}_i - \widetilde{t}_j|(1-2\varepsilon),
\quad i\neq j,
\end{align}
where we have used that $m\leq |\widetilde{t}_i-\widetilde{t}_j|$. Therefore, using \eqref{ineq: inequality t_i-t_j} we obtain for all $(t_1,\ldots,t_n)\in Q$ the estimate
\begin{align}\label{ineq: phi on box Q}
\varphi_\beta(t_1,\dots,t_n) \geq (1-2\varepsilon)^{\beta\frac{n(n-1)}{2}} \varphi_\beta\big(\,\widetilde{t}_1,\dots,\widetilde{t}_n\big).
\end{align}
Putting things together, we obtain from \eqref{ineq: phi on box Q} that
\begin{align*}
I_{n,\beta,\infty}^{2/n^2} & = \bigg(\int_{\B_\infty^n} \varphi_\beta(t_1,\dots,t_n)\,\dint t_1 \ldots \dint t_n\bigg)^{\frac{2}{n^2}} \cr
& \geq \bigg(\int_{Q} \varphi_\beta(t_1,\dots,t_n)\,\dint t_1 \ldots \dint t_n\bigg)^{\frac{2}{n^2}} \cr
& \geq \vol_n(Q)^{\frac{2}{n^2}}\cdot \Big[(1-2\varepsilon)^{\beta\frac{n(n-1)}{2}} \varphi_\beta\big(\,\widetilde{t}_1,\dots,\widetilde{t}_n\big)\Big]^{\frac{2}{n^2}}.
\end{align*}
Choosing $\varepsilon:=\frac{1}{2(n+1)}$ and using \eqref{ineq: m lower bound} together with \eqref{eq: volume Q}, we obtain
\begin{align*}
I_{n,\beta,\infty}^{2/n^2} &\geq \Big[2^{n-2}(\varepsilon m)^n\Big]^{\frac{2}{n^2}}\cdot \bigg[\Big(1-\frac{1}{n+1}\Big)^{\beta\frac{n(n-1)}{2}} \varphi_\beta\big(\,\widetilde{t}_1,\dots,\widetilde{t}_n\big)\bigg]^{\frac{2}{n^2}} \cr
& = \bigg[2^{n-2}\bigg(\frac{1}{(n+1)(n-1)^2}\bigg)^n\bigg]^{\frac{2}{n^2}}\cdot \Big(1-\frac{1}{n+1}\Big)^{\beta\frac{n-1}{n}} \varphi_\beta\big(\,\widetilde{t}_1,\dots,\widetilde{t}_n\big)^{\frac{2}{n^2}}.
\end{align*}
We now observe that, using the identity \eqref{eq: idendity product cos difference}, we have
\[
\varphi_\beta\big(\,\widetilde{t}_1,\dots,\widetilde{t}_n\big)^{\frac{2}{n^2}}
=
\bigg[2^{n + 1 - \frac 12 n^2} (n-1)^{\frac{n}{2}}\bigg]^{\beta \frac{2}{n^2}}
=
2^{\frac{\beta}{n^2} (-n^2 + 2n + 2)} (n-1)^{\frac{\beta}{n}}.
\]
Altogether, we obtain
\[
\liminf_{n\to\infty} I_{n,\beta,\infty}^{2/n^2} \geq \frac{1}{2^\beta},
\]
which completes the proof.
\end{proof}

\subsection{Asymptotic behavior of the constants $c_{n,\beta}$}

Next, we consider the asymptotic behavior of the constants $c_{n,\beta}$ defined in \eqref{eq:DefConstantcnbeta}, as $n\to\infty$. Again, we treat the case of a general parameter $\beta\in(0,\infty)$.

\begin{lemma}\label{lem:AsymptoticConstant}
For any $\beta\in (0,\infty)$, we have
\[
c_{n,\beta}^{2/n^2}\asymptequiv n^{-\frac{\beta}{2}}\Big({4\pi\over \beta}\Big)^{\frac{\beta}{2}}e^{\frac{3\beta}{4}}\,.
\]
\end{lemma}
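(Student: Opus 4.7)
I would take logarithms in the definition \eqref{eq:DefConstantcnbeta} and identify which contributions are of order $n^2$, since after multiplication by $2/n^2$ all other terms collapse into an $o(1)$ error. Expanding,
$$\log c_{n,\beta}=-\log n!-\log\frac{2\pi^{\beta/2}}{\Gamma(\beta/2)}+\sum_{k=1}^{n}\left[\log 2+\frac{\beta k}{2}\log(2\pi)-\frac{\beta}{2}\log 2-\log\Gamma\!\left(\frac{\beta k}{2}\right)\right];$$
the prefactor is $O(1)$, the factorial contribution $-\log n!$ is $O(n\log n)$, and the flat $\log 2$-terms are $O(n)$, all negligible. The real work lies in the sum involving the gamma factor.

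For the gamma factor I would invoke Stirling's formula
$$\log\Gamma(x)=x\log x-x-\tfrac{1}{2}\log x+\tfrac{1}{2}\log(2\pi)+O(1/x)$$
with $x=\beta k/2$ to get
$$\log\Gamma(\beta k/2)=\tfrac{\beta k}{2}\log k+\tfrac{\beta k}{2}\log\tfrac{\beta}{2}-\tfrac{\beta k}{2}+O(\log k).$$
Combining this with the Euler--Maclaurin asymptotics
$$\sum_{k=1}^{n} k\log k=\tfrac{n^2}{2}\log n-\tfrac{n^2}{4}+O(n\log n),\qquad \sum_{k=1}^{n} k=\tfrac{n^2}{2}+O(n),$$
yields
$$\sum_{k=1}^{n}\log\Gamma(\beta k/2)=\tfrac{\beta n^2}{4}\log n-\tfrac{\beta n^2}{8}+\tfrac{\beta n^2}{4}\log\tfrac{\beta}{2}-\tfrac{\beta n^2}{4}+O(n\log n),$$
while the remaining $n^2$-order contribution comes from $\tfrac{\beta}{2}\log(2\pi)\sum_{k=1}^{n} k=\tfrac{\beta n^2}{4}\log(2\pi)+O(n)$. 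Assembling these gives
$$\log c_{n,\beta}=\tfrac{\beta n^2}{4}\Big[\log\tfrac{4\pi}{\beta}-\log n+\tfrac{3}{2}\Big]+O(n\log n),$$
and dividing by $n^2/2$ and exponentiating produces the claimed asymptotic.

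\textbf{Main obstacle.} The argument is essentially bookkeeping, and the only subtle points are two. First, one must verify that every non-leading term is genuinely $o(n^2)$: the Stirling error $O(1/k)$ sums to $O(\log n)$, and the $-\tfrac{1}{2}\log(\beta k/2)$ and $\tfrac{1}{2}\log(2\pi)$ terms from Stirling together contribute only $O(n\log n)$. Second, one must correctly combine the two Stirling contributions $+\tfrac{\beta n^2}{8}$ and $+\tfrac{\beta n^2}{4}$, arising, respectively, from $-\sum x\log x$ and $+\sum x$, to produce the constant $\tfrac{3\beta n^2}{8}$ which exponentiates to the factor $e^{3\beta/4}$ in the statement.
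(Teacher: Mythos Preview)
Your proposal is correct and follows essentially the same route as the paper: take logarithms, apply Stirling to $\log\Gamma(\beta k/2)$, and isolate the $n^2$-order contributions. The only cosmetic difference is that the paper evaluates $\sum_{k=1}^n \frac{\beta k}{2}\log\frac{\beta k}{2}$ via Abel partial summation rather than quoting the Euler--Maclaurin asymptotic for $\sum k\log k$ directly, but the computations and the resulting constants are identical.
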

\begin{proof}
Using that $\log\Gamma(z)=z\log z-z+o(z)$ and $\log n!= o(n^2)$, as $z,n\to\infty$, we obtain
\begin{align*}
\log c_{n,\beta} &= \sum_{k=1}^n\Big(\log 2+{\beta k\over 2}\log(2\pi)-{\beta\over 2}\log 2-\log\Gamma\Big({\beta k\over 2}\Big)\Big) - \log n! - n\log{2\pi^{\beta/2}\over\Gamma\big({\beta\over 2}\big)}\\
&=\sum_{k=1}^n\Big({\beta k\over 2}\log(2\pi)- {\beta k\over 2}\log{\beta k\over 2}+{\beta k\over 2}+o(k)\Big) + o(n^2)\\
&=-\sum_{k=1}^n{\beta k\over 2}\log{\beta k\over 2}+{n^2\beta\over 4}\log(2\pi)+{n^2\beta\over 4} +o(n^2).
\end{align*}
Next we determine the large $n$ behaviour of the sum $\sum_{k=1}^n{\beta k\over 2}\log{\beta k\over 2}$. Using Abel's partial summation formula
\[
\sum_{k=1}^na_kb_k=A_nb_n-\sum_{k=1}^{n-1}A_k(b_{k+1}-b_k),\qquad A_k:=a_1+\ldots+a_k,
\]
with the choices $a_k=\beta k/2$ and $b_k=\log(\beta k/2)$, we see that
\begin{align*}
\sum_{k=1}^n{\beta k\over 2}\log{\beta k\over 2} &= {\beta n(n+1)\over 4}\log{\beta n\over 2}-\sum_{k=1}^{n-1}{\beta k(k+1)\over 4}\Big(\log{\beta(k+1)\over 2}-\log{\beta k\over 2}\Big)\\
&={\beta n^2\over 4}\log{\beta n\over 2}+{\beta n\over 4}\log{\beta n\over 2}-\sum_{k=1}^{n-1}{\beta(k+1)\over 4}+\sum_{k=1}^{n-1}{\beta k(k+1)\over 4}\Big({1\over k}-\log\Big(1+{1\over k}\Big)\Big)\\
&={\beta n^2\over 4}\log{\beta n\over 2}-{\beta n^2\over 8}+O(n\log n),
\end{align*}
since $\sum_{k=1}^{n-1}{\beta(k+1)\over 4}={\beta\over 8}(n^2+n-2)$ and $\frac 1k - \log (1+\frac 1k) =O(\frac 1 {k^2})$. Inserting this into the above expression for $\log c_{n,\beta}$, we arrive at
\begin{align*}
\log c_{n,\beta} &= -{\beta n^2\over 4}\log{\beta n\over 2}+n^2\Big({\beta\over 4}\log(2\pi)+{\beta\over 4}+{\beta\over 8}\Big)+o(n^2)\\
&=-{\beta n^2\over 4}\log n+n^2\Big({3\beta\over 8}+{\beta\over 4}\Big(\log(2\pi)-\log{\beta\over 2}\Big)\Big)+o(n^2)\\
&=-{\beta n^2\over 4}\log n+n^2\Big({3\beta\over 8}+{\beta\over 4}\log{4\pi\over\beta}\Big)+o(n^2).
\end{align*}
Thus,
\begin{align*}
\log c_{n,\beta}^{2/n^2} &= {2\over n^2}\Big(-{\beta n^2\over 4}\log n+n^2\Big({3\beta\over 8}+{\beta\over 4}\log{4\pi\over\beta}\Big)+o(n^2)\Big)\\
&=-{\beta\over 2}\log n+{3\beta\over 4}+\log\Big({4\pi\over\beta}\Big)^{\beta/2}+o(1)
\end{align*}
and we conclude that
\begin{align*}
c_{n,\beta}^{2/n^2} &\asymptequiv \exp\Big(-\frac \beta 2 \log n + {3\beta\over 4}+\log\Big({4\pi\over\beta}\Big)^{\beta/2}\Big)
=n^{-\beta/2}\Big({4\pi\over\beta}\Big)^{\beta/2}e^{3\beta/4}.
\end{align*}
This proves the claim.
\end{proof}

\subsection{The parameter $\Delta_n(p)$ and its asymptotic behavior}

If not specified otherwise, in this subsection we shall always assume that $0 < p <\infty$ . Let us define the quantity
\begin{align}\label{eq:def delta_n(p)}
\Delta_n(p) := \sup_{(t_1,\dots,t_n)\in\R^n\setminus\{0\}}\,n^{1/p}\frac{\bigg(\prod\limits_{1\leq i < j\leq n}|t_i-t_j|\bigg)^{\frac{2}{n(n-1)}}}{\Big(\sum\limits_{i=1}^n|t_i|^p\Big)^{\frac{1}{p}}}.
\end{align}
We first prove that a  maximizer for $\Delta_n(p)$ exists in the unit $\ell_p^n$-sphere and that the gaps between the consecutive $t_i$'s are not `too small'. Saint Raymond~\cite{R1984} has a similar result for the supremum taken over positive $t_i$'s, but in our two-sided setting additional difficulties appear. In particular, we are not able to estimate the size of the gap containing $0$ and shall treat it separately.

\begin{lemma}\label{lem: delta_n(p)}
For $n\geq 3$ there exist $t_{1,n}^*,\dots,t_{n,n}^*\in\R$ with $t_{1,n}^*< \ldots < t_{n,n}^*$, $\|(t_{1,n}^*,\dots,t_{n,n}^*) \|_p=1$, and
\[
\prod_{1\leq i < j \leq n}|t_{i,n}^*-t_{j,n}^*| = \bigg(\frac{\Delta_n(p)}{n^{\frac{1}{p}}}\bigg)^{\frac{n(n-1)}{2}}
\]
such that
\[
m_1 := \inf_{k_0 \leq i < j\leq n} |t_{j,n}^*-t_{i,n}^*| \geq n^{-2-\frac{2}{p}}\qquad\text{and}\qquad m_2 := \inf_{1 \leq i < j\leq k_0-1} |t_{j,n}^*-t_{i,n}^*| \geq n^{-2-\frac{2}{p}},
\]
where $k_0\in\{2,\dots,n\}$ is chosen in such a way that $t_{k_0,n}^*\geq 0$ and $t_{k_0-1,n}^*<0$.
\end{lemma}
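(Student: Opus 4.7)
The plan is to establish existence of a maximizer by compactness, exploit translation invariance of the numerator to force both signs among the coordinates, and then derive the two gap bounds by a perturbation argument.

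\textbf{Existence and the sign change.} Both numerator and denominator of the ratio in \eqref{eq:def delta_n(p)} are positively $1$-homogeneous in $(t_1,\dots,t_n)$, so the supremum may be restricted to the compact unit sphere $\SSS_p^{n-1}$, on which the Vandermonde product $\prod_{i<j}|t_i-t_j|$ is continuous and therefore attains its maximum. Any such maximizer has pairwise distinct entries (otherwise the product vanishes while $\Delta_n(p)>0$, as is witnessed by any $n$ distinct points on $\SSS_p^{n-1}$); relabelling, we may assume $t_{1,n}^*<\dots<t_{n,n}^*$. The Vandermonde is invariant under $t\mapsto t+c\mathbf{1}$, whereas $\|\cdot\|_p$ is not, so we may replace the maximizer by the translate that additionally minimizes the $\ell_p$-(quasi-)norm; such a translate exists since $c\mapsto\|t-c\mathbf{1}\|_p$ is continuous and coercive, and its minimum lies in $[\min_it_{i,n}^*,\max_it_{i,n}^*]$. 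After this translation the coordinates straddle $0$, which together with the distinctness from the first step yields a unique $k_0\in\{2,\dots,n\}$ with $t_{k_0-1,n}^*<0\leq t_{k_0,n}^*$.

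\textbf{Gap bounds.} By the symmetry $t\mapsto -t$ it suffices to prove $m_1\geq n^{-2-2/p}$. The first-order optimality condition for maximizing $\sum_{i<j}\log|t_i-t_j|$ on $\SSS_p^{n-1}$ gives, after identifying the Lagrange multiplier by pairing with $t^*$ and summing (which uses $\sum_i t_i^*\sum_{j\neq i}(t_i^*-t_j^*)^{-1}=\binom n2$), the equilibrium identity
\[
\sum_{j\neq i}\frac{1}{t_{i,n}^*-t_{j,n}^*}=\frac{n(n-1)}{2}\,|t_{i,n}^*|^{p-1}\sgn(t_{i,n}^*),\qquad i=1,\dots,n.
\]
Suppose for contradiction that some consecutive pair with $k_0\leq i<i+1\leq n$ satisfies $m:=t_{i+1,n}^*-t_{i,n}^*<n^{-2-2/p}$. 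Consider the symmetric spread $t_i^*\mapsto t_i^*-\varepsilon$, $t_{i+1}^*\mapsto t_{i+1}^*+\varepsilon$, all other coordinates fixed, followed by renormalization back to $\SSS_p^{n-1}$. A Taylor expansion in $\varepsilon$ splits the change of $\log\prod_{j<k}|t_j-t_k|$ into a gain of order $\varepsilon/m$ from opening up the small gap, a cross term of order $\varepsilon m\sum_{j\neq i,i+1}[(t_{i,n}^*-t_{j,n}^*)(t_{i+1,n}^*-t_{j,n}^*)]^{-1}$ from the remaining $n-2$ pairs, and a renormalization loss of order $\varepsilon n^2|t_{i+1,n}^*|^{p-1}$. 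Combining $|t_{j,n}^*|\leq 1$ with the equilibrium identity at indices $i$ and $i+1$ controls both the cross term and the renormalization loss; balancing against the leading $\varepsilon/m$ gain shows that the net change is strictly positive whenever $m<n^{-2-2/p}$, contradicting the maximality of the configuration. The bound on $m_2$ is obtained by the same argument applied to the negative block.

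\textbf{Main obstacle.} The serious difficulty is the quantitative balancing of the perturbation step: the gain of order $1/m$ from the single small gap must be shown to dominate both the renormalization loss, which is distributed over all $n(n-1)/2$ pairs, and the cross-interactions with the remaining $n-2$ coordinates, uniformly in $i$ and $p$. The factor $|t_{i+1,n}^*|^{p-1}$ is particularly delicate when $p<1$ and $t_{i+1,n}^*$ is close to zero; this is precisely the extra subtlety, flagged in the statement of the lemma, that does not arise in Saint Raymond's one-sided analysis \cite{R1984}, and is also the reason why we can only bound gaps \emph{within} each of the two sign blocks, not the gap straddling $0$.
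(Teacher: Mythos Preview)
Your perturbation argument has a genuine gap: the ``gain of order $\eps/m$'' you want to exploit does not survive. Your estimate of the cross term is wrong in a way that destroys the whole scheme. Using the partial-fraction identity
\[
\frac{1}{(t_i^*-t_j^*)(t_{i+1}^*-t_j^*)}=\frac{1}{m}\Bigl(\frac{1}{t_i^*-t_j^*}-\frac{1}{t_{i+1}^*-t_j^*}\Bigr),
\]
one has $\sum_{j\neq i,i+1}[(t_i^*-t_j^*)(t_{i+1}^*-t_j^*)]^{-1}=\frac{1}{m}\bigl(\alpha_i-\alpha_{i+1}+\tfrac{2}{m}\bigr)$, where $\alpha_k=\sum_{j\neq k}(t_k^*-t_j^*)^{-1}$. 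Hence your ``cross term'' is not of order $\eps m$ but equals $-\eps(\alpha_i-\alpha_{i+1})-\tfrac{2\eps}{m}$; the $-2\eps/m$ piece exactly cancels the $+2\eps/m$ gain from opening the gap. What remains at first order is $\eps(\alpha_{i+1}-\alpha_i)$, and this is matched by the first-order change of the $\ell_p$-constraint via the very equilibrium identity you wrote down. In other words, the first-order variation of the scale-invariant functional vanishes at the maximizer, as it must; nothing can be concluded from a first-order perturbation, and at second order the sign goes the wrong way. So the argument, as stated, cannot produce a contradiction for any $m>0$.

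The paper does not argue by perturbation. Instead it sums the Lagrange identities from $k$ to $n$: on the one hand $\sum_{i\geq k}\alpha_i\leq\frac{n(n-1)}{2}\sum_{i>k_0}|t_i^*|^{p-1}$; on the other hand the off-diagonal cancellations give $\sum_{i\geq k}\alpha_i\geq\sum_{i\geq k}(t_i^*-t_{k-1}^*)^{-1}\geq (t_k^*-t_{k-1}^*)^{-1}$. The sum $\sum_{i>k_0}|t_i^*|^{p-1}$ is then controlled by H\"older against $\sum_{i>k_0}1/t_i^*$, and feeding these two bounds into each other yields $\sum_{i>k_0}1/t_i^*\leq n^{2(p+1)/p}$ and hence $\sum_{i>k_0}|t_i^*|^{p-1}\leq n^{2/p}$, from which $(t_k^*-t_{k-1}^*)^{-1}\leq n^{2+2/p}$. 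This direct route avoids the cancellation that kills your approach. (A smaller point: your translation argument for the sign change is not quite airtight when $p<1$, since $c\mapsto\|t-c\mathbf 1\|_p^p$ has cusps at the $t_i$ and can attain its minimum with one coordinate equal to zero; the paper handles this with an explicit comparison configuration.)
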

\begin{proof}
It follows from compactness and the homogeneity that there exist $t_{1,n}^*,\dots,t_{n,n}^*\in\R$ such that
\[
\|(t_{1,n}^*,\dots,t_{n,n}^*) \|_p=1 \qquad\text{and}\qquad \prod_{1\leq i < j \leq n}|t_{i,n}^*-t_{j,n}^*| = \bigg(\frac{\Delta_n(p)}{n^{\frac{1}{p}}}\bigg)^{\frac{n(n-1)}{2}}.
\]
To simplify the notation, we shall write $t_i^*$ instead of $t_{i,n}^*$ until the end of this proof.
By permuting the entries, we can assume that $t_1^*\leq \ldots \leq t_n^*$. Since the product is zero if $t_i^*=t_j^*$, we may even assume that $t_1^*< \ldots < t_n^*$. Let us show that there are both positive and negative terms among the $t_{i}^*$'s. Indeed, if all terms were strictly positive, then we could consider the $n$-tuple $(0, t_{2}^*-t_{1}^*,\ldots,t_{n}^*-t_{1}^*)$, which would lead to the same Vandermonde determinant but also to a strictly smaller $\ell^p$-norm, a contradiction. If all terms were strictly positive except for $t_1^*=0$, then we could consider the $n$-tuple $(-t_n^*, t_1^*,\ldots,t_{n-1}^*)$ which has the same $\ell^p$-norm but a larger Vandermonde determinant than the original $n$-tuple, again a contradiction. So, there are strictly negative elements among the $t_i^*$'s. Similarly, there are strictly positive ones.

It is left to prove the estimates for $m_1$ and $m_2$, which is the most difficult part of the proof.  Due to symmetry, it suffices to bound $m_1$ from below and to note that the same estimate carries over to $m_2$.
We observe that for pairwise distinct $t_1,\dots,t_n\in\R\setminus\{0\}$ and $k\in\{1,\dots,n\}$,
\[
\frac{\partial}{\partial t_k}\sum_{1\leq i < j \leq n}\log|t_i-t_j| = \sum_{i=1\atop i\neq k}^n \frac{1}{t_k-t_i}.
\]
On the other hand,
\[
\frac{\partial}{\partial t_k} \sum_{i=1}^n|t_i|^p = p|t_k|^{p-1}\sgn(t_k),
\]
where for $x\in\R$, $\sgn(x)=-1$ if $x<0$, $\sgn(x)=0$ if $x=0$ and $\sgn(x)=+1$ provided that $x>0$.
We therefore obtain the Lagrange conditions
\[
\alpha_k:=\sum_{i=1\atop i\neq k}^n \frac{1}{t_k^*-t_i^*} = \lambda |t_k^*|^{p-1}\sgn(t_k^*),\qquad k=1,\dots,n,
\]
with some parameter $\lambda\in\R$.
Note that even though the function above is not differentiable at $0$ for $p\leq 1$, this will cause no difficulties  in what follows.  In fact, if $t^*_{k_0}=0$, then we always use the respective Lagrange condition in the form $\alpha_{k_0}t_{k_0}^*=0$, which is trivially fulfilled.
Let us now determine the parameter $\lambda$. To do this, we compute the sum over $k$ of $\alpha_kt_k^*$ in two different ways. First,
\begin{align}\label{eq:computation sum alpha_kt_k - 1}
\sum_{k=1}^n\alpha_kt_k^* = \sum_{k=1}^n \sum_{i=1\atop i\neq k}^n\frac{t_k^*}{t_k^*-t_i^*}
= \sum_{(i,k)\atop i\neq k} \frac{t_k^*}{t_k^*-t_i^*} = \frac{n(n-1)}{2},
\end{align}
where the last equality simply follows because for the pairs $(i,k)$ we obtain $\frac{t_k^*}{t_k^*-t_i^*}$ and for the pairs $(k,i)$ we get $\frac{t_i^*}{t_i^*-t_k^*}$, for which the sum is then equal to $1$. On the other hand, since $\|(t_k^*)_{k=1}^n\|_p=1$,
\begin{align}\label{eq:computation sum alpha_kt_k - 2}
\sum_{k=1}^n\alpha_kt_k^* = \sum_{k=1}^n \lambda |t_k^*|^{p-1}\sgn(t_k^*)t_k^* = \lambda \sum_{k=1}^n |t_k^*|^p = \lambda.
\end{align}
Therefore, from \eqref{eq:computation sum alpha_kt_k - 1} and \eqref{eq:computation sum alpha_kt_k - 2}, we deduce that
\[
\lambda = \frac{n(n-1)}{2}.
\]
We now prove upper and lower bounds for the $k$-truncated sum of the $\alpha_i$'s. We have, for any $k>k_0$,
\begin{align}\label{eq:upper bound alphi from k}
\sum_{i=k}^n \alpha_i = \sum_{i=k}^n\lambda |t_i^*|^{p-1}\sgn(t_i^*) \leq \frac{n(n-1)}{2}\sum_{i=k_0+1}^n|t_i^*|^{p-1}.
\end{align}
For $k\in\{2,\dots,n\}$, we obtain the lower bound
\begin{align}\label{eq:lower bound alphi from k}
\sum_{i=k}^n\alpha_i & = \sum_{i=k}^n \sum_{j=1\atop j\neq i}^n \frac{1}{t_i^*-t_j^*}
 = \sum_{(i,j)\atop j<k\leq i}\frac{1}{t_i^*-t_j^*} + \sum_{(i,j):\,i\neq j\atop k\leq i,\,k\leq j}\frac{1}{t_i^*-t_j^*} \cr
& =  \sum_{(i,j)\atop j<k\leq i}\frac{1}{t_i^*-t_j^*} \geq \sum_{i=k}^n \frac{1}{t_i^*-t_{k-1}^*},
\end{align}
and whenever $k>k_0$, we get
\begin{align}\label{eq:lower bound alphi from k>k_0}
\sum_{i=k}^n\alpha_i & = \sum_{(i,j):\,j<k\leq i}\frac{1}{t_i^*-t_j^*} \geq \sum_{i=k}^n\frac{1}{t_i^*-t_{k_0}^*}.
\end{align}

Before we continue with our estimates, observe that by H\"older's inequality and the fact that $\|(t_1^*,\dots,t_n^*)\|_p=1$,
\begin{align}\label{ineq: hoelder estimate}
\sum_{i=k_0+1}^n |t_i^*|^{p-1} & = \sum_{i=k_0+1} |t_i^*|^{p\frac{p}{p+1}}\bigg(\frac{1}{t_i^*}\bigg)^{\frac{1}{p+1}} \cr
 & \leq \bigg(\sum_{i=k_0+1}^n|t_i^*|^p\bigg)^{\frac{p}{p+1}}\bigg(\sum_{i=k_0+1}^n
\frac{1}{t_i^*}\bigg)^{\frac{1}{p+1}}
 \leq  \bigg(\sum_{i=k_0+1}^n
\frac{1}{t_i^*}\bigg)^{\frac{1}{p+1}}.
\end{align}
We immediately obtain from \eqref{eq:lower bound alphi from k>k_0} together with \eqref{eq:upper bound alphi from k} and \eqref{ineq: hoelder estimate} that, for any $k>k_0$,
\begin{align*}
\sum_{i=k}^n \frac{1}{|t_i^*|}  & = \sum_{i=k}^n \frac{1}{t_i^*}
 \leq \sum_{i=k}^n \frac{1}{t_i^*-t_{k_0}^*}
\leq \frac{n(n-1)}{2} \sum_{i=k_0+1}^n|t_i^*|^{p-1} \cr  &\leq \frac{n(n-1)}{2} \bigg(\sum_{i=k_0+1}^n
\frac{1}{t_i^*}\bigg)^{\frac{1}{p+1}}.
\end{align*}
Using the latter with $k=k_0+1$ and rearranging the resulting inequality, we find that
\begin{align}\label{ineq: sum 1 over t_i^*}
\sum_{i=k_0+1}^n
\frac{1}{t_i^*} \leq n^{\frac{2(p+1)}{p}}\,.
\end{align}
Similarly, using again H\"older's inequality as before and \eqref{ineq: sum 1 over t_i^*}, we obtain
\begin{align}\label{ineq: sum t_i^* with power p-1 upper bound}
\sum_{i=k_0+1}^n |t_i^*|^{p-1} \leq \bigg(\sum_{i=k_0+1}^n
\frac{1}{t_i^*}\bigg)^{\frac{1}{p+1}} \leq n^{\frac{2}{p}}\,.
\end{align}
We are now in the position to estimate from below the parameter $m_1$ (defined in the statement of the lemma and determining the minimal gap between the non-negative elements of the sequence of maximizers). For $k>k_0$ (and $n\geq 3$), using \eqref{eq:lower bound alphi from k} together with \eqref{eq:upper bound alphi from k} and \eqref{ineq: sum t_i^* with power p-1 upper bound},
\begin{align*}
\frac{1}{t^*_k-t^*_{k-1}} & \leq \sum_{i=k}^{n}\frac{1}{t_i^*-t_{k-1}^*} \leq \sum_{i=k}^n \alpha_i \cr & \leq \frac{n(n-1)}{2}\sum_{i=k_0+1}^n |t_i^*|^{p-1}
\leq \frac{n(n-1)}{2}n^{\frac{2}{p}} \leq n^{2+\frac{2}{p}}\,.
\end{align*}
Therefore, for all $k>k_0$, we get $t_k^*-t_{k-1}^* \geq n^{-2-\frac{2}{p}}$ and so
\[
m_1 \geq n^{-2-\frac{2}{p}}\,.
\]
The proof is thus complete.
\end{proof}

Let us now prove the monotonicity of the sequence $(\Delta_n(p))_{n\in\N}$, which at the same time implies its convergence. The value of this limit will be computed in Theorem \ref{lem:limit delta_n(p)} below.

\begin{lemma}\label{lem: convergence delta_n(p)}
As $n\to\infty$, we have the monotone convergence $\Delta_n(p)\,\downarrow\, \Delta(p):= \inf_{n}\Delta_n(p)$.
\end{lemma}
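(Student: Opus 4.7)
The plan is to prove the stronger statement that $\Delta_{n+1}(p)\le \Delta_n(p)$ for all $n\ge 2$; together with $\Delta_n(p)>0$, this immediately yields monotone convergence to $\inf_n \Delta_n(p)$. The strategy is the standard ``remove one point'' argument: starting from an optimal configuration for $\Delta_{n+1}(p)$, we build $n+1$ sub-configurations of length $n$ by deleting one entry at a time, apply the definition of $\Delta_n(p)$ to each of them, and multiply the resulting inequalities.

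Fix $n\ge 2$ and let $(t_1,\dots,t_{n+1})\in\R^{n+1}$ be a maximizer for $\Delta_{n+1}(p)$ normalized so that $\sum_{i=1}^{n+1}|t_i|^p = 1$ (its existence for $n+1\ge 3$ follows from Lemma~\ref{lem: delta_n(p)}; for the single remaining initial case one takes a near-maximizer and passes to a limit). For each $k\in\{1,\dots,n+1\}$ form the reduced tuple $T_k:=(t_i)_{i\ne k}$ and set
\[
P_{n+1}:=\prod_{1\le i<j\le n+1}|t_i-t_j|,\qquad P_k:=\prod_{\substack{i<j\\ i,j\ne k}}|t_i-t_j|,\qquad S_k:=\Bigl(\sum_{i\ne k}|t_i|^p\Bigr)^{1/p}=(1-|t_k|^p)^{1/p}.
\]
The key combinatorial identity is
\[
\prod_{k=1}^{n+1}P_k \;=\; P_{n+1}^{\,n-1},
\]
since each factor $|t_a-t_b|$ (with $a<b$) appears in $P_k$ precisely for the $n-1$ indices $k\notin\{a,b\}$.

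Applying the definition \eqref{eq:def delta_n(p)} of $\Delta_n(p)$ to each of the $n+1$ tuples $T_k$ gives
\[
n^{1/p}\,P_k^{\,2/(n(n-1))} \;\le\; \Delta_n(p)\,S_k\qquad\text{for every }k=1,\dots,n+1.
\]
Taking the product over $k$ and using the identity above, the left-hand side becomes $n^{(n+1)/p}\,P_{n+1}^{\,2/n}$, so that
\[
P_{n+1}^{\,2/n} \;\le\; \Bigl(\tfrac{\Delta_n(p)}{n^{1/p}}\Bigr)^{n+1}\prod_{k=1}^{n+1}S_k.
\]
Now apply the AM--GM inequality to the nonnegative numbers $1-|t_k|^p$, whose sum equals $(n+1)-1=n$:
\[
\Bigl(\prod_{k=1}^{n+1}S_k\Bigr)^{1/(n+1)} \;=\;\Bigl(\prod_{k=1}^{n+1}(1-|t_k|^p)\Bigr)^{1/(p(n+1))} \;\le\; \Bigl(\tfrac{n}{n+1}\Bigr)^{1/p}.
\]
Taking $(n+1)$-th roots of the previous displayed inequality and substituting this estimate yields
\[
P_{n+1}^{\,2/(n(n+1))} \;\le\; \frac{\Delta_n(p)}{n^{1/p}}\cdot\Bigl(\frac{n}{n+1}\Bigr)^{1/p} \;=\; \frac{\Delta_n(p)}{(n+1)^{1/p}}.
\]
Since $(t_1,\dots,t_{n+1})$ attains $\Delta_{n+1}(p)$ and is $\ell^p$-normalized, the left-hand side multiplied by $(n+1)^{1/p}$ equals $\Delta_{n+1}(p)$, giving $\Delta_{n+1}(p)\le \Delta_n(p)$.

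The only subtle point is the exponent bookkeeping: one must correctly balance the factor $n-1$ coming from the combinatorial identity against the exponent $2/(n(n-1))$ in the definition of $\Delta_n(p)$, and the $(n+1)$-th root from AM--GM against the normalization $n^{1/p}$ versus $(n+1)^{1/p}$. Everything else reduces to a clean product over the $n+1$ sub-tuples, which is why no quantitative information from Lemma~\ref{lem: delta_n(p)} beyond existence of a maximizer is required here.
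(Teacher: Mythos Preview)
Your proof is correct and follows essentially the same route as the paper: the ``remove one point'' combinatorial identity $\prod_k P_k = P_{n+1}^{\,n-1}$, followed by applying the definition of $\Delta_n(p)$ to each sub-tuple and then AM--GM on the leftover $\ell^p$-masses. The only cosmetic difference is that the paper runs the argument for an arbitrary $(t_1,\dots,t_{n+1})\in\R^{n+1}\setminus\{0\}$ and takes the supremum at the end, so it never needs to invoke Lemma~\ref{lem: delta_n(p)} for the existence of a maximizer (your parenthetical about a ``remaining initial case'' is in fact unnecessary, since $n\ge 2$ already gives $n+1\ge 3$).
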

\begin{proof}
We prove that $\Delta_n(p)$ is decreasing in $n$.
Let $n\in\N$ and $(t_1,\ldots,t_{n+1}) \in \R^{n+1} \backslash \{0\}$.  Again, as in Lemma \ref{lem: delta_n(p)}, we may assume $t_1\leq t_2 \leq \ldots \leq t_{n+1}$.  We obtain from the arithmetic-geometric-mean inequality that
\begin{align*}
\prod_{1\leq i < j \leq n+1} |t_i-t_j|^{n-1} & = \prod_{k=1}^{n+1}\bigg(\prod_{1\leq i < j \leq n+1\atop{i,j\neq k}}|t_j-t_i|\bigg) \cr
& \leq \prod_{k=1}^{n+1}\left(\bigg(\sum_{i=1\atop i\neq k}^n|t_i|^p\bigg)^{\frac{1}{p}} \Delta_n(p)\,n^{-\frac{1}{p}}\right)^{\frac{n(n-1)}{2}} \cr
& = \bigg(\frac{\Delta_n(p)}{n^{\frac{1}{p}}}\bigg)^{\frac{n(n+1)(n-1)}{2}}\left(\prod_{k=1}^{n+1}\bigg(\sum_{i=1\atop i\neq k}^n|t_i|^p\bigg) \right)^{\frac{n(n-1)}{2p}} \cr
& \leq \bigg(\frac{\Delta_n(p)}{n^{\frac{1}{p}}}\bigg)^{\frac{n(n+1)(n-1)}{2}}\left(\frac{1}{n+1}\sum_{k=1}^{n+1}\sum_{i=1\atop i\neq k}^n|t_i|^p\right)^{\frac{n(n+1)(n-1)}{2p}} \cr
& \leq \bigg(\frac{\Delta_n(p)}{n^{\frac{1}{p}}}\bigg)^{\frac{n(n+1)(n-1)}{2}}\left(\frac{n}{n+1}\sum_{k=1}^{n+1}|t_k|^p\right)^{\frac{n(n+1)(n-1)}{2p}} \cr
& = \bigg(\Delta^p_n(p)\,\frac{1}{n+1} \sum_{k=1}^{n+1}|t_k|^p\bigg)^{\frac{n(n+1)(n-1)}{2p}}.
\end{align*}
Therefore, by rearranging the latter estimate, we obtain
\[
(n+1)^{1/p}\frac{\bigg(\prod\limits_{1\leq i < j\leq n+1}|t_i-t_j|\bigg)^{\frac{2}{n(n+1)}}}{\Big(\sum\limits_{k=1}^{n+1}|t_k|^p\Big)^{\frac{1}{p}}} \leq \Delta_n(p),
\]
which immediately implies $\Delta_{n+1}(p) \leq \Delta_n(p)$. In particular,  $\Delta_n(p)$, $n\geq 2$ converges to $\Delta(p):=\inf_{n}\Delta_n(p)$ from above.

\end{proof}

We shall now determine the limit of the sequence $(\Delta_n(p))_{n\in\N}$, which we denote (as already done in the previous lemma) by $\Delta(p)$. The proof uses results from the theory of logarithmic potentials with external fields, in particular, the extremal properties of the Ullman distribution. We start with a preliminary lemma that is required in our proof of the upper bound. Let us recall that $h_p$ denotes the density of the Ullman distribution $\mathscr U(p)$ with parameter $p$.

\begin{lemma}\label{lem:J_p_maximizer}
Let $p>0$. On the set of probability measures $\mu$ on $\R$ with $\int_{\R} |x|^p \mu(\dint x) <\infty$, excluding the Dirac measure at $0$, we consider the functional
\[
\mathscr J_p(\mu):= \int_{\R} \int_{\R} \log|x-y|\,\mu(\dint x)\,\mu(\dint y) - \frac{1}{p}\log \int_{\R} |x|^p \,\mu(\dint x) \in \R\cup\{-\infty\}.
\]
Then the only maximizers of $\mathscr J_p$ are probability measures $\mu_b^{(p)}$ with densities $\frac{1}{b}h_p(\frac{x}{b})$, $b>0$.
\end{lemma}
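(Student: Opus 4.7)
The plan is to exploit two properties: a dilation invariance of $\mathscr J_p$ and the known extremal characterization of the Ullman distribution among \emph{all} probability measures on $\R$ with finite $p$-th moment (recalled before Lemma \ref{lem:mhaskar-saff}). Concretely, for $a>0$ let $\mu_a$ denote the push-forward of $\mu$ under $x\mapsto ax$. A direct change of variables yields
\[
\int_\R\!\!\int_\R \log|x-y|\,\mu_a(\dint x)\mu_a(\dint y) = \log a + \int_\R\!\!\int_\R \log|x-y|\,\mu(\dint x)\mu(\dint y),
\]
\[
\frac{1}{p}\log\!\int_\R |x|^p\,\mu_a(\dint x) = \log a + \frac{1}{p}\log\!\int_\R |x|^p\,\mu(\dint x),
\]
so the two contributions to $\mathscr J_p(\mu_a)$ cancel and $\mathscr J_p(\mu_a)=\mathscr J_p(\mu)$. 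In particular, the value of $\mathscr J_p$ is constant along the family $\{\mu_b^{(p)}\}_{b>0}$, so it suffices to identify the maximizer up to this one-parameter scaling.

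I would then reduce to a constrained problem by the scaling above: given any admissible $\mu\neq \delta_0$, choose $a>0$ so that $a^p\int|x|^p\,\mu(\dint x)=\E|\mathbb U|^p=\lambda_p/(2p)$. Since $\mathscr J_p(\mu)=\mathscr J_p(\mu_a)$, it is enough to compare $\mu_a$ with $\mathscr U(p)=\mu_1^{(p)}$ among probability measures whose $p$-th moment equals $\lambda_p/(2p)$. For any such measure $\nu$,
\[
\mathscr E_p(\nu)=-\!\int\!\!\int\!\log|x-y|\,\nu(\dint x)\nu(\dint y)+\frac{2}{\lambda_p}\!\int\!|x|^p\,\nu(\dint x)\ge \mathscr E_p(\mathscr U(p)),
\]
with equality only for $\nu=\mathscr U(p)$ (this is the extremal property of $\mathscr U(p)$ and relies on the strict convexity of the logarithmic energy on measures of fixed mass, as in \cite{SaffBOOK}). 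Since the external field term $\frac{2}{\lambda_p}\int|x|^p\,d\nu=\frac{1}{p}$ is the same for $\nu$ and for $\mathscr U(p)$ under our moment constraint, the inequality simplifies, via Lemma \ref{lem:double integral ullman}, to
\[
\int\!\!\int\log|x-y|\,\nu(\dint x)\nu(\dint y)\ \le\ \int_{-1}^{1}\!\!\int_{-1}^{1} h_p(x)h_p(y)\log|x-y|\,\dint x\,\dint y=-\log 2-\frac{1}{2p},
\]
with equality iff $\nu=\mathscr U(p)$.

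Combining the two steps, for every admissible $\mu$,
\[
\mathscr J_p(\mu)=\mathscr J_p(\mu_a)\le -\log 2-\frac{1}{2p}-\frac{1}{p}\log \E|\mathbb U|^p,
\]
and equality holds precisely when $\mu_a=\mathscr U(p)$, i.e.\ when $\mu$ has density $\frac{1}{b}h_p(x/b)$ with $b=1/a>0$. This identifies the maximizers as exactly the rescaled Ullman densities $\mu_b^{(p)}$, and reading off the maximal value recovers the explicit formula for $\Delta(p)$ announced in Theorem \ref{thm:VolumeAsymptotics} after a short manipulation using $\E|\mathbb U|^p=\lambda_p/(2p)$ and $\Gamma((p+2)/2)=(p/2)\Gamma(p/2)$.

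The main obstacle is the uniqueness clause: the Ullman distribution is characterized as the \emph{unique} minimizer of $\mathscr E_p$, and the dilation invariance then promotes this to uniqueness of the maximizer of $\mathscr J_p$ only \emph{up to scale}. All other steps are direct consequences of results already assembled in Subsection~\ref{subsec:ullman-potential theory}; accordingly, I would simply cite the strict-convexity uniqueness statement from \cite{SaffBOOK} rather than reproduce its proof.
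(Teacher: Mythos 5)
Your argument is correct and follows essentially the same route as the paper's: both exploit the scaling invariance of $\mathscr J_p$ to reduce to the fixed-moment constraint $\int|x|^p\,d\nu=\lambda_p/(2p)$, and then invoke the unique-minimizer property of the Ullman distribution for the energy functional $\mathscr E_p$ from \cite{SaffBOOK} to obtain the log-energy inequality with its equality case. The only cosmetic difference is that you spell out the dilation computation and state the inequality in the forward direction, whereas the paper phrases the same step as a contrapositive; the content is identical.
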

\begin{proof}
First, we observe that
\begin{align}\label{eq: J_p in terms of expectation}
\mathscr J_p(\mu) = \E \log|Z-\widetilde Z| - \frac{1}{p}\log \E|Z|^p,
\end{align}
where $Z$ is a random variable with distribution $\mu$ and $\widetilde Z$ is an independent copy of $Z$. In particular, the expression $\mathscr J_p(\mu)$ is invariant under scaling by constants $c\in(0,\infty)$, that is, we may replace $Z$ by $cZ$ without changing the value of the right-hand side in \eqref{eq: J_p in terms of expectation}. Therefore, we can scale in such a way that the $p$-th moment equals the one of an Ullman random variable with parameter $p>0$, i.e.,
\[
\int_{\R} |x|^p \,\mu(\dint x) = \int_{\R}|x|^p h_p(x) \,\dint x = \frac{\lambda_p}{2p}.
\]
The last equality follows from Equation \eqref{eq:p-moment ullman random variable}. Consequently, it is enough to show that among all probability measures $\mu$ on $\R$ with $p$-th absolute moment equal to $\frac{\lambda_p}{2p}$, the Ullman distribution $\mu^{(p)}$ is the only maximizer of the expression
\[
\int_{\R} \int_{\R} \log|x-y|\,\mu(\dint x)\,\mu(\dint y).
\]
But this fact is known, see~\cite[Proposition 5.3.4]{hiai_petz}. Alternatively, one can argue as follows.
Assume that $\mu$ has the required $p$-th absolute moment but
\[
\int_{\R} \int_{\R} \log|x-y|\,\mu(\dint x)\,\mu(\dint y) \geq \int_{\R} \int_{\R} h_p(x) h_p(y )\log|x-y|\,\dint x\,\dint y.
\]
Then
\begin{multline*}
\mathscr E_p(\mu) = \int_{\R} \int_{\R} \log \frac{1}{|x-y|} \,\mu(\dint x)\,\mu(\dint y) + \frac{2}{\lambda_p} \int_{\R}|x|^p \,\mu(\dint x)
\\
\leq \int_{\R} \int_{\R} h_p(x) h_p(y)\log \frac{1}{|x-y|} \,\dint x\,\dint y + \frac{2}{\lambda_p} \int_{\R}|x|^ph_p(x) \,\dint x = \mathscr E_p(\mu^{(p)}).
\end{multline*}
The unique minimizer of $\mathscr E_p$ is the Ullman distribution $\mu^{(p)}$, see~\cite[Theorem 5.1 on p.~240]{SaffBOOK}. It follows that $\mu = \mu^{(p)}$. Noting that the value of $\mathscr J_p(\,\cdot\,)$ remains unchanged if we replace $\mu^{(p)}$ by a measure $\mu_b^{(p)}$, $b\in(0,\infty)$, as in the statement of the lemma, completes the proof.
\end{proof}

We now present the main result of this subsection.

\begin{thm}\label{lem:limit delta_n(p)}
As $n\to\infty$,
\[
\log \Delta_n(p) \rightarrow \log \Delta(p) = \int_{-1}^1\int_{-1}^1 h_p(x)h_p(y) \log |x-y| \dint x\,\dint y - \frac{1}{p} \log\E|\mathbb U|^p,
\]
where $\mathbb U\sim \mathscr U(p)$ is an Ullman random variable.
\end{thm}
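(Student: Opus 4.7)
The plan is to prove the convergence by matching $\liminf$ and $\limsup$ bounds, interpreting both as values of the functional $\mathscr{J}_p$ from Lemma~\ref{lem:J_p_maximizer}; the right-hand side of the claimed identity is precisely $\mathscr{J}_p(\mu^{(p)})$, i.e.\ the maximal value of $\mathscr{J}_p$ (unique up to rescaling). Observe that for any $n$-tuple $(t_1,\dots,t_n)$ one has
\[
\log\Delta_n(p) \;=\; -\frac{1}{p}\log\Big(\tfrac{1}{n}\sum_{i=1}^n |t_i|^p\Big) \,+\, \tfrac{1}{n(n-1)}\sum_{i\neq j}\log|t_i-t_j|,
\]
which is a scale-invariant, ``off-diagonal'' discrete analogue of $\mathscr{J}_p(\mu_n)$ evaluated at the empirical measure $\mu_n=\tfrac{1}{n}\sum_i\delta_{t_i}$.

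For the \emph{lower bound}, I would evaluate this expression at an i.i.d.\ sample $t_1,\dots,t_n$ drawn from $\mu^{(p)}$. The ordinary strong law gives $\tfrac{1}{n}\sum|t_i|^p\to\E|\mathbb U|^p$ almost surely, while the strong law of large numbers for $U$-statistics applied to the kernel $\log|x-y|$—integrable with respect to $\mu^{(p)}\otimes\mu^{(p)}$ by Lemma~\ref{lem:double integral ullman}—yields $\tfrac{1}{n(n-1)}\sum_{i\neq j}\log|t_i-t_j|\to \iint h_p(x)h_p(y)\log|x-y|\,\dif x\,\dif y$ almost surely. Since $\Delta_n(p)$ is defined as a supremum, these convergences combine to give $\liminf_{n\to\infty}\log\Delta_n(p)\geq \mathscr{J}_p(\mu^{(p)})$.

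For the \emph{upper bound}, I would take maximizers $(t_{1,n}^*,\dots,t_{n,n}^*)$ supplied by Lemma~\ref{lem: delta_n(p)} and use the scale invariance of the ratio in \eqref{eq:def delta_n(p)} to normalize them so that $\max_i|t_{i,n}^*|=1$. The empirical measures $\mu_n$ then sit on the compact set $[-1,1]$, and Helly's selection theorem supplies a weakly convergent subsequence with limit $\mu$. To control the log singularity on the diagonal I would introduce, for $\eta\in(0,1)$, the truncation $f_\eta(x,y):=\max\{\log|x-y|,\log\eta\}$, which is continuous on $[-1,1]^2$; from $\log|x-y|\leq f_\eta(x,y)$ one obtains
\[
\tfrac{1}{n(n-1)}\sum_{i\neq j}\log|t_{i,n}^*-t_{j,n}^*| \;\leq\; \tfrac{n}{n-1}\iint f_\eta\,\dif\mu_n\,\dif\mu_n \;-\; \tfrac{\log\eta}{n-1}.
\]
Letting first $n\to\infty$ along the subsequence (weak convergence with $f_\eta$ bounded continuous) and then $\eta\downarrow 0$ (monotone convergence, as $f_\eta\downarrow\log|\cdot|$ pointwise off the diagonal) gives $\limsup\tfrac{1}{n(n-1)}\sum_{i\neq j}\log|t_{i,n}^*-t_{j,n}^*|\leq \iint\log|x-y|\,\dif\mu\,\dif\mu$. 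Weak convergence on the compact support also yields $\tfrac{1}{n}\sum|t_{i,n}^*|^p\to\int|x|^p\,\dif\mu$. Together this produces $\limsup\log\Delta_n(p)\leq \mathscr{J}_p(\mu)$, bounded by $\mathscr{J}_p(\mu^{(p)})$ via Lemma~\ref{lem:J_p_maximizer}.

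The main obstacle is excluding a degenerate weak limit concentrated at the origin (a Dirac mass at $0$), in which case $\mathscr{J}_p(\mu)$ would be formally ``$-\infty+\infty$'' and the upper-bound argument above would become vacuous. This scenario has to be ruled out by combining the lower bound just established with the minimum-gap estimates of Lemma~\ref{lem: delta_n(p)} (which, once rescaled to the current normalization, prevent all but a vanishing fraction of the $t_{i,n}^*$ from clustering near $0$ and thus force any subsequential weak limit to carry a positive $p$-th moment). With this degenerate case excluded, the matching $\liminf$ and $\limsup$ bounds—together with the existence of $\lim_{n\to\infty}\log\Delta_n(p)$ from Lemma~\ref{lem: convergence delta_n(p)}—identify $\log\Delta(p)$ with $\mathscr{J}_p(\mu^{(p)})$, completing the proof.
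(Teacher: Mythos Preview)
Your lower bound is exactly the paper's: an i.i.d.\ sample from $\mu^{(p)}$ together with the SLLN and the $U$-statistic LLN.

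For the upper bound you take a genuinely different route. The paper does not pass to weak limits of the empirical measures of the maximizers. Instead, with the normalization $\|t^*\|_p=1$, it \emph{smears} each $t_{i,n}^*$ over a one-sided interval of length $\eps_n=n^{-2c_p}$ to build absolutely continuous measures $\nu_n$, and then shows two things: (a) $\limsup_n n\int|t|^p\,\dif\nu_n\le 1$, and (b) the logarithmic energy of $\nu_n$ dominates the discrete Vandermonde sum up to $o(1)$. Claim (b) is where the gap estimates of Lemma~\ref{lem: delta_n(p)} enter; claim (a) requires a separate argument---removing the extremal point and comparing with $\Delta_{n-1}(p)$---to show $\max_i|t_{i,n}^*|=o(1)$. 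From (a)--(b) one gets $\liminf_n\mathscr J_p(\nu_n)\ge\log\Delta(p)$, and Lemma~\ref{lem:J_p_maximizer} finishes. Your Helly/truncation scheme is conceptually cleaner when it works, and it avoids the delicate self- and cross-interaction estimates of the paper; the price is that you must rule out degeneration of the weak limit.

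Here is the genuine gap. Your proposed fix for the case $\mu=\delta_0$ does not go through: the gap bound of Lemma~\ref{lem: delta_n(p)} says only that consecutive spacings (away from the central one) are at least $n^{-2-2/p}$, so all $n$ points can sit in an interval of length $n\cdot n^{-2-2/p}=n^{-1-2/p}\to 0$. Nothing in that lemma prevents the empirical measure (under your $\max_i|t_{i,n}^*|=1$ normalization) from converging weakly to $\delta_0$, and the lower bound does not help either: if $\gamma_n:=\tfrac1n\sum_i|t_{i,n}^*|^p\to 0$ then $B_n\to+\infty$ while $A_n\to-\infty$, and the identity $A_n+B_n=\log\Delta_n(p)\to\log\Delta(p)$ is perfectly consistent with this. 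What you would actually need is $\liminf_n\gamma_n>0$, which under $\|t^*\|_p=1$ translates into $\max_i|t_{i,n}^*|=O(n^{-1/p})$. This is strictly stronger than the $o(1)$ bound the paper proves via the ``remove one point and compare with $\Delta_{n-1}(p)$'' argument; that argument (run with $\max_i|t_{i,n}^*|>2\delta$) loses all quantitative information as $\delta\to 0$ and yields no rate. So neither the gap estimates nor the paper's intermediate step supply what your compactness approach requires. To close the argument you would have to establish the sharper bound $\max_i|t_{i,n}^*|=O(n^{-1/p})$ by a new argument, or abandon the $\max=1$ normalization and instead control tightness directly---for instance by adapting the paper's smearing to your setting.
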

\begin{rmk}
Recalling Lemma~\ref{lem:double integral ullman} and~\eqref{eq:expectation ullman power p}, we obtain the explicit value of $\Delta(p)$ given in Theorem~\ref{thm:VolumeAsymptotics}.
\end{rmk}
\begin{proof}[Proof of Theorem~\ref{lem:limit delta_n(p)}]
Recall the definition of $\Delta_n(p)$ in Equation \eqref{eq:def delta_n(p)}. We have that
\begin{equation}\label{eq:log_Delta_n}
\log \Delta_n(p) = \sup_{(t_1,\ldots,t_n)\in\R^n\backslash\{0\}} \left(\frac{2}{n(n-1)} \sum_{1\leq i < j \leq n} \log |t_i-t_j| - \frac 1p \log \left(\frac 1n \sum_{i=1}^n |t_i|^p\right)\right).
\end{equation}
Let us start with the lower bound and consider independent random variables $t_1,t_2,\ldots \sim\mathscr U(p)$ with density $h_p$. The law of large numbers for $U$-statistics~\cite[Theorem 3.1.1]{koroljuk_book} implies that, as $n\to\infty$,
\[
\frac{2}{n(n-1)} \sum_{1\leq i < j \leq n} \log|t_i-t_j| \stackrel{\text{a.s.}}{\longrightarrow} \int_{-1}^1\int_{-1}^1 h_p(x)h_p(y) \log|x-y|\, \dint x\,\dint y.
\]
On the other hand, the classical strong law of large numbers shows that, as $n\to\infty$,
\begin{align*}
\frac{1}{p}\log\bigg(\frac{1}{n}\sum_{i=1}^n|t_i|^p\bigg) \stackrel{\text{a.s.}}{\longrightarrow} \frac{1}{p} \log \E|\mathbb U|^p,
\end{align*}
where $\mathbb U\sim \mathscr U(p)$. Therefore, as $n\to\infty$,
\begin{multline*}
\frac{2}{n(n-1)} \sum_{1\leq i < j \leq n} \log |t_i-t_j| - \frac 1p \log \left(\frac 1n \sum_{i=1}^n |t_i|^p\right)
\\
\stackrel{\text{a.s.}}{\longrightarrow} \int_{-1}^1\int_{-1}^1 h_p(x)h_p(y) \log|x-y|\, \dint x\,\dint y - \frac{1}{p} \log \E|\mathbb U|^p.
\end{multline*}
Hence,
\[
\log \Delta(p) = \lim_{n\to\infty}\log \Delta_n(p) \geq \int_{-1}^1\int_{-1}^1 h_p(x)h_p(y) \log |x-y|\, \dint x\,\dint y - \frac{1}{p} \log\E|\mathbb U|^p.
\]

\vskip 2mm
We continue with the upper bound. To this end, we consider a maximizer of the right-hand side of~\eqref{eq:log_Delta_n}, which we denote by $(t_{1,n}^*,\dots,t_{n,n}^*)$. As before, we may assume that
\[
t_{1,n}^* < \ldots < t_{n,n}^* \qquad\text{and}\qquad \|(t_{1,n}^*,\ldots,t_{n,n}^*)\|_p=1,
\]
and that, as we demonstrated in Lemma \ref{lem: delta_n(p)},
\begin{equation}\label{eq:gaps_big}
\inf_{k_0 \leq i < j\leq n} |t_{j,n}^*-t_{i,n}^*| \geq n^{-c_p}\quad\text{and}\quad \inf_{1 \leq i < j\leq k_0-1} |t_{j,n}^*-t_{i,n}^*| \geq  n^{-c_p},
\end{equation}
where $c_p:= 2+\frac 2p$ and $k_0=k_0(n)$ is such that $t^*_{k_0-1,n} < 0 \leq t^*_{k_0,n}$.
This essentially means that, for all $1\leq i < j\leq n$,
\[
|t_{j,n}^*-t_{i,n}^*| \geq n^{-2-\frac{2}{p}},
\]
but excludes the case where $i=k_0-1$ and $j=k_0$. Let us put $\eps_n := n^{-2 c_p}$ and consider the following (absolutely continuous) probability measure $\nu_n$ on $\R$, which is the uniform measure on appropriate one-sided neighborhoods $B_{i,n}$ of the maximizing points $t_{i,n}^*$ with density
$$
f_n(t) =
\frac 1 {n\eps_n}
\sum_{i=1}^n \mathbbm 1_{B_{i,n}}(t),\qquad t\in \R,
$$
where
$$
B_{i,n}
=
\begin{cases}
[t_{i,n}^*-\eps_n, t^*_{i,n}] &:\, i>k_0\\
[t_{i,n}^*, t^*_{i,n}+\eps_n] &:\, i<k_0-1\\
[t_{k_0,n}^*,t_{k_0,n}^*+\eps_n] &:\, i=k_0\\
[t_{k_0-1,n}^*-\eps_n,t_{k_0-1,n}^*] &:\, i=k_0-1.
\end{cases}
$$
For sufficiently large $n$, the intervals $B_{1,n},\ldots,B_{n,n}$ are disjoint by~\eqref{eq:gaps_big} and $f_n$ is indeed a probability density because $\int_\R f_n(t) \dint t = 1$. We claim that
\begin{equation}\label{eq:limsup_p_moment_f_n}
\limsup_{n\to\infty} n\int_\R |t|^p f_n(t) \dint t \leq 1
\end{equation}
and
\begin{equation}\label{eq:log_energy_f_n}
\int_\R\int_\R f_n(x) f_n(y) \log |x-y| \,\dint x\, \dint y
\geq \frac{2}{n^2} \sum_{1\leq i < j \leq n} \log |t_{i,n}^* - t_{j,n}^*| -o(1),
\end{equation}
where $o(1)$ stands for a sequence which tends to $0$, as $n\to\infty$.
Noting that \eqref{eq:limsup_p_moment_f_n} implies
\[
-\limsup_{n\to\infty} \frac{1}{p} \log \left( n\int_\R |t|^p f_n(t)\dint t\right) \geq 0,
\]
from these two claims it would follow that the functional $\mathscr J_p(\,\cdot\,)$ defined as in Lemma~\ref{lem:J_p_maximizer} satisfies
\begin{eqnarray*}
&&\liminf_{n\to\infty} \mathscr J_p(\nu_n)\\
&=&
\liminf_{n\to\infty} \left(\int_\R\int_\R f_n(x) f_n(y) \log |x-y| \, \dint x\, \dint y - \frac 1p \log \bigg( n\int_\R |t|^p f_n(t)\,\dint t\bigg) - \frac 1p \log \frac 1n\right)
\\
&\geq&
\liminf_{n\to\infty}
\left(\frac{2}{n^2} \sum_{1\leq i < j \leq n} \log |t_{i,n}^* - t_{j,n}^*| -o(1) - \frac{1}{p}\log \frac{1}{n}\right)\\
&=&
\liminf_{n\to\infty}
\left[\bigg(\frac{2}{n(n-1)} \sum_{1\leq i < j \leq n} \log |t_{i,n}^* - t_{j,n}^*| - \frac{n^2}{n(n-1)}\frac{1}{p}\log \frac{1}{n}\bigg)\cdot \frac{n(n-1)}{n^2}\right] \\
&\geq&
\liminf_{n\to\infty}
\left[\bigg(\frac{2}{n(n-1)} \sum_{1\leq i < j \leq n} \log |t_{i,n}^* - t_{j,n}^*| - \frac{1}{p}\log \frac{1}{n}\bigg)\cdot \frac{n(n-1)}{n^2}\right] \\
&=& \liminf_{n\to\infty} \log\Delta_n(p) = \log\Delta(p),
\end{eqnarray*}
where we used \eqref{eq:limsup_p_moment_f_n} and \eqref{eq:log_energy_f_n} for the first inequality and Lemma~\ref{lem: convergence delta_n(p)} in the last step.
Together with Lemma~\ref{lem:J_p_maximizer}, this yields that $\log \Delta(p) \leq \liminf_{n\to\infty} \mathscr J_p(\nu_n) \leq \mathscr J_p(\mu^{(p)})$, where $\mu^{(p)}$ is the Ullman measure, and the upper bound would follow.

\vspace*{2mm}
\noindent
\textit{Proof of~\eqref{eq:limsup_p_moment_f_n}.} We have
\[
n\int_0^{\infty} |t|^p f_n(t) \,\dint t = \frac 1 {\eps_n} \sum_{i=k_0}^n  \int_{B_{i,n}} |t|^p \,\dint t
\leq(t_{k_0,n}^*+\eps_n)^p + \sum_{i=k_0+1}^n |t_{i,n}^*|^p\,.
\]
A similar estimate holds for the integral over the negative half-axis, where we obtain
\[
n\int_{-\infty}^0 |t|^pf_n(t)\,\dint t \leq |t_{k_0-1,n}^*-\varepsilon_n|^p + \sum_{i=1}^{k_0-2}|t_{i,n}^*|^p.
\]
Taking both estimates together, we arrive at the upper bound
\begin{align*}
n\int_{\R} |t|^p f_n(t) \dint t &\leq  \sum_{i=1}^n |t_{i,n}^*|^p + (t_{k_0,n}^*+\eps_n)^p + |t_{k_0-1,n}^*-\eps_n|^p\\
&=
1 + (t_{k_0,n}^*+\eps_n)^p + |t_{k_0-1,n}^*-\eps_n|^p.
\end{align*}
It remains to show that the second and the third summand tend to $0$, as $n\to\infty$. In fact, we shall even prove that $|t_{1,n}^*|=o(1)$  and $t_{n,n}^* = o(1)$, from which the claim follows, since $\eps_n\to 0$. Assume, by contraposition, that there are infinitely many $n$'s for which
\[
\max\{|t_{1,n}^*|, t_{n,n}^*\} > 2\delta
\]
for some $\delta\in(0,1)$. In the following, we restrict $n$ to the subsequence for which the above always holds.
Without loss of generality let $t_{n,n}^* \geq  |t_{1,n}^*|$. By definition of $\log \Delta_n(p)$ given in~\eqref{eq:log_Delta_n}, we have
\[
\frac{2}{(n-1)(n-2)} \sum_{1\leq i < j \leq n-1} \log |t_{i,n}^* -t_{j,n}^*| - \frac 1p \log \left( \frac 1 {n-1}\sum_{i=1}^{n-1} |t_{i,n}^*|^p \right) \leq \log \Delta_{n-1}(p).
\]
On the other hand,
\[
\frac {1}{n-1}\sum_{i=1}^{n-1} |t_{i,n}^*|^p = \frac{1}{n-1} \left(\sum_{i=1}^n |t_{i,n}^*|^p - |t_{n,n}^*|^p\right)
\leq
\frac{1-2\delta}{n-1}
\leq
\frac {1-\delta}n
\]
if $n$ is sufficiently large. It follows that
\[
\sum_{1\leq i < j \leq n-1} \log |t_{i,n}^* -t_{j,n}^*| \leq \frac{(n-1)(n-2)}{2} \left(\log \Delta_{n-1}(p) + \frac 1p \log \frac{1-\delta}n\right).
\]
Further, the trivial bound $|t_{i,n}^*|\leq 1$ for all $1\leq i\leq n$ implies that
\[
\sum_{1\leq i \leq n-1} \log|t_{n,n}^* - t_{i,n}^*| \leq n \log 2.
\]
Taking the sum of the last two inequalities, we arrive at
\[
\sum_{1\leq i < j \leq n} \log |t_{i,n}^* -t_{j,n}^*| \leq \frac{(n-1)(n-2)}{2} \left(\log \Delta_{n-1}(p) + \frac 1p \log \frac{1-\delta}{n}\right) + n\log 2.
\]
Recalling that $(t_{1,n}^*,\ldots, t_{n,n}^*)$ is a maximizer of $\log\Delta_n(p)$ given in~\eqref{eq:log_Delta_n}, we obtain from the previous estimate that
\begin{align*}
\log \Delta_n(p)
&=
\frac 2 {n(n-1)} \sum_{1\leq i < j \leq n} \log |t_{i,n}^* -t_{j,n}^*|  - \frac 1p \log \frac 1n \\
&\leq
\frac{n-2}{n} \left(\log \Delta_{n-1}(p) + \frac 1p \log \frac{1-\delta}{n}\right) + \frac {2\log 2}{n-1}- \frac 1p \log \frac 1n.
\end{align*}
As $n\to\infty$, the left-hand side tends to $\log \Delta(p)$, whereas the expression on the right-hand side tends to $\log \Delta(p) + \frac{1}{p}\log (1-\delta) < \log \Delta(p)$. This contradiction completes the proof of~\eqref{eq:limsup_p_moment_f_n}.

\vspace*{2mm}
\noindent
\textit{Proof of~\eqref{eq:log_energy_f_n}.}
We split the double integral on the left-hand side of~\eqref{eq:log_energy_f_n} into a double sum as follows:
\[
\int_\R\int_\R f_n(x) f_n(y) \log |x-y|\, \dint x\, \dint y =  \frac {1}{n^2\eps_n^2}\sum_{i=1}^n\sum_{j=1}^n \int_{B_{i,n}}\int_{B_{j,n}} \log |x-y| \,\dint x\, \dint y.
\]
Observe that each summand on the right-hand side represents the interaction between the one-sided neighborhoods of $t_{i,n}^*$ and $t_{j,n}^*$.

\vspace*{2mm}
\noindent
\textit{Case 1: Self-interactions.} Let us take some $i>k_0$ and consider  the interaction of the neighborhood $B_{i,n} = [t_{i,n}^* -\eps_n, t_{i,n}^*]$ with itself. If we denote by $X$ and $Y$ two independent random variables with uniform distribution on the interval $[0,1]$, then $t_{i,n}^* -\eps_n X$ and $t_{i,n}^* -\eps_n Y$ are uniformly distributed on the corresponding neighborhood $B_{i,n}$ and we can write
\begin{align*}
\frac 1 {n^2 \eps_n^2} \int_{B_{i,n}} \int_{B_{i,n}} \log |x-y| \,\dint x\, \dint y
&=
\frac {\E \log |(t_{i,n}^* -\eps_n X) - (t_{i,n}^* -\eps_n Y)|}{n^2}\\
&=
\frac {\log \eps_n + \E \log |X-Y|}{n^2}
=
O\left(\frac{\log n} {n^2}\right)
\end{align*}
because $\E \log |X-Y|$ is finite and $\eps_n = n^{-2 c_p}$.
Similar estimates hold in the case $i<k_0-1$ and in the exceptional cases $i=k_0, k_0-1$. For the sum of self-interactions, we obtain the bound
\[
\sum_{i=1}^n \frac 1 {n^2 \eps_n^2} \int_{B_{i,n}} \int_{B_{i,n}} \log |x-y|\, \dint x\, \dint y = O\left(\frac{\log n} {n}\right)=o(1).
\]

\vspace*{2mm}
\noindent
\textit{Case 2: Interactions between different intervals.} Let $i\neq j$. Then the corresponding interval $B_{i,n}$ (and, similarly,  $B_{j,n}$) has either the form $[t_{i,n}^* -\eps_n, t_{i,n}^*]$ or the form $[t_{i,n}^*, t_{i,n}^*+\eps_n]$. If $X$ and $Y$ are again independent random variables each with uniform distribution on the interval $[0,1]$, then the uniformly distributed random variables on $B_{i,n}$ and $B_{j,n}$ have the form $t^*_{i,n} \pm \eps_n X$ and $t^*_{j,n} \pm \eps_n Y$ for an appropriate choice of signs. Thus,
\begin{align*}
\frac 1 {n^2 \eps_n^2} \int_{B_{i,n}} \int_{B_{j,n}} \log |x-y| \,\dint x\, \dint y
&=
\frac {\E \log |(t_{i,n}^* \pm \eps_n X) - (t_{j,n}^* \pm \eps_n Y)|}{n^2} \\
&=
\frac 1 {n^2} \log |t_{i,n}^*  - t_{j,n}^*| +  \frac 1{n^2}\, \E \log \left|1 +  \eps_n  \frac{ \pm X \mp Y }{t_{i,n}^*  - t_{j,n}^*} \right|.
\end{align*}
To estimate the second term on the right-hand side, we recall that in all cases except when $(i,j)=(k_0-1,k_0)$ or $(i,j)=(k_0,k_0-1)$, we have $|t_{i,n}^*-t_{j,n}^*| > n^{-c_p}$, whereas $\eps_n = n^{-2c_p}$, and therefore
\[
\frac 1{n^2}\, \E \log \left|1 +  \eps_n  \frac{ \pm X \mp Y }{t_{i,n}^*  - t_{j,n}^*} \right|
=
\frac 1{n^2}  O\left(  \frac{\eps_n}{|t_{i,n}^*  - t_{j,n}^*|} \right)
=
O\left(\frac1{n^{2+c_p}}\right).
\]
This estimate is uniform in $i,j$ and the sum of at most $n^2$ error terms of the above form is $o(1)$.
In the exceptional case when $(i,j)=(k_0-1,k_0)$, we observe that the intervals were chosen so that the corresponding term has the form
\[
\frac 1 {n^2 \eps_n^2} \int_{B_{k_0-1,n}} \int_{B_{k_0,n}} \log |x-y|\, \dint x\, \dint y
\geq
\frac 1 {n^2}\, \log |t_{k_0-1,n}^*  - t_{k_0,n}^*|,
\]
because $|x-y|\geq |t_{k_0-1,n}^*  - t_{k_0,n}^*|$ for $x\in B_{k_0-1, n}$, $y\in B_{k_0, n}$. The same estimate holds if $(i,j)=(k_0,k_0-1)$.

\vspace*{2mm}
Taking all the estimates of Case 1 and Case 2 together, we arrive at
$$
\int_\R\int_\R f_n(x) f_n(y) \log |x-y|\, \dint x\, \dint y  \geq
\frac{2}{n^2} \sum_{1\leq i < j \leq n} \log |t_{i,n}^* - t_{j,n}^*| -o(1),
$$
which completes the proof of~\eqref{eq:log_energy_f_n}.
\end{proof}

\subsection{Asymptotic behaviour of $I_{n,\beta,p}$}

We continue with the asymptotic behavior of the quantity $I_{n,\beta,p}$ for $0 < p < \infty$, which was defined in~\eqref{eq:I_n_beta_p}. Again we shall work with a general parameter $\beta\in(0,\infty)$.

\begin{lemma}\label{lem:asymptoticsIp<infinity}
Let $0 < p < \infty$ and $\beta\in(0,\infty)$. Then, as $n\to\infty$,
\[
I_{n,\beta,p}^{2/n^2} \asymptequiv n^{-\frac{\beta}{p}}\Delta^\beta(p).
\]
\end{lemma}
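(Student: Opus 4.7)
The plan is to sandwich $I_{n,\beta,p}$ between two quantities asymptotically equivalent to $n^{-\beta n(n-1)/(2p)}\Delta_n(p)^{\beta n(n-1)/2}$ and then pass to the limit using the monotone convergence $\Delta_n(p)\downarrow\Delta(p)$ from Lemma~\ref{lem: convergence delta_n(p)} together with the elementary fact $\vol_n(\B_p^n)^{2/n^2}\to 1$, which follows from Stirling applied to the closed form $\vol_n(\B_p^n)=(2\Gamma(1+1/p))^n/\Gamma(1+n/p)$ since then $\log\vol_n(\B_p^n)=O(n\log n)$.

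For the upper bound I would read the pointwise inequality
$\prod_{1\leq i<j\leq n}|\lambda_j-\lambda_i|^\beta\leq (n^{-1/p}\Delta_n(p))^{\beta n(n-1)/2}\|\lambda\|_p^{\beta n(n-1)/2}$
directly off the definition~\eqref{eq:def delta_n(p)}, use $\|\lambda\|_p\leq 1$ on $\B_p^n$, and integrate to get $I_{n,\beta,p}\leq\vol_n(\B_p^n)(n^{-1/p}\Delta_n(p))^{\beta n(n-1)/2}$; raising to the $2/n^2$-th power and using the limits above together with $n^{-\beta(n-1)/(pn)}\asymptequiv n^{-\beta/p}$ gives $\limsup n^{\beta/p}I_{n,\beta,p}^{2/n^2}\leq\Delta(p)^\beta$.

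The lower bound is the real work and will follow the pattern of Lemma~\ref{lem:asymptotic I infinity}, with the Fekete-like maximizer $(t_{i,n}^*)$ supplied by Lemma~\ref{lem: delta_n(p)} replacing the Gauss--Lobatto Chebychev nodes. I would normalize so that $t_{1,n}^*<\cdots<t_{n,n}^*$, $\|t^*\|_p=1$, with minimal same-sign gap $\geq n^{-c_p}$ for $c_p:=2+2/p$, and write $k_0$ for the separation index $t_{k_0-1,n}^*<0\leq t_{k_0,n}^*$. Setting $\varepsilon_n:=n^{-c_p-2}$, I would define the \emph{outward-pointing} one-sided neighborhoods $B_{i,n}:=[t_{i,n}^*-\varepsilon_n,t_{i,n}^*]$ for $i\leq k_0-1$ and $B_{i,n}:=[t_{i,n}^*,t_{i,n}^*+\varepsilon_n]$ for $i\geq k_0$; these are pairwise disjoint since $\varepsilon_n\ll n^{-c_p}$, so $Q:=\prod_i B_{i,n}$ has volume $\varepsilon_n^n$. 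The outward orientation is the key design choice: for every ``cross'' pair $(i,j)$ with $i\leq k_0-1<k_0\leq j$ --- in particular the single zero-straddling edge $(k_0-1,k_0)$, where Lemma~\ref{lem: delta_n(p)} furnishes \emph{no} gap bound --- any point of $Q$ automatically satisfies $t_j\geq t_{j,n}^*\geq 0\geq t_{i,n}^*\geq t_i$, yielding $|t_j-t_i|\geq|t_{j,n}^*-t_{i,n}^*|$ \emph{losslessly}; for same-side pairs Lemma~\ref{lem: delta_n(p)} combined with $\varepsilon_n\ll n^{-c_p}$ gives $|t_j-t_i|\geq|t_{j,n}^*-t_{i,n}^*|-\varepsilon_n\geq(1-n^{-2})|t_{j,n}^*-t_{i,n}^*|$.

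To fit $Q$ into $\B_p^n$ I would rescale by $\tau_n:=1+\varepsilon_n n^{1/p}$ for $p\geq 1$ or $\tau_n:=(1+n\varepsilon_n^p)^{1/p}$ for $p<1$; both satisfy $\tau_n\to 1$ and $\tau_n^{-1}Q\subset\B_p^n$ by the triangle (respectively $p$-subadditivity) inequality applied to $\|t^*\|_p=1$ and $|t_i-t_{i,n}^*|\leq\varepsilon_n$. A change of variables then yields
$$I_{n,\beta,p}\geq\tau_n^{-n-\beta n(n-1)/2}\,\varepsilon_n^n\,(1-n^{-2})^{\beta n(n-1)/2}\bigl(\Delta_n(p)/n^{1/p}\bigr)^{\beta n(n-1)/2}.$$
Raising to the $2/n^2$-th power, each of $\tau_n^{-2/n-\beta(n-1)/n}$, $\varepsilon_n^{2/n}=n^{-2(c_p+2)/n}$ and $(1-n^{-2})^{\beta(n-1)/n}$ tends to $1$, while $\Delta_n(p)^{\beta(n-1)/n}\to\Delta(p)^\beta$ and $n^{-\beta(n-1)/(pn)}\asymptequiv n^{-\beta/p}$, giving $\liminf n^{\beta/p}I_{n,\beta,p}^{2/n^2}\geq\Delta(p)^\beta$ and matching the upper bound. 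The only non-routine obstacle is the zero-straddling Vandermonde edge, and the outward orientation of $B_{k_0-1,n},B_{k_0,n}$ is precisely the trick that absorbs this missing separation estimate at no cost.
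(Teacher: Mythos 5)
Your proof is correct, and it differs from the paper's in one substantive and rather advantageous respect. The upper bound is a trivial variant (you integrate over $\B_p^n$ directly rather than enlarging to $\B_\infty^n$; both give a negligible factor after the $2/n^2$ power). For the lower bound, the paper builds its box $Q$ from \emph{two-sided} $\eps m$-neighborhoods around all $t_{i,n}^*$ with $i\notin\{k_0-1,k_0\}$ and outward one-sided neighborhoods only at the two indices flanking zero; it then asserts $I_{n,\beta,p}\geq\vol_n(Q)\cdot(\text{pointwise bound})$, which silently requires $Q\subset\B_p^n$. That inclusion is not automatic: the two-sided outer neighborhoods (in particular at $i=1$ and $i=n$) can push $\|t\|_p$ above $1$ since $\|t^*\|_p=1$ is tight. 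Your construction avoids this entirely. Making every $B_{i,n}$ one-sided and \emph{outward} buys you two things at once: the zero-straddling pair $(k_0-1,k_0)$ is handled losslessly exactly as in the paper, and the resulting $Q$ is a simple product of intervals whose $\ell_p$-distance from $t^*$ is uniformly controlled, so the rescaling $\tau_n^{-1}Q\subset\B_p^n$ with $\tau_n\to1$ closes the containment issue transparently via the change of variables $t=\tau_n^{-1}s$, which only introduces the harmless factor $\tau_n^{-n-\beta n(n-1)/2}$. The $p\geq1$ versus $p<1$ distinction in defining $\tau_n$ is exactly right (triangle inequality versus $p$-subadditivity of $\|\cdot\|_p^p$). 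All asymptotics at the end ($\varepsilon_n^{2/n}\to1$, $(1-n^{-2})^{\beta(n-1)/n}\to1$, $\Delta_n(p)^{\beta(n-1)/n}\to\Delta(p)^\beta$ by Lemma~\ref{lem: convergence delta_n(p)}, $n^{-\beta(n-1)/(pn)}\asymptequiv n^{-\beta/p}$) check out. In short: same skeleton as the paper, but a cleaner and in fact tighter lower-bound construction that repairs an unaddressed gap in the published argument.
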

\begin{proof}
We start with the upper bound. Using the definition of $\Delta_n(p)$ given in~\eqref{eq:def delta_n(p)} and the fact that $\B_p^n\subset \B_\infty^n$, we obtain
\begin{align*}
I_{n,\beta,p} & = \int_{\B_p^n} \prod_{1\leq i < j \leq n} |t_i-t_j|^\beta\, \dint t_1\dots\dint t_n \cr
& \leq \vol_n(\B_\infty^n)\, \Big(\Delta_n(p)\Big)^{\beta\frac{n(n-1)}{2}}n^{-\frac{\beta}{p}\frac{n(n-1)}{2}} \cr
& =2^n\Big(\Delta_n(p)\Big)^{\beta\frac{n(n-1)}{2}}n^{-\frac{\beta}{p}\frac{n(n-1)}{2}}.
\end{align*}
Therefore,
\[
\limsup_{n\to\infty} n^{\beta/p}I_{n,\beta,p}^{2/n^2}  \leq \Delta^\beta(p).
\]
Let us continue with the lower bound. To this end, we consider the maximizers $t_{1,n}^*,\dots,t_{n,n}^*$ from Lemma \ref{lem: delta_n(p)}. Similar to the proof of Lemma \ref{lem:asymptotic I infinity}, we consider small neighbourhoods around these points. More precisely, let us define $m:=\min\{m_1,m_2\}$ (see Lemma \ref{lem: delta_n(p)} for the definition of $m_1$ and $m_2$) and consider for some small $\varepsilon \in (0,1/2)$ the one-sided neighborhoods
\[
[t_{k_0-1,n}^*-\varepsilon m, t_{k_0-1,n}^*] \qquad\text{and}\qquad [t_{k_0,n}^*, t_{k_0,n}^*+\varepsilon m],
\]
around $t_{k_0-1,n}^*$ and $t_{k_0,n}^*$ as well as the two-sided neighbourhoods
\[
[t_{i,n}^* - \varepsilon m , t_{i,n}^*+\varepsilon m ],
\]
around $t_{i,n}^*$ for $i\neq k_{0}-1,k_0$. Consider the $n$-dimensional box
\[
Q:= \prod_{i=1}^{k_0-2}[t_{i,n}^*-\varepsilon m,t_{i,n}^*+\varepsilon m ] \times [t_{k_0-1,n}^*-\varepsilon m, t_{k_0-1,n}^*] \times [t_{k_0,n}^*, t_{k_0,n}^*+\varepsilon m] \times \prod_{i=k_0+1}^n [t_{i,n}^*-\varepsilon m,t_{i,n}^*+\varepsilon m ],
\]
which satisfies $\vol_n(Q) = (\varepsilon m)^2(2\varepsilon m)^{n-2}$. Then, for all $(t_1,\dots,t_n)\in Q$,
\begin{align}\label{ineq: lower bound product on Q in p case}
\prod_{1\leq i < j \leq n} |t_i-t_j| \geq (1-2\varepsilon)^{\frac{n(n-1)}{2}}\bigg( \frac{\Delta_n(p)}{n^{\frac{1}{p}}}\bigg)^{\frac{n(n-1)}{2}},
\end{align}
which follows along the same lines as the corresponding part in the proof of Lemma \ref{lem:asymptotic I infinity} except for the following observation:  even though no estimate on $t^*_{k_0,n} - t^*_{k_0-1,n}$ is available, the way the one-sided neighborhoods were chosen allows us to write 
\[|t_{k_0} - t_{k_0-1}| \geq  |t^*_{k_0,n} - t^*_{k_0-1,n}|\geq (1-2\eps) |t^*_{k_0,n} - t^*_{k_0-1,n}|.
\] 
Putting things together, we obtain
\begin{align*}
I_{n,\beta,p} & = \int_{\B_p^n} \prod_{1\leq i < j \leq n} |t_i-t_j|^\beta \dint t_1\dots\dint t_n \cr
& \geq \vol_n(Q) (1-2\varepsilon)^{\beta\frac{n(n-1)}{2}}\bigg( \frac{\Delta_n(p)}{n^{\frac{1}{p}}}\bigg)^{\beta\frac{n(n-1)}{2}}.
\end{align*}
Choosing $\varepsilon:=\frac{1}{n}$ and recalling that $m\geq n^{-2-2/p}$ from Lemma \ref{lem: delta_n(p)}, we conclude that
\[
\liminf_{n\to\infty} n^{\beta/p}I_{n,\beta,p}^{2/n^2} \geq \Delta^\beta(p).
\]
This completes the proof.
\end{proof}

\subsection{Proof of Theorem \ref{thm:VolumeAsymptotics}}

Recalling that
\[
\vol_{\beta,n}(\B_{p,\beta}^n) = c_{n,\beta} I_{n,\beta,p}
\]
for $0<p\leq\infty$ and $\beta\in\{1,2,4\}$, the proof of the asymptotic formula for the volume of unit balls in the classical matrix ensembles is now a simple consequence of the results we obtained in the previous sections. If $p=\infty$, the result follows by combining Lemma \ref{lem:asymptotic I infinity} with Lemma \ref{lem:AsymptoticConstant}. If otherwise $0 < p<\infty$, the result is a consequence of Lemma \ref{lem:asymptoticsIp<infinity} and again Lemma \ref{lem:AsymptoticConstant}. Finally, the explicit value of $\Delta(p)$ follows from Theorem~\ref{lem:limit delta_n(p)} and the remark thereafter. This completes the proof of the theorem. \hfill $\Box$

\section{Random sampling in matrix balls \& a weak law of large numbers} \label{sec:sampling and weak law}

We shall present in this section the probabilistic ingredients that we need to study the asymptotic volume of intersections of unit balls in the matrix ensembles $\mathscr{H}_n(\mathbb{F}_\beta)$ . We start with a probabilistic representation of the volume measure on $\B_{p,\beta}^n$ and then present a limit theorem for the empirical eigenvalue distribution of matrices in our ensembles. The latter two are then used to prove a weak law of large numbers for the eigenvalues of a matrix chosen uniformly at random from $\B_{p,\beta}^n$.

\subsection{Random sampling in $\B_{p,\beta}^n$}

We start by recalling the joint law of the $n$ real eigenvalues $\lambda_1(Z) \leq \ldots\leq \lambda_n(Z)$ of an $n\times n$ matrix $Z$ uniformly distributed in $\B_{p,\beta}^n$ with $\beta\in\{1,2,4\}$. The following result follows easily from the Weyl integration formula; see Lemma~\ref{lem:IntegrationFormulaForEnsembles}. 

\begin{lemma}\label{lem:distribution in random order}
Let $0 < p \leq \infty$, $\beta\in\{1,2,4\}$ and $Z$ be a matrix chosen uniformly at random in $\B^n_{p,\beta}$. Then, for any $B\in\mathscr B(\R^n)$,
\begin{align*}
& \Pro\Big((\lambda_{1}(Z),\dots,\lambda_{n}(Z))\in B\Big) = C_{p,\beta,n}\,\int_{B\cap\B_p^n}   \mathbbm 1_{\{y\in\R^n:y_1<\ldots<y_n\}}(x)\,\prod_{1\leq i < j \leq n}|x_i-x_j|^{\beta} \,\dint x,
\end{align*}
where $C_{p,\beta,n}\in(0,\infty)$ is a suitable normalization constant. Moreover, if $\pi$ is a uniform random permutation in $\mathfrak S(n)$, which is independent from $Z$, then, for any $B\in\mathscr B(\R^n)$,
\begin{align*}
& \Pro\Big((\lambda_{\pi(1)}(Z),\dots,\lambda_{\pi(n)}(Z))\in B\Big) = \frac {C_{p,\beta,n}} {n!} \,\int_{B\cap\B_p^n}   \prod_{1\leq i < j \leq n}|x_i-x_j|^{\beta} \,\dint x.
\end{align*}
\end{lemma}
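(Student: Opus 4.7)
The plan is to derive both formulas as near-immediate consequences of the Weyl integration formula (Lemma~\ref{lem:IntegrationFormulaForEnsembles}) combined with the observation that $A\in \B_{p,\beta}^n$ depends only on the eigenvalues of $A$ and corresponds to $(\lambda_1(A),\dots,\lambda_n(A))\in\B_p^n$. Since $Z$ has density $\vol_{\beta,n}(\B_{p,\beta}^n)^{-1}\mathbbm 1_{\B_{p,\beta}^n}(A)$ with respect to $\vol_{\beta,n}$, the entire computation reduces to evaluating integrals of functions of the eigenvalues of $A$.

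For the first formula, fix $B\in\mathscr B(\R^n)$ and note that since $\lambda_1(Z)\leq\dots\leq\lambda_n(Z)$ by definition, we may intersect $B$ with the open Weyl chamber $W:=\{y\in\R^n:y_1<\dots<y_n\}$ (the boundary has Lebesgue measure zero and is not attained almost surely). Define the symmetric function $g:\R^n\to\R$ by
\[
g(x_1,\dots,x_n):=\mathbbm 1_B(x_{(1)},\dots,x_{(n)})\,\mathbbm 1_{\B_p^n}(x),
\]
where $x_{(1)}\leq\dots\leq x_{(n)}$ denotes the increasing rearrangement of $x$. Then $f(A):=g(\lambda_1(A),\dots,\lambda_n(A))$ depends only on the eigenvalues of $A$, so Lemma~\ref{lem:IntegrationFormulaForEnsembles} applies. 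Exploiting the symmetry of $g$ and of the Vandermonde factor under permutation of coordinates, the integral over $\R^n$ collapses to $n!$ times the integral over $W$, which gives the first claim with $C_{p,\beta,n}:=n!\,c_{n,\beta}/\vol_{\beta,n}(\B_{p,\beta}^n)$.

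For the second formula, I would condition on the uniform permutation $\pi$ independent of $Z$. For any bounded Borel $h:\R^n\to\R$,
\[
\E\,h(\lambda_{\pi(1)}(Z),\dots,\lambda_{\pi(n)}(Z))=\E\,\overline h(\lambda_1(Z),\dots,\lambda_n(Z)),
\]
where $\overline h(x):=\tfrac{1}{n!}\sum_{\sigma\in\mathfrak S(n)}h(x_{\sigma(1)},\dots,x_{\sigma(n)})$ is the symmetrization of $h$. Applying the first part to $\overline h$ and then un-symmetrizing (using again that the Vandermonde weight $\prod|x_i-x_j|^\beta$ and the set $\B_p^n$ are permutation-invariant, so that $\int_{\B_p^n}\overline h\cdot\prod|x_i-x_j|^\beta\dint x=\int_{\B_p^n}h\cdot\prod|x_i-x_j|^\beta\dint x$) absorbs the restriction to the Weyl chamber into an extra factor $1/n!$, yielding the claimed density $\tfrac{C_{p,\beta,n}}{n!}\prod_{i<j}|x_i-x_j|^\beta$ on $\B_p^n$. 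Specializing to $h=\mathbbm 1_B$ produces the stated identity.

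There is no substantial obstacle; the only point requiring care is the bookkeeping of the factor $n!$ arising from the passage between the symmetric integral over $\R^n$ in Weyl's formula and the ordered integral over $W$, and correspondingly from symmetrizing over $\pi$. The symplectic case $\beta=4$ presents no additional difficulty once eigenvalues are defined as in \cite{AGZ2010}, since Lemma~\ref{lem:IntegrationFormulaForEnsembles} is already stated in that generality.
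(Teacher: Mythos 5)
Your proof is correct and fills in precisely the argument the paper leaves implicit: the paper only remarks that the lemma ``follows easily from the Weyl integration formula,'' and your write-up (Weyl formula applied to the symmetric indicator, collapse to $n!$ times the integral over the Weyl chamber, then the symmetrization/un-symmetrization step via the permutation-invariance of the Vandermonde weight and of $\B_p^n$) is exactly that argument made explicit. The bookkeeping of the $n!$ factors and the identification $C_{p,\beta,n}=n!\,c_{n,\beta}/\vol_{\beta,n}(\B_{p,\beta}^n)$ are consistent.
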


For $m\geq 0$ we let $f:\R^n\to [0,\infty)$ be a function satisfying $f(tx)=t^mf(x)$ for all $t\geq 0$ and say that $f$ is homogeneous of degree $m$. We also assume that $f$ is integrable with respect to the cone probability measure $\mu_{\B_p^n}$ on $\SSS_p^{n-1}$. In what follows, we shall write $\mathscr F_m^+(\R^n)$ for the class of such $m$-homogeneous, non-negative and integrable functions.

We shall now prove a Schechtman-Zinn type probabilistic representation, where we follow a different route compared to \cite{SchechtmanZinn}. In fact, our argument will be based on the polar integration formula \eqref{eq:polar integration} and not on a limiting argument. The next lemma also shows that if we multiply a random vector $X/\|X\|_p$ on the boundary of an $\B_p^n$ which has a density proportional to $f$ (with respect to the cone probability measure) with a (properly normalized) uniformly distributed random variable, then the resulting random vector still has density proportional to $f$, but now with respect to the $n$-dimensional Lebesgue measure on $\B_p^n$ instead of the cone probability measure.

\begin{lemma}\label{lem:schechtman-zinn cone and volume measure}
Let $0 <  p <\infty$ and $f\in\mathscr F_m^+(\R^n)$ for some $m \geq 0$. Let the random variables $X_1,\dots,X_n$ have joint density on $\R^n$ given by
\[
x\mapsto C_{p,f,n} e^{-\|x\|_p^p} f(x)
\]
with respect to the Lebesgue measure, where $C_{p,f,n}\in(0,\infty)$ is a suitable normalization constant. Define $X:=(X_1,\dots,X_n)$. Then the random vector
$$
X\over \|X\|_p
$$
has density $x\mapsto c_{p,f,n}f(x)$ with respect to the cone probability measure on $\SSS_p^{n-1}$, where $c_{p,f,n}$ is a normalization constant. Moreover, $X/\|X\|_p$ is independent from $\|X\|_p$.

In addition, if $U$ is a random variable uniformly distributed on $[0,1]$ and independent from $X$, then
\[
U^{\frac{1}{n+m}}\frac{X}{\|X\|_p}
\]
has density $x\mapsto{c}_{p,f,n}^{(2)}f(x)$ with respect to the Lebesgue measure on $\B_p^n$, where ${c}_{p,f,n}^{(2)}\in(0,\infty)$ is another normalization constant.
\end{lemma}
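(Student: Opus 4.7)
\medskip

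The plan is to apply the polar integration formula \eqref{eq:polar integration} twice: once on $\R^n$ to prove parts one and two, and once on $\B_p^n$ to prove part three. The $m$-homogeneity of $f$ and the uniform distribution of $U$ combine to produce exactly the weight $r^{n-1}$ needed by the polar formula.

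For the first two assertions, take any bounded measurable $\phi:\SSS_p^{n-1}\to\R$ and any bounded measurable $\psi:[0,\infty)\to\R$. Starting from the joint density of $X$ and writing $x = ry$ with $r=\|x\|_p$ and $y = x/\|x\|_p \in \SSS_p^{n-1}$, the polar integration formula yields
\begin{align*}
\E\bigl[\phi(X/\|X\|_p)\,\psi(\|X\|_p)\bigr]
&= C_{p,f,n}\int_{\R^n} \phi(x/\|x\|_p)\,\psi(\|x\|_p)\,e^{-\|x\|_p^p}\,f(x)\,\dint x \\
&= C_{p,f,n}\,n\vol_n(\B_p^n)\int_0^\infty\!\!\int_{\SSS_p^{n-1}} \phi(y)\,\psi(r)\,e^{-r^p}\,f(ry)\,r^{n-1}\,\mu_{\B_p^n}(\dint y)\,\dint r.
\end{align*}
Since $f(ry)=r^m f(y)$ by $m$-homogeneity, the integrand factorizes into a function of $r$ times a function of $y$, and the double integral splits as a product
\[
\left(\int_0^\infty \psi(r)\,e^{-r^p}\,r^{n+m-1}\,\dint r\right)\left(\int_{\SSS_p^{n-1}} \phi(y)\,f(y)\,\mu_{\B_p^n}(\dint y)\right).
\]
Taking $\psi\equiv 1$ identifies the law of $X/\|X\|_p$ as the measure with density proportional to $f$ with respect to $\mu_{\B_p^n}$ (the normalization constant $c_{p,f,n}$ comes from the finite integral $\frac{1}{p}\Gamma(\tfrac{n+m}{p})$ and the constant $C_{p,f,n}\,n\vol_n(\B_p^n)$). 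The product form of the joint expectation above for arbitrary $\phi,\psi$ is exactly the independence statement.

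For the third assertion, let $U\sim\Uni[0,1]$ be independent of $X$ and set $Y:=U^{1/(n+m)}X/\|X\|_p$. Using independence and the density just derived, for any bounded measurable $\phi:\B_p^n\to\R$,
\[
\E[\phi(Y)] = c_{p,f,n}\int_0^1\!\!\int_{\SSS_p^{n-1}} \phi\bigl(u^{1/(n+m)}y\bigr)\,f(y)\,\mu_{\B_p^n}(\dint y)\,\dint u.
\]
Substituting $r=u^{1/(n+m)}$, so that $\dint u=(n+m)\,r^{n+m-1}\,\dint r$, and invoking $m$-homogeneity via $r^{m}f(y)=f(ry)$, this becomes
\[
c_{p,f,n}(n+m)\int_0^1\!\!\int_{\SSS_p^{n-1}} \phi(ry)\,f(ry)\,r^{n-1}\,\mu_{\B_p^n}(\dint y)\,\dint r.
\]
Applying the polar integration formula \eqref{eq:polar integration} in reverse (the $r$-integration is only over $[0,1]$, so the resulting $x$-integration is only over $\B_p^n$) produces
\[
\E[\phi(Y)] = \frac{c_{p,f,n}(n+m)}{n\vol_n(\B_p^n)}\int_{\B_p^n} \phi(x)\,f(x)\,\dint x,
\]
which is the claimed representation with $\widetilde c_{p,f,n}^{(2)}=\frac{c_{p,f,n}(n+m)}{n\vol_n(\B_p^n)}$.

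The argument is essentially bookkeeping; the only slightly subtle point is the exponent $1/(n+m)$ chosen for $U$. The natural power for the \emph{uniform} case ($f\equiv 1$, $m=0$) is $U^{1/n}$; the extra $m$ accounts for the additional $r^m$ produced by $f(ry)=r^m f(y)$, and is precisely what makes the Jacobian in the substitution $r=u^{1/(n+m)}$ deliver the $r^{n-1}$ weight demanded by the polar integration formula. No separate limiting argument à la Schechtman--Zinn is required.
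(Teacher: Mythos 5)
Your proof is correct and follows essentially the same route as the paper's: both apply the polar integration formula~\eqref{eq:polar integration} together with the $m$-homogeneity of $f$ to split the joint expectation of a product test function into a product, which simultaneously yields the density of $X/\|X\|_p$ and its independence from $\|X\|_p$, and both then obtain the third claim via the substitution $r=u^{1/(n+m)}$ followed by applying the polar formula in reverse over $[0,1]$. The differences are purely notational.
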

\begin{proof}
We shall use the polar integration formula \eqref{eq:polar integration}. Let $X:=(X_1,\dots,X_n)$ be a random vector with joint density
\[
\R^n\to\R,\quad x\mapsto  C_{p,f,n} f(x) e^{-\|x\|_p^p}\,,
\]
where $f\in \mathscr F_m^+(\R^n)$ and $C_{p,f,n}\in(0,\infty)$ is a suitable normalization constant. Consider two arbitrary non-negative measurable functions
\[
h: \SSS_p^{n-1} \to [0,\infty) \qquad\text{and}\qquad g:[0,\infty)\to[0,\infty).
\]
Then, using the polar integration formula \eqref{eq:polar integration} and the homogeneity of $f$, we obtain
\begin{align*}
&\E \bigg[h\bigg(\frac{X}{\|X\|_p}\bigg) g(\|X\|_p)\bigg] \cr
& = \int_{\R^n} C_{p,f,n} f(x) e^{-\|x\|_p^p} h\bigg(\frac{x}{\|x\|_p}\bigg) g(\|x\|_p) \,\dint x \cr
& =n\vol_n(\B_p^n)
\int_0^\infty r^{n-1} \bigg( \int_{\SSS_p^{n-1}} C_{p,f,n} f(ry)e^{-r^p\|y\|_p^p} h\bigg(\frac{y}{\|y\|_p}\bigg) g(\|ry\|_p) \, \mu_{\B_p^n}(\dint y)\bigg) \,\dint r \cr
& = \vol_n(\B_p^n) \bigg(\int_{\SSS_p^{n-1}}f(y)h(y)\, \mu_{\B_p^n}(\dint y)\bigg)\bigg(\int_0^\infty C_{p,f,n} n r^{m+n-1} e^{-r^p}g(r)\,\dint r\bigg).
\end{align*}
This immediately implies two things. First, we read off from the product structure that $X/\|X\|_p$ and $\|X\|_p$ are independent. Second, choosing $g\equiv 1$, we conclude that $X/\|X\|_p$ has a density of the form $x\mapsto c_{p,f,n}f(x)$ with respect to the cone measure $\mu_{\B_p^n}$, where $c_{p,f,n}\in(0,\infty)$ is a suitable normalization constant. Moreover, if $U$ is a random variable uniformly distributed on $[0,1]$ and independent from $X$, then for every measurable function $\widetilde h: \B_p^n \to [0,\infty)$, we have
\begin{align*}
\E \bigg[\widetilde h\Big(U^{1\over n+m}{X\over \|X\|_p}\Big)\bigg]
&=\int_0^1\int_{\SSS_p^{n-1}} \widetilde h(ry) c_{p,f,n}f(y)(n+m)r^{m+n-1}\,\mu_{\B_p^n}(\dint y)\,\dint r\\
&={c}_{p,f,n}^{(1)}\int_0^1\int_{\SSS_p^{n-1}} \widetilde h(ry)f(ry)r^{n-1}\,\mu_{\B_p^n}(\dint y)\,\dint r\\
&=
{c}_{p,f,n}^{(2)}\int_{\B_p^n} \widetilde h(x)f(x)\,\dint x,
\end{align*}
with suitable constants ${c}_{p,f,n}^{(1)}$ and ${c}_{p,f,n}^{(2)}$, where we used once more the polar integration formula \eqref{eq:polar integration} and the homogeneity of $f$. Therefore, the random vector
\[
U^{\frac{1}{n+m}}\frac{X}{\|X\|_p}
\]
has density $x\mapsto {c}_{p,f,n}^{(2)} f(x)$ with respect to the Lebesgue measure on $\B_p^n$.
\end{proof}

As a corollary to the previous lemmas, we obtain the following Schechtman-Zinn type probabilistic representation for the eigenvalues of a matrix sampled uniformly from the unit ball $\B_{p,\beta}^n$. It follows directly by combining Lemma \ref{lem:distribution in random order} with Lemma \ref{lem:schechtman-zinn cone and volume measure} and by taking $f(x)=\prod_{1\leq i<j\leq n}|x_i-x_j|^\beta$, which belongs to the class $\mathscr{F}^+_m(\R^n)$ for $m=\beta n(n-1)/2$.

\begin{cor}\label{cor:EigenvaluesProbabRep}
Let $0 <  p < \infty$ and $Z$ be uniformly distributed in $\B_{p,\beta}^n$, $\beta\in\{1,2,4\}$. Consider a permutation $\pi$ uniformly distributed on $\mathfrak{S}(n)$, which is independent from $Z$. Then
\[
\big(\lambda_{\pi(1)}(Z),\ldots,\lambda_{\pi(n)}(Z)\big) \stackrel{d}{=}U^{1\over n+m}{X\over\|X\|_p}\qquad\text{with}\qquad m={\beta n(n-1)\over 2},
\]
where $U$ is uniformly distributed on $[0,1]$ and, independently of $U$, the vector $X=(X_1,\ldots,X_n)$ has joint density with respect to Lebesgue measure on $\R^n$ which is proportional to
\[
e^{-\sum\limits_{i=1}^n|x_i|^p}\prod_{1\leq i<j\leq n}|x_i-x_j|^\beta,\qquad x\in\R^n,
\]
where the proportionality constant only depends on $\beta$, $p$ and $n$.
\end{cor}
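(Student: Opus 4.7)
The plan is to identify two explicit densities on $\B_p^n$ and check they agree, so that the two random vectors coincide in distribution. The first density comes from Lemma~\ref{lem:distribution in random order}; the second from Lemma~\ref{lem:schechtman-zinn cone and volume measure} applied to a carefully chosen $f$.

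First, set $f(x):=\prod_{1\leq i<j\leq n}|x_i-x_j|^\beta$ for $x=(x_1,\dots,x_n)\in\R^n$. This function is non-negative, and since each factor $|x_i-x_j|$ is positively $1$-homogeneous, $f$ is homogeneous of degree $m:=\beta\binom{n}{2}=\beta n(n-1)/2$. Moreover, $f$ is continuous and hence bounded on the compact set $\SSS_p^{n-1}$, so in particular integrable with respect to the cone probability measure $\mu_{\B_p^n}$. Consequently $f\in\mathscr F_m^+(\R^n)$, and Lemma~\ref{lem:schechtman-zinn cone and volume measure} applies: if $X=(X_1,\dots,X_n)$ has density proportional to $e^{-\|x\|_p^p}f(x)$ with respect to Lebesgue measure on $\R^n$, and if $U$ is independent of $X$ and uniform on $[0,1]$, then
\[
U^{1/(n+m)}\,\frac{X}{\|X\|_p}
\]
has density on $\B_p^n$ proportional to $f(x)$, i.e.\ proportional to $\prod_{i<j}|x_i-x_j|^\beta\mathbbm{1}_{\B_p^n}(x)$ with respect to $n$-dimensional Lebesgue measure.

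Next, the second part of Lemma~\ref{lem:distribution in random order} states that the randomly permuted eigenvalue vector $(\lambda_{\pi(1)}(Z),\dots,\lambda_{\pi(n)}(Z))$ has density on $\R^n$ proportional to
\[
\mathbbm{1}_{\B_p^n}(x)\prod_{1\leq i<j\leq n}|x_i-x_j|^\beta = \mathbbm{1}_{\B_p^n}(x)\,f(x)
\]
with respect to Lebesgue measure. Comparing with the previous paragraph, we see that both random vectors $(\lambda_{\pi(1)}(Z),\dots,\lambda_{\pi(n)}(Z))$ and $U^{1/(n+m)}X/\|X\|_p$ have Lebesgue densities on $\B_p^n$ that are proportional to the same function $f$. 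Since both are probability densities, the proportionality constants must coincide, and the two densities agree. Hence the two vectors have the same distribution, which is precisely the claim. The normalization constant for the density of $X$ depends only on $\beta$, $p$ and $n$, as asserted.
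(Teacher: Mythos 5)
Your proof is correct and follows exactly the route the paper takes: apply Lemma~\ref{lem:distribution in random order} to identify the density of the permuted eigenvalue vector, apply Lemma~\ref{lem:schechtman-zinn cone and volume measure} with $f(x)=\prod_{i<j}|x_i-x_j|^\beta$ (checking $f\in\mathscr F_m^+(\R^n)$ with $m=\beta n(n-1)/2$), and match the two Lebesgue densities on $\B_p^n$. The paper compresses this into a single sentence, but your spelled-out verification of the homogeneity degree and integrability is the same argument.
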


\subsection{A convergence result from random matrix theory}\label{sec:RMT}

It is well known (see~\cite[Theorem~2.5.2]{AGZ2010} and~\cite[Chapter 2]{PS2011}) that the joint distribution of the eigenvalues $\lambda_1^{(n)}\leq \ldots\leq \lambda_n^{(n)}$ of a standard Gaussian random matrix $Z_n$ from $\mathscr H_n(\mathbb F_\beta)$ (with $\beta=1$ corresponding to GOE, $\beta=2$  corresponding to GUE and $\beta=4$ corresponding to GSE, see~\cite[pp.~188--189 and p.~51]{AGZ2010} for the definition of the standard Gaussian distribution on $\mathscr H_n(\mathbb F_\beta)$) has density proportional to
$$
e^{-{\beta\over 4}\sum_{i=1}^n \lambda_i^2} \left(\prod_{1\leq i<j\leq n}|\lambda_i-\lambda_j|^\beta \right)\, \mathbbm 1_{\lambda_1 < \ldots < \lambda_n},
$$
(the proportionality constant can explicitly be computed using Selberg's integral formula, see, e.g.,\ \cite[Theorem~2.5.8]{AGZ2010}). We also define the empirical distribution of scaled eigenvalues of $Z_n$ as the random measure $\rho_n$ on $\R$ given by
$$
\rho_n:={1\over n}\sum_{i=1}^n\delta_{\lambda_i^{(n)}/\sqrt n},
$$
where $\delta_x$ stands for the Dirac measure at $x$. It is a well known fact (see \cite[Chapter 2]{AGZ2010} and~\cite[Chapter 2]{PS2011}) that, with probability one,
$$
\rho_n\stackrel{\text{w}}{\longrightarrow}\rho,
$$
where $\rho$ is the Wigner semicircular distribution on $\R$ with density given by $x\mapsto{1\over 2\pi}\sqrt{4-x^2}$, $|x|\leq 2$.

More generally, let us consider a model for a random $n\times n$ matrix whose joint eigenvalue distribution on $\R^n$ (where, as above, the eigenvalues are ordered increasingly) is absolutely continuous with respect to the Lebesgue measure and has density proportional to
\begin{equation}\label{eq:matrix_model}
e^{-{n\beta\over 2}\sum_{i=1}^nV(x_i)} \left(\prod_{1\leq i<j\leq n}|x_i-x_j|^\beta \right) \mathbbm 1_{x_1 <\ldots < x_n}\qquad\text{with}\qquad V(x)={|x|^p\over p}
\end{equation}
for some $p >0$. The associated empirical eigenvalue distribution will be denoted by $\rho_n^{(p)}$, that is,
$$
\rho_n^{(p)} := {1\over n}\sum_{i=1}^n\delta_{\lambda_i^{(n)}}.
$$
%
The following fact is a special case of \cite[Corollary 1.2]{HardyLDP} combined with the explicit formulas taken from \cite[page 364]{PS2011}. Alternatively, it can be deduced from the large deviation principle for $\rho_n^{(p)}$ stated in~\cite[Theorem 5.4.3]{hiai_petz} in conjunction with the characterization of the scaled Ullman distribution as the unique minimizer of the information function~\cite[Proposition 5.3.4]{hiai_petz}.

\begin{lemma}\label{lem:EmpiricalEVRMT}
Let $0<p<\infty$. Then, as $n\to\infty$, one has that, almost surely,
$$
\rho_n^{(p)} \stackrel{\text{w}}{\longrightarrow}\rho^{(p)},
$$
where the limit measure $\rho^{(p)}$ is deterministic and has the following rescaled Ullman density:
$$
g^{(p)}(x):={1\over b_p}h_p\Big({x\over b_p}\Big),\qquad |x|\leq b_p,
$$
with $h_p(x)$, $|x|\leq 1$, and the constant $b_p$ given by
$$
h_p(x):={p\over\pi}\int_{|x|}^1{t^{p-1}\over\sqrt{t^2-x^2}}\,\dint t\qquad\text{and}\qquad b_p:=\bigg(\frac{p \sqrt{\pi} \Gamma(\frac p2)}{\Gamma(\frac {p+1}2)}\bigg)^{1/p}.
$$
\end{lemma}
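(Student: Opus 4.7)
The strategy is to apply a large deviation principle (LDP) for the empirical measure of a $\beta$-ensemble with confining potential $V(x) = |x|^p/p$, and then to identify the unique minimizer of the associated rate function with a rescaling of the Ullman distribution. The almost sure weak convergence then follows from the usual Borel--Cantelli argument attached to such an LDP.

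Concretely, I would proceed as follows. First, note that~\eqref{eq:matrix_model} is a $\beta$-ensemble with continuous, strictly convex and superlinear potential $V(x) = |x|^p/p$, so the hypotheses of \cite[Theorem~5.4.3]{hiai_petz} (or equivalently \cite[Corollary~1.2]{HardyLDP}) are satisfied. This yields an LDP for $\rho_n^{(p)}$ at speed $n^2$ whose rate function on the space of Borel probability measures on $\R$ is, up to an additive constant,
\[
\mathscr I(\mu) = \frac{\beta}{2}\bigg[\int_{\R} V(x)\,\mu(\dint x) + \iint_{\R\times\R} \log\frac{1}{|x-y|}\,\mu(\dint x)\,\mu(\dint y)\bigg].
\]
Since the bracketed functional is strictly convex on the set of measures with finite logarithmic energy and finite $p$-th absolute moment, it admits a unique minimizer $\mu^\star$, and the LDP combined with Borel--Cantelli gives $\rho_n^{(p)} \stackrel{\text{w}}{\longrightarrow} \mu^\star$ almost surely.

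The second step is to identify $\mu^\star$. Using the scaling invariance of the logarithmic interaction, I would parametrize candidates by the ansatz $\mu_b$ with density $\frac{1}{b}h_p(x/b)$ for $b>0$, i.e.\ the pushforward of the Ullman distribution $\mathscr U(p)$ by $x\mapsto bx$. A direct computation using \eqref{eq:p-moment ullman random variable} and Lemma~\ref{lem:double integral ullman} gives
\[
\int_{\R}V\,\dint \mu_b = \frac{b^p}{p}\cdot\frac{\lambda_p}{2p}, \qquad
\iint \log\frac{1}{|x-y|}\,\mu_b(\dint x)\mu_b(\dint y) = -\log b + \log 2 + \frac{1}{2p}.
\]
Differentiating the sum in $b$ and setting the derivative equal to zero gives $b^p = 2p/\lambda_p$, hence
\[
b = \bigg(\frac{2p}{\lambda_p}\bigg)^{1/p} = \bigg(\frac{p\sqrt{\pi}\,\Gamma(\frac{p}{2})}{\Gamma(\frac{p+1}{2})}\bigg)^{1/p} = b_p,
\]
after inserting~\eqref{eq:lambda p}. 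To complete the identification, I would invoke the extremal characterization of the Ullman distribution on $[-1,1]$ with external field $Q_p$ (recalled after~\eqref{eq:lambda p}, from \cite[Theorem~5.1]{SaffBOOK} or \cite[Proposition~5.3.4]{hiai_petz}): after the change of variables $y = bx$, the minimization of $\mathscr I$ over probability measures of bounded support reduces, for the optimal $b=b_p$, exactly to the minimization of $\mathscr E_p$ on the set of probability measures on $[-1,1]$, whose unique minimizer is $\mathscr U(p)$. Hence $\mu^\star = \mu_{b_p}$ has density $g^{(p)}(x) = \frac{1}{b_p}h_p(x/b_p)$ on $[-b_p,b_p]$.

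The main obstacle I foresee is the careful bookkeeping between the three normalizations in play: the coefficients $\beta/2$ and $n\beta/2$ in~\eqref{eq:matrix_model} that fix the rate function, the coefficient $2/\lambda_p$ chosen in the definition of $\mathscr E_p$ so that the minimizer of the \emph{unrescaled} problem lives on $[-1,1]$, and the scaling $b_p$ that relates the two. All technical inputs (LDP, uniqueness via strict convexity, extremality of Ullman) are available off-the-shelf, so no new analysis is required beyond matching these constants.
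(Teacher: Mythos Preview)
Your proposal is correct and follows precisely the second route the paper itself cites for this lemma (the LDP from \cite[Theorem~5.4.3]{hiai_petz} together with the identification of the scaled Ullman distribution as the unique minimizer, \cite[Proposition~5.3.4]{hiai_petz}); the paper gives no argument beyond these citations, so your explicit computation of the optimal scale $b_p=(2p/\lambda_p)^{1/p}$ is exactly the bookkeeping that is implicit there. One small correction: for $0<p\leq 1$ the potential $V(x)=|x|^p/p$ is neither strictly convex nor superlinear, but the cited LDP results require only a confinement condition of the type $V(x)/\log|x|\to\infty$ as $|x|\to\infty$, which holds for every $p>0$, so your argument goes through unchanged once you replace that phrase.
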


\begin{rmk}
We notice that in the particular case $p=2$ we get back the Wigner semicircle distribution with density $g^{(2)}(x)={1\over2\pi}\sqrt{4-x^2}$, $|x|\leq 2$, mentioned at the beginning of this paragraph in connection with the Gaussian ensembles GOE, GUE and GSE.
\end{rmk}

\begin{cor}\label{cor:EmpiricalDistributionX}
Let $0<p<\infty$ and let $X=(X_1,\ldots,X_n)$ be a random vector with distribution as described in Corollary~\ref{cor:EigenvaluesProbabRep}. Then, as $n\to\infty$, one has that, almost surely,
\begin{equation}\label{eq:conv_empirical_to_zeta}
\frac{1}{n}
\sum_{i=1}^n \delta_{n^{-1/p} X_i} \overset{\text{w}}{\longrightarrow}\zeta^{(p)}
\end{equation}
with $\zeta^{(p)}$ being a measure with rescaled Ullman density of the form ${1\over c_{\beta,p}}h_p ({x\over c_{\beta,p}})$, $|x|<c_{\beta,p}$, where the exact value of the constant $c_{\beta,p}\in(0,\infty)$ is not important in what follows.
\end{cor}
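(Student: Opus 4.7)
The plan is to deduce the corollary from Lemma~\ref{lem:EmpiricalEVRMT} by a deterministic rescaling that brings the joint density of $X$ into the form of the $\beta$-ensemble~\eqref{eq:matrix_model}. The joint density of $X$ described in Corollary~\ref{cor:EigenvaluesProbabRep} is a symmetric function of its arguments, and since the empirical measure $\frac{1}{n}\sum_{i=1}^n \delta_{n^{-1/p} X_i}$ is invariant under permutations of the coordinates, I may equivalently work with the ordered version of the density (inserting $\mathbbm{1}_{x_1<\dots<x_n}$ and absorbing $n!$ into the normalization) without loss of generality.

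The key observation is that the density of $X$ and the density in~\eqref{eq:matrix_model} differ only in the prefactor of $\sum|x_i|^p$ in the exponential. Setting $\tilde X_i := \alpha_n X_i$ with $\alpha_n := (2p/(n\beta))^{1/p}$ and applying a routine change of variables, the joint density of $\tilde X = (\tilde X_1, \dots, \tilde X_n)$ becomes proportional to
$$
\exp\left(-\frac{n\beta}{2p}\sum_{i=1}^n|\tilde x_i|^p\right)\prod_{1\leq i<j\leq n}|\tilde x_i - \tilde x_j|^\beta,
$$
which is precisely the density~\eqref{eq:matrix_model} for the $\beta$-ensemble with potential $V(x)=|x|^p/p$. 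Lemma~\ref{lem:EmpiricalEVRMT} therefore yields the almost sure weak convergence $\frac{1}{n}\sum_{i=1}^n\delta_{\tilde X_i}\overset{\text{w}}{\longrightarrow}\rho^{(p)}$, where $\rho^{(p)}$ has the rescaled Ullman density $\frac{1}{b_p}h_p(x/b_p)$ supported on $[-b_p, b_p]$.

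Finally, a direct computation gives $n^{-1/p}X_i = (\beta/(2p))^{1/p}\tilde X_i$, so the empirical measure in~\eqref{eq:conv_empirical_to_zeta} is the pushforward of $\frac{1}{n}\sum_{i=1}^n\delta_{\tilde X_i}$ under the continuous map $x \mapsto (\beta/(2p))^{1/p} x$. Weak convergence is preserved under this pushforward, yielding the claim with $c_{\beta,p} := (\beta/(2p))^{1/p} b_p$. Since the deep input (the large deviation / empirical measure convergence for general $\beta$-ensembles) is borrowed wholesale from Lemma~\ref{lem:EmpiricalEVRMT} and only an elementary scaling argument remains, no real obstacle is expected; the only point requiring care is to verify that the scaling factor $\alpha_n$ correctly converts the prefactor $1$ in the exponential into $n\beta/(2p)$, after which everything follows.
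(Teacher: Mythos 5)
Your proposal is correct and follows essentially the same route as the paper's proof: rescale $X$ by $\big(\tfrac{n\beta}{2p}\big)^{-1/p}$ to match the $\beta$-ensemble density~\eqref{eq:matrix_model}, invoke Lemma~\ref{lem:EmpiricalEVRMT}, and undo the ($n$-independent part of the) rescaling via the continuous-mapping argument. The only difference is that you spell out in more detail the exchangeability/ordering bookkeeping and the algebra linking $n^{-1/p}X_i$ to $\tilde X_i$, which the paper states more tersely.
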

\begin{proof}
If $(X_1,\ldots,X_n)$ is as in Corollary~\ref{cor:EigenvaluesProbabRep}, then the joint density of $\big({n\beta\over 2p}\big)^{-1/p}(X_1,\ldots,X_n)$ is proportional to the expression given in~\eqref{eq:matrix_model} except that no indicator function is needed.
By Lemma \ref{lem:EmpiricalEVRMT}, with probability $1$ we have
$$
\frac{1}{n}
\sum_{i=1}^n \delta_{\big({n\beta\over 2p}\big)^{-1/p} X_i} \overset{\text{w}}{\longrightarrow}\rho^{(p)}.
$$
This differs from the claimed convergence~\eqref{eq:conv_empirical_to_zeta} just by a rescaling determined by $\beta$ and $p$.
\end{proof}

\subsection{A weak law of large numbers}\label{subsec:weak LLN}

We can now prove the following weak law of large numbers.

\begin{thm}\label{thm:wlln}
Let $0 < p ,q <\infty$ and $Z_n$ be uniformly distributed on $\B^n_{p,\beta}$, $\beta\in\{1,2,4\}$. Then, as $n\to\infty$,
\[
n^{1/p-1/q}\,\Big(\sum_{i=1}^n |\lambda_i(Z_n)|^q\Big)^{1/q}\stackrel{\Pro}{\longrightarrow}
C_{p,q}
=
\frac{\left(\frac{\Gamma(\frac{q+1}{2})}{2\sqrt{\pi}\Gamma(\frac{q+2}{2})}\right)^{1/q}}{\left(\frac{\Gamma(\frac{p+1}{2})}{2\sqrt{\pi}\Gamma(\frac{p+2}{2})}\right)^{1/p}}\,.
\]
\end{thm}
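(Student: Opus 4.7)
The plan is to combine the Schechtman--Zinn-type representation of Corollary~\ref{cor:EigenvaluesProbabRep} with the weak convergence of the empirical spectral measure from Corollary~\ref{cor:EmpiricalDistributionX}. Since $A\mapsto\sum_i|\lambda_i(A)|^q$ is a symmetric function of the eigenvalues, Corollary~\ref{cor:EigenvaluesProbabRep} yields the distributional identity
\[
\Big(\sum_{i=1}^n|\lambda_i(Z_n)|^q\Big)^{1/q}\stackrel{d}{=}U^{1/(n+m)}\,\frac{\|X\|_q}{\|X\|_p},\qquad m=\tfrac{\beta n(n-1)}{2},
\]
with $U\sim\mathrm{Unif}[0,1]$ independent of $X=(X_1,\dots,X_n)$. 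Introducing $Y_i:=n^{-1/p}X_i$, the explicit powers of $n$ cancel between numerator and denominator, and so
\[
n^{1/p-1/q}\Big(\sum_{i=1}^n|\lambda_i(Z_n)|^q\Big)^{1/q}\stackrel{d}{=}U^{1/(n+m)}\cdot\frac{\bigl(\int|y|^q\,\rho_n(\dint y)\bigr)^{1/q}}{\bigl(\int|y|^p\,\rho_n(\dint y)\bigr)^{1/p}},\qquad \rho_n:=\frac{1}{n}\sum_{i=1}^n\delta_{Y_i}.
\]

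The first factor $U^{1/(n+m)}$ tends to $1$ almost surely because $n+m\to\infty$ and $U>0$ almost surely. For the ratio, Corollary~\ref{cor:EmpiricalDistributionX} furnishes the weak almost-sure convergence $\rho_n\to\zeta^{(p)}$, where $\zeta^{(p)}$ is the rescaled Ullman distribution on $[-c_{\beta,p},c_{\beta,p}]$. Writing $V\sim\zeta^{(p)}$ as $c_{\beta,p}\,\mathbb U$ with $\mathbb U\sim\mathscr U(p)$, the unknown constant $c_{\beta,p}$ cancels in the ratio, leaving the target
\[
\frac{(\E|\mathbb U|^q)^{1/q}}{(\E|\mathbb U|^p)^{1/p}},
\]
which, after substitution of the explicit Ullman moments from~\eqref{eq:expectation ullman power p} and~\eqref{eq:expectation ullman power p_general}, yields the stated constant $C_{p,q}$. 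Combining these three convergences via Lemma~\ref{lem:ConvergenceProductsQuotients} and then invoking Lemma~\ref{lem:ConvergenceDistributionProbability} to pass from distributional convergence to convergence in probability finishes the argument.

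The main obstacle is that $y\mapsto|y|^r$ with $r\in\{p,q\}$ is \emph{unbounded}, so the weak convergence of $\rho_n$ to the compactly supported $\zeta^{(p)}$ does not \emph{a priori} yield convergence of the integrals $\int|y|^r\rho_n(\dint y)\to\int|y|^r\zeta^{(p)}(\dint y)$. I would handle this by a truncation plus tail control: for any fixed $M>c_{\beta,p}$, the bounded continuous function $\phi_M(y):=\min\{|y|^r,M^r\}$ satisfies $\int\phi_M\,\dint\rho_n\to\int\phi_M\,\dint\zeta^{(p)}=\int|y|^r\,\dint\zeta^{(p)}$ by the portmanteau theorem, since $\zeta^{(p)}$ is concentrated on $[-M,M]$. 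It then suffices to control the tail $\int_{|y|>M}|y|^r\,\rho_n(\dint y)$ in probability, and for this I would invoke the convergence of the extremal points of the polynomially confined $\beta$-ensemble to the edges of the support of the limiting equilibrium measure, which is a standard consequence of the large deviation principle underlying Lemma~\ref{lem:EmpiricalEVRMT} (see the references cited in Section~\ref{sec:RMT}). In particular, $\max_i|Y_i|\to c_{\beta,p}$ in probability, so for $M>c_{\beta,p}$ the tail integral vanishes with probability tending to one, closing the gap between weak and moment convergence and completing the proof.
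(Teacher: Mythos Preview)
Your overall architecture coincides with the paper's: the same distributional identity from Corollary~\ref{cor:EigenvaluesProbabRep}, the same rewriting in terms of the empirical measure $\rho_n=\frac1n\sum_i\delta_{n^{-1/p}X_i}$, the same reduction to showing $\int|y|^r\,\rho_n(\dint y)\to\int|y|^r\,\zeta^{(p)}(\dint y)$ for $r\in\{p,q\}$, and the same final packaging via Lemmas~\ref{lem:ConvergenceDistributionProbability} and~\ref{lem:ConvergenceProductsQuotients}. The only substantive difference is the tail control. The paper does \emph{not} use edge convergence; instead it quotes a pointwise exponential bound on the density of the \emph{mean} empirical measure $\E\rho_n$ outside a fixed compact (Theorem~11.1.2(i) in \cite{PS2011}), which gives $\E\int_{|y|>L}|y|^r\,\rho_n(\dint y)\to 0$ directly by dominated convergence. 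Your route via $\max_i|Y_i|\overset{\Pro}{\to}c_{\beta,p}$ also works, but be careful with the justification: convergence of the extreme eigenvalue is not an immediate corollary of the large deviation principle for the empirical \emph{measure} (that LDP lives at speed $n^2$ and does not see single outliers); one needs either the separate LDP for the largest eigenvalue at speed $n$ or a concentration/variational argument specific to $\beta$-ensembles. Both are available in the literature for polynomial potentials, so your argument can be completed, but the paper's intensity-measure bound is the cleaner, more self-contained device here.
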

\begin{proof}
Let $U$ be uniformly distributed on $[0,1]$ and $X$ be a random vector with density as described in Corollary~\ref{cor:EigenvaluesProbabRep}. We assume that $U$ and $X$ are independent. By Corollary~\ref{cor:EigenvaluesProbabRep} we have that
$$
\sum_{i=1}^n |\lambda_i(Z_n)|^q \overset{\text{d}}{=}U^{q\over n+m}{\sum_{i=1}^n|X_i|^q\over\big(\sum_{i=1}^n|X_i|^p\big)^{q/p}}.
$$
Thus,
\begin{align*}
\Big(\sum_{i=1}^n |\lambda_i(Z_n)|^q\Big)^{1/q} &\overset{\text{d}}{=}U^{1\over n+m}{\big(\sum_{i=1}^n|X_i|^q\big)^{1/q}\over\big(\sum_{i=1}^n|X_i|^p\big)^{1/p}} \\
&= U^{1\over n+m}{\big({1\over n}\sum_{i=1}^n(n^{-1/p}|X_i|)^q\big)^{1/q}\over\big({1\over n}\sum_{i=1}^n(n^{-1/p}|X_i|)^p\big)^{1/p}}\,n^{{1\over q}-{1\over p}}.
\end{align*}
Defining the random probability measure
$$
\xi_n:={1\over n}\sum_{i=1}^n\delta_{n^{-1/p}X_i},
$$
on the Borel sets of $\R$, we obtain
\[
\Big(\sum_{i=1}^n |\lambda_i(Z_n)|^q\Big)^{1/q} \overset{\text{d}}{=} U^{1\over n+m}{\big(\int_\R |x|^q\,\xi_n(\dint x)\big)^{1/q}\over\big(\int_\R |x|^p\,\xi_n(\dint x)\big)^{1/p}}\,n^{{1\over q}-{1\over p}}.
\]
With the help of  Corollary \ref{cor:EmpiricalDistributionX} we are going to prove that, as $n\to\infty$,
\begin{equation}\label{eq:WeakConvergencePP}
Y_n:=\int_\R |x|^r\,\xi_n(\dint x)\overset{\text{d}}{\longrightarrow}\int_\R |x|^r\,\zeta^{(p)}(\dint x)=:Y
\end{equation}
for any $r>0$. Indeed, from \cite[Theorem 11.1.2 (i)]{PS2011} we know that there exists a constant $K_{\beta,p}\in(0,\infty)$ such that the intensity measure $\E\xi_n$ of $\xi_n$ has a Lebesgue density on $\R$, which is bounded by $e^{-C_{\beta,p}n|x|^p}$ whenever $|x|>K_{\beta,p}$, where $C_{\beta,p}\in(0,\infty)$ is another constant. Let us define $L=L_{\beta,p}:=\max\{K_{\beta,p},c_{\beta,p}\}$ with the constant $c_{\beta,p}$ as in Corollary \ref{cor:EmpiricalDistributionX} (recall that the interval $(-c_{\beta,p},c_{\beta,p})$ is the support of $\zeta^{(p)}$) as well as the random variables
\[
Y_n^{(1)} := \int_{-L}^L |x|^r\,\xi_n(\dint x)\qquad\text{and}\qquad Y_n^{(2)}:=\int_{\R\setminus(-L,L)} |x|^r\,\xi_n(\dint x).
\]
Then we conclude $Y_n^{(1)}\overset{\text{d}}{\longrightarrow}Y$ from Corollary \ref{cor:EmpiricalDistributionX}, since the function $x\mapsto|x|^r$ is bounded on $(-L,L)$. On the other hand, $x\mapsto |x|^r$ is non-negative and measurable, and therefore it follows from the previously mentioned exponential upper bound and the dominated convergence theorem that $Y_n^{(2)}\overset{\text{d}}{\longrightarrow}0$. In fact,
\[
\E Y_n^{(2)} = \int_{\R\setminus(-L,L)} |x|^r\,\E\xi_n(\dint x)  \leq \int_{\R\setminus(-L,L)}|x|^re^{-C_{\beta,p}n|x|^p}\,\dint x \leq 2\int_0^\infty x^re^{-C_{\beta,p}nx^p}\,\dint x\longrightarrow 0,
\]
as $n\to\infty$, by the dominated convergence theorem (take $n=1$ to get an integrable majorant). As a consequence, the pair $(Y_n^{(1)},Y_n^{(2)})$ converges in distribution, as $n\to\infty$, to the pair $(Y,0)$ and the continuous mapping theorem \cite[Lemma 3.3]{Kallenberg} then yields that $Y_n=Y_n^{(1)}+Y_n^{(2)}\overset{\text{d}}{\longrightarrow}Y$, as $n\to\infty$. This proves \eqref{eq:WeakConvergencePP}.

In combination with the continuous mapping theorem \cite[Lemma 3.3]{Kallenberg}, we get from \eqref{eq:WeakConvergencePP} that
$$
\Big(\int_\R |x|^q\,\xi_n(\dint x)\Big)^{1/q}\overset{\text{d}}{\longrightarrow}\Big(\int_\R |x|^q\,\zeta^{(p)}(\dint x)\Big)^{1/q}
$$
and
$$
\Big(\int_\R |x|^p\,\xi_n(\dint x)\Big)^{1/p}\overset{\text{d}}{\longrightarrow}\Big(\int_\R |x|^p\,\zeta^{(p)}(\dint x)\Big)^{1/p},
$$
as $n\to\infty$. Since both limiting random variables are constant, this convergence also holds in probability according to Lemma \ref{lem:ConvergenceDistributionProbability}, that is,
$$
\Big(\int_\R |x|^q\,\xi_n(\dint x)\Big)^{1/q}\overset{\Pro}{\longrightarrow}\Big(\int_\R |x|^q\,\zeta^{(p)}(\dint x)\Big)^{1/q}
$$
and
$$
\Big(\int_\R |x|^p\,\xi_n(\dint x)\Big)^{1/p}\overset{\Pro}{\longrightarrow}\Big(\int_\R |x|^p\,\zeta^{(p)}(\dint x)\Big)^{1/p},
$$
as $n\to\infty$. Moreover, $\Big(\int_\R |x|^p\,\zeta^{(p)}(\dint x)\Big)^{1/p}\neq 0$ and thus Lemma \ref{lem:ConvergenceProductsQuotients} (ii) implies that, as $n\to\infty$,
$$
{\big(\int_\R |x|^q\,\xi_n(\dint x)\big)^{1/q}\over\big(\int_\R |x|^p\,\xi_n(\dint x)\big)^{1/p}} \overset{\Pro}{\longrightarrow}{\big(\int_\R |x|^q\,\zeta^{(p)}(\dint x)\big)^{1/q}\over\big(\int_\R |x|^p\,\zeta^{(p)}(\dint x)\big)^{1/p}}.
$$
Finally, we notice that $U^{1\over n+m}$ converges in probability to $1$, as $n\to\infty$. In view of Lemma \ref{lem:ConvergenceProductsQuotients} (i) this proves that
$$
n^{1/p-1/q}\,\Big(\sum_{i=1}^n |\lambda_i(Z)|^q\Big)^{1/q}\stackrel{\Pro}{\longrightarrow} {\big(\int_\R |x|^q\,\zeta^{(p)}(\dint x)\big)^{1/q}\over\big(\int_\R |x|^p\,\zeta^{(p)}(\dint x)\big)^{1/p}}=:C_{p,q},
$$
as $n\to\infty$. To compute $C_{p,q}$ explicitly, observe that the probability measure $\zeta^{(p)}$ appearing in Corollary~\ref{cor:EmpiricalDistributionX} is the probability distribution of $c_{\beta,p} \mathbb{U}$, where $\mathbb {U}\sim \mathscr U(p)$ is an Ullman random variable. Hence,
\begin{equation}\label{eq:CpqExplicit}
C_{p,q}
=
{\Big(\int_\R |x|^q\,\zeta^{(p)}(\dint x)\Big)^{1/q}\over \Big(\int_\R |x|^p\,\zeta^{(p)}(\dint x)\Big)^{1/p}}
=
\frac{(\E |c_{\beta,p}\mathbb U|^q)^{1/q}}{(\E|c_{\beta,p} \mathbb U|^p)^{1/p}}
=
\frac{(\E |\mathbb U|^q)^{1/q}}{(\E|\mathbb U|^p)^{1/p}}
=
\frac
{\left(\frac{p\Gamma(\frac{q+1}{2})}{(p+q)\sqrt{\pi}\Gamma(\frac{q+2}{2})}\right)^{1/q}}
{\left(\frac{\Gamma(\frac{p+1}{2})}{2\sqrt{\pi}\Gamma(\frac{p+2}{2})}\right)^{1/p}},
\end{equation}
where in the last step we used the formula for the moments of $|\mathbb U|$ stated in~\eqref{eq:expectation ullman power p} and~\eqref{eq:expectation ullman power p_general}.
\end{proof}

\section{Application to high-dimensional intersections}\label{sec:Application}

We prove the result on the intersection of high-dimensional matrix balls in the spirit of Schechtman and Schmuckenschl\"ager that was discussed in the introduction. For $0<p\leq\infty$ and $\beta\in\{1,2,4\}$ we write $\D^n_{p,\beta}$ for the volume normalized matrix balls, that is,
\begin{align*}
\D^n_{p,\beta} &=  \vol_{\beta,n}(\B^n_{p,\beta})^{-2/(n(n-1)\beta + 2n)}\B^n_{p,\beta},
\end{align*}
where we used that the dimension of $\mathscr H_n(\mathbb F_\beta)$ over $\R$ is ${n(n-1)\beta\over 2} + n$.
Having in mind Theorem \ref{thm:VolumeAsymptotics}, we define  $a_{p}(\beta)$ (and similarly $a_{q}(\beta)$) by
$$
a_{p}(\beta) :=
\Delta^\beta(p)\left({4\pi\over \beta}\right)^{\beta/2}e^{3\beta/4}, \quad  0<p\leq\infty.
$$
We shall also need the constants
\begin{equation}\label{eq:Apqbeta}
a_{p,q}:=\bigg(\frac{a_{q}(\beta)}{a_{p}(\beta)}\bigg)^{1/\beta} = \frac{\Delta(q)}{\Delta(p)}
=
\frac{\bigg(\frac{q\sqrt{\pi}\,\Gamma(\frac{q}{2})}{\sqrt{e}\,\Gamma(\frac{q+1}{2})}\bigg)^{1/q}}{\bigg(\frac{p\sqrt{\pi}\,\Gamma(\frac{p}{2})}{\sqrt{e}\,\Gamma(\frac{p+1}{2})}\bigg)^{1/p}}\,.
\end{equation}
We are now able to prove Theorem \ref{thm:ApplInto} discussed in the introduction.
\begin{thm}\label{thm:main_repeat}
Let $0 <  p, q <\infty$ with $p\neq q$ and let $\beta\in\{1,2,4\}$. Then, for $t>0$,
\[
\vol_{\beta, n}(\D^n_{p,\beta}\cap t\, \D^n_{q,\beta}) \stackrel{n\to\infty}{\longrightarrow}
\begin{cases}
0 &: t < e^{\frac{1}{2p} - \frac{1}{2q}} \left(\frac{2p}{p+q}\right)^{1/q} \\
1 &: t > e^{\frac{1}{2p} - \frac{1}{2q}} \left(\frac{2p}{p+q}\right)^{1/q}\,.
\end{cases}
\]
\end{thm}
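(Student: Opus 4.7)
The strategy is to rewrite the volume of the intersection as a probability on the unnormalized ball $\B^n_{p,\beta}$, pull it back through the scaling factor relating $\D^n_{p,\beta}$ and $\B^n_{p,\beta}$, and then combine the asymptotic volume formula of Theorem~\ref{thm:VolumeAsymptotics} with the weak law of large numbers of Theorem~\ref{thm:wlln}. The real dimension of $\mathscr H_n(\mathbb F_\beta)$ is $d_n:=n(n-1)\beta/2+n$, so set $r_{p,\beta,n}:=\vol_{\beta,n}(\B^n_{p,\beta})^{-1/d_n}$, giving $\D^n_{p,\beta}=r_{p,\beta,n}\B^n_{p,\beta}$ and $\vol_{\beta,n}(\D^n_{p,\beta})=1$. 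Thus, if $Z_n$ is uniform in $\B^n_{p,\beta}$,
\[
\vol_{\beta,n}(\D^n_{p,\beta}\cap t\D^n_{q,\beta}) \;=\; \Pro\!\left(r_{p,\beta,n}Z_n\in t r_{q,\beta,n}\B^n_{q,\beta}\right) \;=\; \Pro\!\left(\|\lambda(Z_n)\|_q \leq t\,\frac{r_{q,\beta,n}}{r_{p,\beta,n}}\right).
\]

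\textbf{Computing the scaling ratio.} Theorem~\ref{thm:VolumeAsymptotics} gives $\vol_{\beta,n}(\B^n_{p,\beta})^{2/n^2}\sim n^{-\beta(1/p+1/2)} a_p(\beta)$. Since $n^2/(2d_n)\to 1/\beta$ and the factors $n^{O(1/n)}$ arising from the $(1+o(1))$ correction converge to $1$, a direct computation (taking logarithms) yields
\[
\frac{r_{q,\beta,n}}{r_{p,\beta,n}} = \left(\frac{\vol_{\beta,n}(\B^n_{p,\beta})}{\vol_{\beta,n}(\B^n_{q,\beta})}\right)^{1/d_n} \sim n^{\,1/q-1/p}\left(\frac{a_p(\beta)}{a_q(\beta)}\right)^{1/\beta} = \frac{n^{\,1/q-1/p}}{a_{p,q}},
\]
where $a_{p,q}=\Delta(q)/\Delta(p)$ as in~\eqref{eq:Apqbeta}. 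Substituting into the condition of the previous display, the event becomes
\[
n^{\,1/p-1/q}\|\lambda(Z_n)\|_q \;\leq\; \frac{t}{a_{p,q}}\,(1+o(1)).
\]

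\textbf{Applying the weak LLN and identifying the threshold.} By Theorem~\ref{thm:wlln}, the left-hand side converges in probability to the positive constant $C_{p,q}=(\E|\mathbb U|^q)^{1/q}/(\E|\mathbb U|^p)^{1/p}$ with $\mathbb U\sim\mathscr U(p)$. Therefore the probability tends to $1$ when $t>a_{p,q}C_{p,q}$ and to $0$ when $t<a_{p,q}C_{p,q}$. It remains to check that $a_{p,q}C_{p,q}$ equals $e^{1/(2p)-1/(2q)}(2p/(p+q))^{1/q}$. Using $\Gamma(p/2)=2\Gamma((p+2)/2)/p$ in the explicit expression for $\Delta(p)$ from Theorem~\ref{thm:VolumeAsymptotics} and recalling~\eqref{eq:expectation ullman power p}, one obtains the compact form $\Delta(p)=\tfrac{1}{2}e^{-1/(2p)}(\E|\mathbb U|^p)^{-1/p}$, and likewise $\Delta(q)=\tfrac{1}{2}e^{-1/(2q)}(\E|\mathbb V|^q)^{-1/q}$ with $\mathbb V\sim\mathscr U(q)$. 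Hence
\[
a_{p,q}C_{p,q} \;=\; e^{1/(2p)-1/(2q)} \left(\frac{\E|\mathbb U|^q}{\E|\mathbb V|^q}\right)^{1/q},
\]
and~\eqref{eq:expectation ullman power p_general} together with~\eqref{eq:expectation ullman power p} gives $\E|\mathbb U|^q/\E|\mathbb V|^q = (p/(p+q))/(1/2) = 2p/(p+q)$, which yields the asserted threshold.

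\textbf{Anticipated difficulty.} The two analytic ingredients do all the heavy lifting, so the main obstacle is bookkeeping: one must verify that the sub-exponential correction $n^{O(1/n)}$ in Step~2 does not perturb the threshold (harmless because it multiplies by $1+o(1)$), and that the two Ullman-moment identifications in Step~4 align correctly — in particular, distinguishing the $q$-th moment of an Ullman variable of parameter $p$ (from $C_{p,q}$) from the $q$-th moment of an Ullman variable of parameter $q$ (from $\Delta(q)$). Once this is sorted, the conclusion is a direct application of the two main results of the preceding sections.
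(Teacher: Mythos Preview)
Your proposal is correct and follows essentially the same approach as the paper: rewrite the intersection volume as the probability that a uniform point $Z_n$ in $\B^n_{p,\beta}$ lies in a suitably scaled copy of $\B^n_{q,\beta}$, then invoke Theorem~\ref{thm:VolumeAsymptotics} for the scaling ratio and Theorem~\ref{thm:wlln} for the limit probability, and finally verify $a_{p,q}C_{p,q}=e^{1/(2p)-1/(2q)}(2p/(p+q))^{1/q}$. The only cosmetic difference is that the paper introduces an auxiliary sequence $(t_n)$ chosen so that the scaling matches $t$ \emph{exactly} and then checks $t_n\to t$, whereas you absorb the same asymptotics into a $(1+o(1))$ factor on the right-hand side of the event; both treatments are equivalent and your identification of the threshold via the compact form $\Delta(p)=\tfrac12 e^{-1/(2p)}(\E|\mathbb U|^p)^{-1/p}$ is a clean way to finish.
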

\begin{proof}
Let $Z_n$ be uniformly distributed on $\B^n_{p,\beta}$ and $X$ be a random vector distributed as in Corollary~\ref{cor:EigenvaluesProbabRep}. From Corollary~\ref{cor:EigenvaluesProbabRep} it follows that
\[
\sum_{i=1}^n|\lambda_i(Z_n)|^q\stackrel{\text{d}}{=} \bigg(U^{\frac{1}{n+m}}\frac{\|X\|_q}{\|X\|_p}\bigg)^{q}.
\]
Consequently, for any sequence $(t_n)_{n\in\N}$ converging to $t>0$, as $n\to\infty$, we have
\begin{align*}
\Pro\Big(Z_n\in t_n a_{p,q}^{-1}n^{1/q-1/p}\, \B^n_{q,\beta}\Big) &= \Pro\Big(n^{1/p-1/q}\,\Big(\sum_{i=1}^n|\lambda_i(Z_n)|^q\Big)^{1/q}  - C_{p,q} \leq a_{p,q}^{-1}\,t_n - C_{p,q}\Big)\\
&  \stackrel{n\to\infty}{\longrightarrow} \begin{cases}
0 &: t < C_{p,q}a_{p,q}\\
1 &: t > C_{p,q}a_{p,q}\,,
\end{cases}
\end{align*}
where we used the weak law of large numbers in Theorem \ref{thm:wlln}. Note that by~\eqref{eq:CpqExplicit} and~\eqref{eq:Apqbeta},
$$
C_{p,q}a_{p,q} = e^{\frac{1}{2p} - \frac{1}{2q}} \left(\frac{2p}{p+q}\right)^{\frac{1}{q}}
$$
is exactly the critical value appearing in the statement of Theorem~\ref{thm:main_repeat}.
On the other hand,
\begin{align*}
\Pro\Big(Z_n\in t_n a_{p,q}^{-1}n^{1/q-1/p}\, \B^n_{q,\beta}\Big)
& \quad = \frac{\vol_{\beta,n}\Big(\{z\in \B^n_{p,\beta}\,:\, \|z\|_q \leq t_n a_{p,q}^{-1}n^{1/q-1/p}  \} \Big)}{\vol_{\beta, n}(\B^n_{p,\beta})} \\
&\quad = \vol_{\beta,n}\bigg(\bigg\{z\in \D^n_{p,\beta}\,:\, \|z\|_q \leq t_n\frac{ a_{p,q}^{-1}n^{1/q-1/p}}{\vol_{\beta, n}(\B^n_{p,\beta})^{2/(n(n-1)\beta + 2n)}}  \bigg\} \bigg)\\
& \quad = \vol_{\beta,n}\bigg(\D^n_{p,\beta} \cap t_n\frac{ a_{p,q}^{-1}n^{1/q-1/p}\vol_{\beta, n}(\B^n_{q,\beta})^{2/(n(n-1)\beta + 2n)}}{\vol_{\beta n^2}(\B^n_{p,\beta})^{2/(n(n-1)\beta + 2n)}}\D^n_{q,\beta} \bigg).
\end{align*}
We now take a sequence $(t_n)_{n\in\N}$ such that
$$
t_n\frac{ a_{p,q}^{-1}n^{1/q-1/p}\vol_{\beta, n}(\B^n_{q,\beta})^{2/(n(n-1)\beta + 2n)}}{\vol_{\beta n^2}(\B^n_{p,\beta})^{2/(n(n-1)\beta + 2n)}} = t.
$$
To complete the proof, we need to show that $\lim_{n\to\infty} t_n =t$. But from Theorem \ref{thm:VolumeAsymptotics} we deduce that
\[
\frac{\vol_{\beta,n}(\B^n_{q,\beta})^{\frac 2 {n(n-1)\beta + 2n}}}{\vol_{\beta,n}(\B^n_{p,\beta})^{\frac 2{n(n-1)\beta + 2n}}}
=
\left(\frac{\vol_{\beta,n}(\B^n_{q,\beta})^{2/\beta n^2}}{\vol_{\beta,n}(\B^n_{p,\beta})^{2/\beta n^2}}\right)^{\frac{\beta n^2}{n(n-1)\beta + 2n}}
 \asymptequiv \frac{a_{q}(\beta)^{1/\beta}}{a_{p}(\beta)^{1/\beta}} n^{1/p-1/q} = a_{p,q} n^{1/p-1/q},
\]
where we used that $n^{\frac{\beta n^2}{n(n-1)\beta + 2n}}\sim n$.  The proof is complete.
\end{proof}

\subsection*{Acknowledgement}

JP has been supported by a \textit{Visiting International Professor (VIP) Fellowship} from the Ruhr University Bochum. ZK and CT were supported by the DFG Scientific Network \textit{Cumulants, Concentration and Superconcentration}.

\bibliographystyle{plain}
\bibliography{ensembles}

\end{document}